\newtheorem{thm}{Theorem}[section]
 \newtheorem{cor}{Corollary}[section]
 \newtheorem{lem}{Lemma}[section]
 \newtheorem{prop}{Proposition}[section]
 \newtheorem{defn}{Definition}[section]
\newtheorem{rem}{Remark}[section]
\begin{document}
\begin{center}
{\large{\bf Global existence and optimal decay rates for the Timoshenko system: the case of equal wave speeds}}
\end{center}
\begin{center}
\footnotesize{Naofumi Mori}\\[2ex]
\footnotesize{Graduate School of Mathematics,\\ Kyushu University, Fukuoka 819-0395, Japan}\\
\footnotesize{n-mori@math.kyushu-u.ac.jp}\\

\vspace{3mm}

\footnotesize{Jiang Xu}\\[2ex]
\footnotesize{Department of Mathematics, \\ Nanjing
University of Aeronautics and Astronautics, \\
Nanjing 211106, P.R.China,}\\
\footnotesize{jiangxu\underline{ }79@nuaa.edu.cn}\\

\vspace{3mm}

\footnotesize{Faculty of Mathematics, \\ Kyushu University, Fukuoka 819-0395, Japan}\\

\vspace{5mm}

\footnotesize{Shuichi Kawashima}\\[2ex]
\footnotesize{Faculty of Mathematics, \\ Kyushu University, Fukuoka 819-0395, Japan,}\\
\footnotesize{kawashim@math.kyushu-u.ac.jp}\\
\end{center}
\vspace{6mm}

\begin{abstract}
This work first gives the global existence and optimal decay rates of solutions to the classical Timoshenko system on the framework of Besov spaces.
Due to the \textit{non-symmetric} dissipation, the general theory for dissipative hyperbolic systems (\cite{XK1})
can not be applied to the Timoshenko system directly.
In the case of equal wave speeds, we construct global solutions to the Cauchy problem pertaining to data
in the spatially Besov spaces. Furthermore, the dissipative structure
enables us to give a new decay framework which pays less attention on the traditional spectral analysis. Consequently, the optimal decay estimates
of solution and its derivatives of fractional order are shown
by time-weighted energy approaches in terms of low-frequency and high-frequency decompositions. As a by-product,
the usual decay estimate of $L^{1}(\mathbb{R})$-$L^{2}(\mathbb{R})$ type is also shown.
\end{abstract}

\noindent\textbf{AMS subject classification.} 35L45;\ 35B40;\ 74F05\\
\textbf{Key words and phrases.} Global existence; optimal decay estimates; critical Besov spaces; Timoshenko system

\section{Introduction}
Consider the following Timoshenko system (see \cite{T1,T2}), which is a set of two coupled wave equations of the form
\begin{align}
\left\{\begin{array}{l}
       \varphi_{tt}-(\varphi_x-\psi)_x=0,\\[2mm]
       \psi_{tt}-\sigma(\psi_{x})_{x}-(\varphi_x-\psi)+\gamma \psi_t =0,
       \end{array}\right. \label{R-E1}
\end{align}
and describes the transverse vibrations of a beam. Here $t\geq 0$ is the time variable, $x\in \mathbb{R}$ is the spacial
variable which denotes the point on the center line of the beam,
$\varphi(t,x)$ is the transversal displacement of the beam from an equilibrium state, and $\psi$ is the rotation
angle of the filament of the beam. The smooth function $\sigma(\eta)$ satisfies $\sigma'(\eta)>0$ for any $\eta\in\mathbb{R}$, and $\gamma$
is a positive constant.
System \eqref{R-E1} is supplemented
with the initial data
\begin{equation}
(\varphi, \varphi_{t}, \psi, \psi_{t})(x,0)
=(\varphi_{0}, \varphi_{1}, \psi_{0}, \psi_{1})(x).\label{R-E2}
\end{equation}
The linearized system  of \eqref{R-E1} reads correspondingly as
\begin{align}
\left\{\begin{array}{l}
       \varphi_{tt}-(\varphi_x-\psi)_x=0,\\[2mm]
       \psi_{tt}-a^2\psi_{xx}-(\varphi_x-\psi)+\gamma \psi_t =0,
       \end{array}\right. \label{R-E3}
\end{align}
with $a>0$ is the sound speed defined by $a^2=\sigma'(0)$. The case $a=1$ corresponds to the Timoshenko system with equal wave speeds.

\subsection{Known results}
In a bounded domain, the decay property of \eqref{R-E3} was studied by Rivera and Rake \cite{RR1,RR2}, where they proved
that the energy decayed exponentially as $t\rightarrow0$ if $a=1$ and the one decayed polynomially as $t\rightarrow0$ if $a\neq1$.
In whole space, the third author \textit{et. al.} \cite{IHK} introduced
the following quantities
\begin{align}\label{R-E4}
v=\varphi_x-\psi, \quad
u=\varphi_t, \quad
z=a\psi_x, \quad
y=\psi_t,
\end{align}
so that the system \eqref{R-E3} can be rewritten as
\begin{align}\label{R-E5}
\left\{\begin{array}{l}
        v_t-u_x+y=0,\\[2mm]
        u_t-v_x=0,\\[2mm]
        z_t-ay_x=0,\\[2mm]
         y_t-az_x-v+\gamma y=0.
\end{array}\right.
\end{align}
The initial data are given by
\begin{align}\label{R-E6}
(v, u, z, y)(x, 0)
=(v_{0}, u_{0}, z_{0}, y_{0})(x),
\end{align}
where $v_0=\varphi_{0,x}-\psi_{0}$, $y_0=\psi_1$,
$u_0=\varphi_1$ and $z_0=a\psi_{0,x}$.
Furthermore, it was shown by \cite{IHK} that the dissipative structure of \eqref{R-E5}
is characterized by
\begin{equation}
\begin{cases}
{\rm Re}\,\lambda(i\xi)\leq -c\eta_1(\xi)
   \qquad {\rm for} \quad a=1, \\[1mm]
{\rm Re}\,\lambda(i\xi)\leq -c\eta_2(\xi)
   \qquad {\rm for} \quad a\neq 1,
\end{cases}\label{R-E7}
\end{equation}
where $\lambda(i\xi)$ denotes the eigenvalues of the system
\eqref{R-E5} in the Fourier space, $\eta_1(\xi)=\frac{\xi^2}{1+\xi^2}$,
$\eta_2(\xi)=\frac{\xi^2}{(1+\xi^2)^2}$, and $c$ is a positive
constant. As the consequence, the following decay properties are shown
for $U=(v,u,z,y)^{\top}$ of \eqref{R-E5}:
\begin{align}\label{R-E8}
\|\partial_x^k U(t)\|_{L^2}
\leq
C(1+t)^{-\frac{1}{4}-\frac{k}{2}}\|U_0\|_{L^1}
+Ce^{-ct}\|\partial_x^kU_0\|_{L^2}
\end{align}
for $a=1$, and
\begin{align}\label{R-E9}
\|\partial_x^k U(t)\|_{L^2}
\leq
C(1+t)^{-\frac{1}{4}-\frac{k}{2}}\|U_0\|_{L^1}
+C(1+t)^{-\frac{l}{2}}\|\partial_x^{k+l}U_0\|_{L^2}
\end{align}
for $a\neq 1$, where $U_0:=(v_0,z_0,u_0,y_0)$, $k$ and $l$ are nonnegative integers, and $c$ and $C$ are positive constants.
However, the energy functionals in \cite{IHK} are not optimal. Recently, by a careful analysis for asymptotic expansions of the eigenvalues,
the first author and third author \cite{MK} gave the optimal energy method in Fourier spaces, which is regarded as an improved version
of that in \cite{IHK}. Recently, with the additional assumption $\int_{\mathbb{R}}U_{0}dx=0$, Racke and Said-Houari \cite{RS} strengthened those decay properties in \cite{IHK} such that linearized solutions
decay faster with a rate of $t^{-\gamma/2}$, by introducing the integral space $L^{1,\gamma}(\mathbb{R})$.

Other studies on the dissipative Timoshenko system can be found in the literature.
We refer to \cite{RBS,RFSC} for frictional dissipation case, \cite{FR,SAJM,SK} for thermal dissipation case, and \cite{ABMR,ARMSV,LK,LP} for memory-type dissipation case.

\subsection{Main results}
The main aim of this paper is to establish the global existence and optimal decay estimates of
solutions in spatially critical Besov spaces. To the best of our knowledge,
so far there is no results available in this direction for the Timoshenko system, although
the critical space has already been succeeded in the study of fluid dynamical equations, see \cite{AGZ,D1,H,PZ} for Navier-Stokes equations,
\cite{D2,XW,XY} for Euler equations and related models.
In \cite{XK1}, with the assumptions of dissipative entropy and Shizuta-Kawashima condition, the second and third authors
have already studied generally dissipative hyperbolic systems where the dissipation matrix is symmetric, however, the Timoshenko system has the non-symmetric dissipation.
More precisely, with the aid of variable change \eqref{R-E4} (with $a=1$), it is convenient to rewrite \eqref{R-E1}-\eqref{R-E2} as a Cauchy problem for
the hyperbolic system of first order
\begin{align}\label{R-E10}
\left\{\begin{array}{l}
        U_t + A(U)U_x + LU =0,\\[2mm]
        U(x, 0) = U_0(x),
\end{array}\right.
\end{align}
where
\begin{align}\label{R-E11}
A(U)=-\left(
\begin{array}{cccc}
0 & 1 & 0 & 0 \\
1 & 0 & 0 & 0 \\
0 & 0 & 0 & 1 \\
0 & 0 & \sigma'(z) & 0
\end{array}
\right),\ \ \
L=\left(
\begin{array}{cccc}
0 & 0 & 0 & 1 \\
0 & 0 & 0 & 0 \\
0 & 0 & 0 & 0 \\
-1 & 0 & 0 & \gamma
\end{array}
\right).
\end{align}

Notice that $A(U)$ is a real symmetrizable matrix due to $\sigma'(z)>0$, and the matrix $L$ is nonnegative definite but not symmetric,
so the Timoshenko system \eqref{R-E10} is an example of hyperbolic systems
with non-symmetric dissipation. Consequently, those general results (see \cite{XK1}) for hyperbolic systems with symmetric dissipation
can not be applied directly, which is the main motivation of this paper.

The partial damping term $\gamma y$ is a weak dissipation,
which enables us to capture the dissipation from contributions of
$(y,v,u_{x},z_{x})$ only. However, there is no dissipative rates for $u,z$ themselves.
To overcome the difficulty in the derivation of a priori estimates, an elementary
fact well developed (see \cite{XK1}) that indicates the relation between homogeneous and inhomogeneous Chemin-Lerner
spaces, will be used, see proofs of Lemmas \ref{lem3.1}-\ref{lem3.4} for more details.
On the other hand, the second and third authors gave a new decay
framework for general dissipative system satisfying the Shizuta-Kawashima condition (see \cite{XK2}),
which allows to pay less attention on the traditional spectral analysis. Inspired by the dissipative
structure for the Timoshenko system (see \cite{IHK} or \eqref{R-E7}), we hope that the new decay framework can be adapted to
the Timoshenko system with equal wave speeds. However, those analysis remain valid only for the case of high dimension $(n\geq3)$ due to
interpolation techniques. To overcome this obstruction, the degenerate space $\dot{B}^{-1/2}_{2,\infty}$ rather than the general form $\dot{B}^{-s}_{2,\infty}(0<s\leq1/2)$ will be employed. Let us mention that $L^{1}(\mathbb{R})\hookrightarrow\dot{B}^{0}_{1,\infty}(\mathbb{R})\hookrightarrow\dot{B}^{-1/2}_{2,\infty}(\mathbb{R})$.
Additionally, we involve new observations in order to achieve the optimal decay estimates at the low-frequency, see \eqref{R-E78} and \eqref{R-E81} for details.

In the present paper, we focus on the Timoshenko system with equal wave speeds ($a=1$).
Now, main results are stated as follows.
\begin{thm}\label{thm1.1}
Suppose that $U_{0}\in B^{3/2}_{2,1}(\mathbb{R})$. There exists a positive constant $\delta_{0}$ such that if
$$\|U_{0}\|_{B^{3/2}_{2,1}(\mathbb{R})}\leq
\delta_{0}, $$
then the Cauchy problem \eqref{R-E10} has a unique
global classical solution $U\in \mathcal{C}^{1}(\mathbb{R}^{+}\times
\mathbb{R})$ satisfying
$$U \in\widetilde{\mathcal{C}}(B^{3/2}_{2,1}(\mathbb{R}))\cap\widetilde{\mathcal{C}}^{1}(B^{1/2}_{2,1}(\mathbb{R}))$$
Moreover, the following energy inequality holds that
\begin{eqnarray*}
&&\|U\|_{\widetilde{L}^\infty(B^{3/2}_{2,1}(\mathbb{R}))}+\Big(\|y\|_{\widetilde{L}^2_{T}(B^{3/2}_{2,1})}+\|(v,z_{x})\|_{\widetilde{L}^2_{T}(B^{1/2}_{2,1})}
+\|u_{x}\|_{\widetilde{L}^2_{T}(B^{-1/2}_{2,1})}\Big)\nonumber\\&\leq&  C_{0}\|U_{0}\|_{B^{3/2}_{2,1}(\mathbb{R})},
\end{eqnarray*}
where $C_{0}>0$ is a constant.
\end{thm}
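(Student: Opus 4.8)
The plan is to follow the classical three-step scheme for quasilinear hyperbolic systems with dissipation---local existence, uniform-in-time a priori estimates, and a continuation argument---carried out entirely within the critical Chemin-Lerner framework, so that the propagation of $B^{3/2}_{2,1}$ regularity and the degenerate dissipative gains are obtained at once. Throughout, the embeddings $B^{3/2}_{2,1}(\mathbb{R})\hookrightarrow C^1$ and $B^{1/2}_{2,1}(\mathbb{R})\hookrightarrow C^0\cap L^\infty$ guarantee that a $\widetilde{\mathcal{C}}(B^{3/2}_{2,1})\cap\widetilde{\mathcal{C}}^1(B^{1/2}_{2,1})$ solution is automatically classical.

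For local existence I would run a linearized iteration $U^{n+1}_t+A(U^n)U^{n+1}_x+LU^{n+1}=0$ with truncated data $U^{n+1}(0)=S_{n+1}U_0$, solving each linear symmetric-hyperbolic problem (here $A(U^n)$ is symmetrizable since $\sigma'(z^n)>0$). Applying the dyadic block $\Delta_q$ to the linear equation, one gets a block energy estimate whose only error is the commutator $[\Delta_q,A(U^n)]\partial_x U^{n+1}$; the product, composition and commutator estimates of Lemmas \ref{lem3.1}-\ref{lem3.4} then give a uniform bound on a short interval $[0,T]$. Convergence of $(U^n)$ is established in the lower-order norm $\widetilde{\mathcal{C}}([0,T];B^{1/2}_{2,1})$, where the nonlinear estimates close without loss, and the limit is upgraded to the critical space by Fatou-type lower semicontinuity together with a standard time-continuity argument.

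The crux, and the main obstacle, is the uniform a priori estimate, which must produce not merely the bound on $\|U\|_{\widetilde{L}^\infty_T(B^{3/2}_{2,1})}$ but also the weighted dissipation $\|y\|_{\widetilde{L}^2_T(B^{3/2}_{2,1})}+\|(v,z_x)\|_{\widetilde{L}^2_T(B^{1/2}_{2,1})}+\|u_x\|_{\widetilde{L}^2_T(B^{-1/2}_{2,1})}$. The difficulty is precisely that $L$ is non-symmetric and weak: the plain block energy $\tfrac12\frac{d}{dt}\|\Delta_q U\|_{L^2}^2$ yields only $-\gamma\|\Delta_q y\|_{L^2}^2$, so the general symmetric-dissipation theory of \cite{XK1} is inapplicable. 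To recover the dissipation for the remaining components I would imitate, block by block, the Fourier-space Lyapunov construction of \cite{IHK,MK}: using the fourth equation of \eqref{R-E5} in the form $v=y_t-z_x+\gamma y$ and the first equation $v_t=u_x-y$, I would introduce interaction functionals---essentially $-\mathrm{Re}\langle\Delta_q y,\Delta_q v\rangle$ and its descendants coupling $(u,v)$ and $(y,z)$---whose time derivatives feed the missing dissipation first for $v$, then for $z_x$ and $u_x$, with the characteristic derivative losses that force the lower Besov indices $1/2$ and $-1/2$. Adding these with a small weight $\kappa$ to the block energy gives a Lyapunov functional $\mathcal{E}_q\sim\|\Delta_q U\|_{L^2}^2$ whose dissipation reproduces, block by block, the structure \eqref{R-E7} governed by $\eta_1$; taking square roots, multiplying by $2^{3q/2}$ and summing in $q$ converts this dissipation into exactly the combination of norms stated in the theorem.

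Finally, the nonlinear remainders generated by the quasilinearity of $A(U)$ and by the commutators are controlled through Lemmas \ref{lem3.1}-\ref{lem3.4}, bounded by $\|U\|_{\widetilde{L}^\infty_T(B^{3/2}_{2,1})}$ times the dissipative norms, and therefore absorbed into the left-hand side once the solution is small; the absence of any direct dissipation on $u$ and $z$ themselves (only on their derivatives) is handled via the homogeneous-to-inhomogeneous Chemin-Lerner relation recalled from \cite{XK1}. Closing the inequality by a continuity/bootstrap argument, the smallness $\|U_0\|_{B^{3/2}_{2,1}}\leq\delta_0$ propagates the a priori bound for all time, which rules out finite-time blow-up and yields simultaneously the global solution and the energy inequality.
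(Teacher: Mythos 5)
Your proposal matches the paper's proof in all essentials: local existence is quoted from the symmetric-hyperbolic theory of \cite{XK1} (Proposition \ref{prop3.1}); the dissipation for $v$, $z_{x}$, $u_{x}$ is recovered exactly through the cross functionals $-\int(vy+uz)\,dx$, $-\int z_{x}y\,dx$, $-\int \Delta_{q}v\,\Delta_{q}u_{x}\,dx$ of Lemmas \ref{lem3.2}--\ref{lem3.4} with the same regularity-loss hierarchy; the absence of dissipation on $u,z$ themselves is handled by the homogeneous/inhomogeneous Chemin--Lerner relation (Proposition \ref{prop2.1}); and the global bound follows by the same absorption-plus-bootstrap argument as Proposition \ref{prop3.2}. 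The only slips are cosmetic: the paper sums with component-dependent weights $2^{3q/2}$, $2^{q/2}$, $2^{-q/2}$ rather than a single $2^{3q/2}$, and the nonlinear tools you invoke are Propositions \ref{prop2.2}--\ref{prop2.4}, not Lemmas \ref{lem3.1}--\ref{lem3.4}.
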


\begin{rem}
To the best of our knowledge, Theorem \ref{thm1.1} exhibit the optimal critical regularity of global well-posedness for \eqref{R-E10}, which
is the first result in this direction for the Timoshenko system. Observe that there is 1-regularity-loss phenomenon for the dissipation rates
due to the nonlinear influence, which is totally different in comparison with the linearized system \eqref{R-E5} with $a=1$.
\end{rem}

Based on the global-in-time existence of solutions, we further obtain the optimal decay estimates.
Denote $\Lambda^{\alpha}f:=\mathcal{F}^{-1}|\xi|^{\alpha}\mathcal{F}f (\alpha\in \mathbb{R})$.
\begin{thm}\label{thm1.2}
Let $U(t,x)=(v,u,z,y)(t,x)$ be the global classical solution of Theorem \ref{thm1.1}. If further the initial data $U_{0}\in \dot{B}^{-1/2}_{2,\infty}(\mathbb{R})$ and
$$\mathcal{M}_{0}:=\|U_{0}\|_{B^{3/2}_{2,1}(\mathbb{R})\cap\dot{B}^{-1/2}_{2,\infty}(\mathbb{R})}$$
is sufficiently small. Then the classical solution $U(t,x)$ of \eqref{R-E10} admits the following decay estimates
\begin{eqnarray}
\|\Lambda^{\ell}U\|_{X_{1}(\mathbb{R})}\lesssim \mathcal{M}_{0}(1+t)^{-\frac{1}{4}-\frac{\ell}{2}} \label{R-E12}
\end{eqnarray}
for $0\leq\ell\leq1/2$, where $X_{1}:=B^{1/2-\ell}_{2,1}$ if $0\leq\ell<1/2$ and $X_{1}:=\dot{B}^{0}_{2,1}$ if $\ell=1/2$.
\end{thm}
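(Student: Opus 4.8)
The plan is to run a Duhamel / time-weighted energy scheme based on the frequency-localized dissipative structure \eqref{R-E7} (with $a=1$), carried out separately in low and high frequencies. First I would record the nonlinear structure: writing \eqref{R-E10} as $U_t+A(0)U_x+LU=\mathcal{N}(U)$ with $\mathcal{N}(U)=-(A(U)-A(0))U_x$, one checks from \eqref{R-E11} that the only nonzero component of $\mathcal{N}(U)$ is the fourth, equal to $-(\sigma'(z)-\sigma'(0))z_x$; since $a^2=\sigma'(0)=1$ we have $\sigma'(z)-1=O(z)$, so this component is $-\partial_xG(z)$ with $G(z)=\sigma(z)-\sigma(0)-z=O(z^2)$. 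Thus, to leading order, the nonlinearity is the spatial derivative of a quadratic, which is exactly what lets me place $\mathcal{N}(U)$ in $\dot B^{-1/2}_{2,\infty}$ at low frequencies. As linear building blocks, applying $\dot\Delta_j$ to the linearized system and running the Lyapunov-functional argument underlying \eqref{R-E7} yields, for each dyadic block, $\|\dot\Delta_j U(t)\|_{L^2}\lesssim e^{-c\eta_1(2^j)t}\|\dot\Delta_j U_0\|_{L^2}$, which is heat-like ($e^{-c2^{2j}t}$) for $2^j\lesssim1$ and exponential ($e^{-ct}$) for $2^j\gtrsim1$.

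The heart of the matter is the low-frequency estimate, and here the hypothesis $U_0\in\dot B^{-1/2}_{2,\infty}$ is decisive. For $0\le\ell<1/2$ the inhomogeneous target $B^{1/2-\ell}_{2,1}$ pools all low modes into a single block, so its low-frequency part is effectively measured at regularity $0$; hence the low-frequency contribution of $\|\Lambda^\ell U\|_{X_1}$ is controlled by $\sum_{j\le0}2^{j\ell}\|\dot\Delta_j U(t)\|_{L^2}$. Inserting the linear bound together with the defining inequality $\|\dot\Delta_j U_0\|_{L^2}\le 2^{j/2}\|U_0\|_{\dot B^{-1/2}_{2,\infty}}$ produces the dyadic sum
\[
\sum_{j\le0}2^{j(\ell+1/2)}e^{-c2^{2j}t}\ \lesssim\ \int_0^1\xi^{\ell-1/2}e^{-c\xi^2t}\,d\xi\ \lesssim\ (1+t)^{-\frac14-\frac{\ell}{2}},
\]
the last step being the substitution $u=\xi\sqrt t$, which makes the power explicit and reproduces exactly the asserted rate in \eqref{R-E12}. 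At the endpoint $\ell=1/2$ the single inhomogeneous block no longer carries the correct weight, which is precisely why $X_1$ is taken to be the homogeneous $\dot B^0_{2,1}$: there the low-frequency part is $\sum_{j\le0}2^{j/2}\|\dot\Delta_j U\|_{L^2}$ and the same computation returns $(1+t)^{-1/2}$. The degeneration to the single exponent $s=1/2$ (rather than a range $0<s\le1/2$) is forced by the one space dimension, where the interpolation underlying the general framework of \cite{XK2} is unavailable.

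For the high frequencies I would use the exponential factor $e^{-ct}$ from \eqref{R-E7} together with the uniform bound $\|U\|_{\widetilde{L}^\infty(B^{3/2}_{2,1})}\lesssim\mathcal{M}_0$ supplied by Theorem \ref{thm1.1}. Since the high-frequency part of $\|\Lambda^\ell U\|_{X_1}$ is taken only at regularity $1/2$ while $U$ stays bounded at regularity $3/2$, the one-derivative gap absorbs the $1$-regularity-loss generated by the nonlinearity, and the high-frequency contribution decays no slower than (indeed faster than) the low-frequency one. Consequently the low-frequency rate governs the final estimate.

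It remains to close the nonlinear terms. I would set
\[
\mathcal{E}(t):=\sup_{0\le\tau\le t}\ \sup_{0\le\ell\le1/2}\ (1+\tau)^{\frac14+\frac{\ell}{2}}\,\|\Lambda^\ell U(\tau)\|_{X_1},
\]
and run the Duhamel formula through the low/high splitting above. At low frequencies the divergence-of-a-quadratic structure gives $\|\mathcal{N}(U)\|_{\dot B^{-1/2}_{2,\infty}}\lesssim\|G(z)\|_{\dot B^{1/2}_{2,1}}\lesssim\|z\|_{L^\infty}\|z\|_{\dot B^{1/2}_{2,1}}$, which is quadratic in the weighted norm, so the nonlinear contribution is bounded by $\mathcal{E}(t)^2\int_0^t(1+t-\tau)^{-\frac14-\frac{\ell}{2}}(1+\tau)^{-\beta}\,d\tau$ for an appropriate $\beta$; the high-frequency nonlinear terms are handled by the frequency-localized energy inequality and the uniform $B^{3/2}_{2,1}$ bound. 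Altogether this should give $\mathcal{E}(t)\lesssim\mathcal{M}_0+\mathcal{E}(t)^2$, and smallness of $\mathcal{M}_0$ closes the estimate and yields \eqref{R-E12}. The hard part will be exactly this last step: the relevant time integral sits at the borderline where a naive bound leaves a logarithmic factor $(1+t)^{-\frac14-\frac{\ell}{2}}\log(1+t)$, so reaching the \emph{optimal} rate without the log requires the sharp observations \eqref{R-E78} and \eqref{R-E81} on the low-frequency nonlinear term — a difficulty that is genuinely sharpened by the one-dimensional setting and the non-symmetric, regularity-losing dissipation.
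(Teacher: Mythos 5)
Your architecture is the same as the paper's: frequency-localized semigroup bounds coming from the Lyapunov inequalities \eqref{R-E57}--\eqref{R-E58}, the frequency-localized Duhamel formula (Lemma \ref{lem4.1}), time-weighted sup-norms, exponential high-frequency decay \eqref{R-E71} combined with the product estimate $\|g'(z)z_x\|_{\dot B^{1/2}_{2,1}}\lesssim\|\Lambda^{\ell}z\|_{\dot B^{1/2-\ell}_{2,1}}\|z\|_{B^{3/2}_{2,1}}$, and the final bootstrap $\mathcal{E}_{1}(t)\lesssim\mathcal{M}_{0}+\mathcal{E}_{1}^{2}(t)$ after absorbing $\mathcal{E}_{0}(t)\lesssim\mathcal{M}_{0}$ via Theorem \ref{thm1.1}. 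Your linear low-frequency computation $\sum_{j\le0}2^{j(\ell+1/2)}e^{-c2^{2j}t}\lesssim(1+t)^{-\frac14-\frac{\ell}{2}}$ is exactly the content of \eqref{R-E76}, and your endpoint discussion ($\dot B^{0}_{2,1}$ at $\ell=1/2$, degeneracy of $s=1/2$ in one dimension) matches the paper's.

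However, there is a genuine gap at the one step the paper itself flags as new. The only nonlinear low-frequency bound you actually derive is $\|\mathcal{N}(U)\|_{\dot B^{-1/2}_{2,\infty}}\lesssim\|G(z)\|_{\dot B^{1/2}_{2,1}}\lesssim(1+\tau)^{-1}\mathcal{E}(t)^{2}$, i.e.\ $\beta=1$ in your Duhamel integral. Fed into $\int_{0}^{t}(1+t-\tau)^{-\frac14-\frac{\ell}{2}}(1+\tau)^{-1}d\tau$, the portion $\tau\in[0,t/2]$ produces $(1+t)^{-\frac14-\frac{\ell}{2}}\log(1+t)$ — the logarithmic loss you yourself acknowledge — and you never supply the device that removes it: you cite \eqref{R-E78} and \eqref{R-E81} as needed ``sharp observations'' without reconstructing them, so as written your scheme proves only a rate with a log factor, not \eqref{R-E12}. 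The missing idea, which is the paper's estimate of $I_{1}$ in \eqref{R-E78}, is a two-regime treatment after splitting at $t/2$: on $[0,t/2]$ one does \emph{not} use the $\dot B^{1/2}$ norm of $G(z)$, but instead shifts the full derivative $\partial_{x}$ onto the low-frequency semigroup, upgrading the kernel decay to $(1+t-\tau)^{-\frac14-\frac{\ell+1}{2}}$, and measures $G(z)$ in the weaker norm via $L^{1}\hookrightarrow\dot B^{-1/2}_{2,\infty}$ (Lemma \ref{lem2.3}), giving $\|G(z)\|_{\dot B^{-1/2}_{2,\infty}}\lesssim\|z\|_{L^2}^{2}\lesssim(1+\tau)^{-\frac12}\mathcal{E}_{1}^{2}(t)$. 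Then
\begin{equation*}
\int_{0}^{t/2}(1+t-\tau)^{-\frac14-\frac{\ell+1}{2}}(1+\tau)^{-\frac12}\,d\tau
\lesssim(1+t)^{-\frac14-\frac{\ell+1}{2}}(1+t)^{\frac12}
=(1+t)^{-\frac14-\frac{\ell}{2}},
\end{equation*}
so the non-integrable factor $(1+\tau)^{-1/2}$ is paid for by the extra half power gained from the derivative, with no logarithm; on $[t/2,t]$ your $\beta=1$ bound is precisely \eqref{R-E81} and closes because $\frac14+\frac{\ell}{2}<1$. Without this exchange — weaker norm of the nonlinearity with slower decay far from $t$, stronger norm with faster decay near $t$ — the optimal rate asserted in Theorem \ref{thm1.2} is not reached, so this step must be carried out, not merely pointed to.
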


Note that the $L^1(\mathbb{R})$ embedding property in Lemma \ref{lem2.3}, as an immediate by-product of Theorem \ref{thm1.2}, the usual optimal decay estimates of $L^{1}(\mathbb{R})$-$L^{2}(\mathbb{R})$ type are available.

\begin{cor}\label{cor1.1}
Let $U(t,x)=(v,u,z,y)(t,x)$ be the global classical solutions of Theorem \ref{thm1.1}.
If further the initial data $U_{0}\in L^1(\mathbb{R})$ and
$$\widetilde{\mathcal{M}}_{0}:=\|U_{0}\|_{B^{3/2}_{2,1}(\mathbb{R})\cap L^1(\mathbb{R})}$$
is sufficiently small, then
\begin{eqnarray}
\|\Lambda^{\ell}U\|_{L^2(\mathbb{R})}\lesssim \widetilde{\mathcal{M}}_{0}(1+t)^{-\frac{1}{4}-\frac{\ell}{2}} \label{R-E13}
\end{eqnarray}
for $0\leq\ell\leq 1/2$.
\end{cor}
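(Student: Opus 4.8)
The plan is to read the corollary off Theorem \ref{thm1.2} by inserting one embedding on the data and one on the solution, so that no new energy estimate is needed. First I would check that the hypotheses of Theorem \ref{thm1.2} are in force. Using the embedding chain $L^{1}(\mathbb{R})\hookrightarrow\dot{B}^{0}_{1,\infty}(\mathbb{R})\hookrightarrow\dot{B}^{-1/2}_{2,\infty}(\mathbb{R})$ from Lemma \ref{lem2.3} --- the first inclusion elementary and the second a one-dimensional Bernstein inequality $\|\dot{\Delta}_{j}f\|_{L^{2}}\lesssim 2^{j/2}\|\dot{\Delta}_{j}f\|_{L^{1}}$ --- the assumption $U_{0}\in L^{1}(\mathbb{R})$ gives $U_{0}\in\dot{B}^{-1/2}_{2,\infty}(\mathbb{R})$ with $\|U_{0}\|_{\dot{B}^{-1/2}_{2,\infty}}\lesssim\|U_{0}\|_{L^{1}}$. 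Since $B^{3/2}_{2,1}$ occurs in both $\mathcal{M}_{0}$ and $\widetilde{\mathcal{M}}_{0}$, this yields $\mathcal{M}_{0}\lesssim\widetilde{\mathcal{M}}_{0}$; in particular the smallness of $\widetilde{\mathcal{M}}_{0}$ forces the smallness of $\mathcal{M}_{0}$, and Theorem \ref{thm1.2} applies.

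Then I would invoke Theorem \ref{thm1.2} to get, for all $0\leq\ell\leq1/2$,
\begin{equation*}
\|\Lambda^{\ell}U(t)\|_{X_{1}}\lesssim\mathcal{M}_{0}(1+t)^{-\frac{1}{4}-\frac{\ell}{2}}\lesssim\widetilde{\mathcal{M}}_{0}(1+t)^{-\frac{1}{4}-\frac{\ell}{2}},
\end{equation*}
and convert the $X_{1}$-norm to the $L^{2}$-norm through the embedding $X_{1}\hookrightarrow L^{2}(\mathbb{R})$. For $0\leq\ell<1/2$ this is the inhomogeneous embedding $B^{1/2-\ell}_{2,1}\hookrightarrow B^{0}_{2,1}\hookrightarrow L^{2}$, legitimate because $1/2-\ell\geq0$; for the endpoint $\ell=1/2$ it is the homogeneous embedding $\dot{B}^{0}_{2,1}\hookrightarrow L^{2}$. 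Both rest only on the $\ell^{1}\hookrightarrow\ell^{2}$ summability together with Plancherel. Hence $\|\Lambda^{\ell}U\|_{L^{2}}\lesssim\|\Lambda^{\ell}U\|_{X_{1}}$, and combining with the display above establishes \eqref{R-E13}.

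Since the whole argument is a composition of embeddings, I do not expect a genuine analytic obstacle; the only points meriting care are organizational. One must verify that the smallness transfers in the correct direction, i.e.\ that controlling $\widetilde{\mathcal{M}}_{0}$ controls $\mathcal{M}_{0}$ rather than the reverse, which is exactly what $L^{1}\hookrightarrow\dot{B}^{-1/2}_{2,\infty}$ provides. One must also confirm that the output embedding is available uniformly across the full range $0\leq\ell\leq1/2$; this is precisely why the definition of $X_{1}$ splits the range, using the inhomogeneous space where $1/2-\ell>0$ guarantees a clean embedding and the degenerate homogeneous space $\dot{B}^{0}_{2,1}$ to absorb the critical endpoint. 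With these two checks the decay rate $(1+t)^{-1/4-\ell/2}$ passes verbatim from Theorem \ref{thm1.2} to the $L^{1}$--$L^{2}$ statement.
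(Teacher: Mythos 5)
Your proposal is correct and matches the paper's own route: the paper obtains Corollary \ref{cor1.1} exactly as an immediate by-product of Theorem \ref{thm1.2}, using the $L^{1}(\mathbb{R})\hookrightarrow\dot{B}^{-1/2}_{2,\infty}(\mathbb{R})$ embedding of Lemma \ref{lem2.3} (with $\varrho=1/2$, $r=2$, $p=1$) to pass from $\widetilde{\mathcal{M}}_{0}$ to $\mathcal{M}_{0}$, together with $X_{1}\hookrightarrow L^{2}$. Your additional checks (the direction of the smallness transfer and the split of the embedding across $0\leq\ell<1/2$ versus $\ell=1/2$) are exactly the points the paper leaves implicit, and they are handled correctly.
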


\begin{rem}
Let us mention that Theorem \ref{thm1.2} and Corollary \ref{cor1.1} exhibit various decay rates of solution and its derivatives of fractional order.
In comparison with \cite{IK}, here, the harmonic analysis allows to reduce significantly the regularity requirements on the initial data.
\end{rem}

The rest of this paper unfolds as follows. In Sect. \ref{sec:2}, we
present useful properties in Besov spaces, which will be used in the subsequence
analysis. In Sect. \ref{sec:3}, we construct the global-in-time solution
by Fourier localization energy methods. Based on the dissipative structure, in Sect. \ref{sec:4}, we
develop the decay property for the linearized Timoshenko system \eqref{R-E4}-\eqref{R-E5}
on the framework of Besov spaces. Then, by employing localized time-weighted energy approaches,
we deduce the optimal decay estimates for \eqref{R-E9}. In Sect. \ref{sec:5} (Appendix), we present those definitions for Besov spaces and
Chemin-Lerner spaces for the convenience of reader.

Finally, we explain some notations. Throughout the paper, $f\lesssim g$ denotes $f\leq Cg$, where $C>0$
is a generic constant. $f\thickapprox g$ means $f\lesssim g$ and $g\lesssim f$. Denote by $\mathcal{C}([0,T],X)$ (resp.,
$\mathcal{C}^{1}([0,T],X)$) the space of continuous (resp.,
continuously differentiable) functions on $[0,T]$ with values in a
Banach space $X$. Also, $\|(f,g,h)\|_{X}$ means $
\|f\|_{X}+\|g\|_{X}+\|h\|_{X}$, where $f,g,h\in X$.

\section{Tools}\setcounter{equation}{0}\label{sec:2}
In this section, we only present analysis properties in Besov spaces and Chemin-Lerner spaces in $\mathbb{R}^{n}(n\geq1)$, which will be used in the sequence section.
For convenience of reader, the Appendix is devoted to those definitions for Besov spaces and Chemin-Lerner spaces.

Firstly, we give an improved
Bernstein inequality (see, \textit{e.g.}, \cite{W}), which allows the case of fractional derivatives.

\begin{lem}\label{lem2.1}
Let $0<R_{1}<R_{2}$ and $1\leq a\leq b\leq\infty$.
\begin{itemize}
\item [(i)] If $\mathrm{Supp}\mathcal{F}f\subset \{\xi\in \mathbb{R}^{n}: |\xi|\leq
R_{1}\lambda\}$, then
\begin{eqnarray*}
\|\Lambda^{\alpha}f\|_{L^{b}}
\lesssim \lambda^{\alpha+n(\frac{1}{a}-\frac{1}{b})}\|f\|_{L^{a}}, \ \  \mbox{for any}\ \  \alpha\geq0;
\end{eqnarray*}

\item [(ii)]If $\mathrm{Supp}\mathcal{F}f\subset \{\xi\in \mathbb{R}^{n}:
R_{1}\lambda\leq|\xi|\leq R_{2}\lambda\}$, then
\begin{eqnarray*}
\|\Lambda^{\alpha}f\|_{L^{a}}\approx\lambda^{\alpha}\|f\|_{L^{a}}, \ \  \mbox{for any}\ \ \alpha\in\mathbb{R}.
\end{eqnarray*}
\end{itemize}
\end{lem}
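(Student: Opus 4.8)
The plan is to realize $\Lambda^{\alpha}$ as convolution against a rescaled kernel and then invoke Young's inequality, exploiting that on the spectral support of $f$ one may freely insert a smooth cutoff equal to $1$ there. For part (i), fix $\chi\in\mathcal{C}_c^\infty(\mathbb{R}^n)$ with $\chi\equiv1$ on $\{|\xi|\le R_1\}$ and $\mathrm{Supp}\,\chi\subset\{|\xi|\le 2R_1\}$. Since $\mathrm{Supp}\,\mathcal{F}f\subset\{|\xi|\le R_1\lambda\}$, we have $|\xi|^{\alpha}\mathcal{F}f=|\xi|^{\alpha}\chi(\xi/\lambda)\mathcal{F}f$, hence $\Lambda^{\alpha}f=g_\lambda*f$ with $g_\lambda:=\mathcal{F}^{-1}\big(|\xi|^{\alpha}\chi(\xi/\lambda)\big)$. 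A change of variables gives the exact scaling $g_\lambda(x)=\lambda^{n+\alpha}g_1(\lambda x)$, where $g_1:=\mathcal{F}^{-1}(|\cdot|^{\alpha}\chi)$, so that $\|g_\lambda\|_{L^{r}}=\lambda^{\alpha+n(1-1/r)}\|g_1\|_{L^{r}}$ for every $r\in[1,\infty]$. Choosing $r$ by Young's relation $1+1/b=1/r+1/a$, i.e. $1-1/r=1/a-1/b$, Young's inequality yields $\|\Lambda^{\alpha}f\|_{L^{b}}\le\|g_\lambda\|_{L^{r}}\|f\|_{L^{a}}=\lambda^{\alpha+n(1/a-1/b)}\|g_1\|_{L^{r}}\|f\|_{L^{a}}$, which is the desired estimate once $\|g_1\|_{L^{r}}<\infty$ is established.

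The main point, and the only genuinely delicate step, is to bound $\|g_1\|_{L^{r}}$ uniformly in the admissible range of $r$. When $\alpha$ is not an even integer the symbol $|\xi|^{\alpha}\chi(\xi)$ is merely H\"older continuous at the origin, so $g_1$ is \emph{not} Schwartz and one cannot simply cite rapid decay. Instead I would split $|\xi|^{\alpha}\chi=|\xi|^{\alpha}\chi_0+\big(\text{smooth, compactly supported away from }0\big)$, where $\chi_0$ localizes near the origin; the second piece contributes a Schwartz function, while for the first I would use the homogeneity of $|\xi|^{\alpha}$, whose distributional Fourier transform is $c_{n,\alpha}|x|^{-n-\alpha}$, to obtain the pointwise bound $|g_1(x)|\lesssim(1+|x|)^{-n-\alpha}$. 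Since $\alpha\ge0$ forces $(n+\alpha)r>n$ for all $r\ge1$, this decay is integrable to the $r$-th power, giving $\|g_1\|_{L^{r}}<\infty$ with a constant depending only on $n,\alpha,\chi$. This is exactly where the hypothesis $\alpha\ge0$ in (i) is used.

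For part (ii) the situation is simpler because the spectrum avoids a neighborhood of the origin. I would pick $\widetilde\chi\in\mathcal{C}_c^\infty$ with $\widetilde\chi\equiv1$ on $\{R_1\le|\xi|\le R_2\}$ and supported in $\{R_1/2\le|\xi|\le 2R_2\}$, and write $\Lambda^{\alpha}f=k_\lambda*f$ with $k_\lambda:=\mathcal{F}^{-1}(|\xi|^{\alpha}\widetilde\chi(\xi/\lambda))$. Now $|\xi|^{\alpha}\widetilde\chi$ is genuinely smooth and compactly supported for every $\alpha\in\mathbb{R}$, so $k_1:=\mathcal{F}^{-1}(|\cdot|^{\alpha}\widetilde\chi)\in\mathcal{S}\subset L^{1}$, and the scaling $\|k_\lambda\|_{L^{1}}=\lambda^{\alpha}\|k_1\|_{L^{1}}$ together with Young's inequality $\|k_\lambda*f\|_{L^{a}}\le\|k_\lambda\|_{L^{1}}\|f\|_{L^{a}}$ gives the upper bound $\|\Lambda^{\alpha}f\|_{L^{a}}\lesssim\lambda^{\alpha}\|f\|_{L^{a}}$. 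For the matching lower bound I would note that $\Lambda^{\alpha}f$ again has spectrum in the annulus and that $\Lambda^{-\alpha}\Lambda^{\alpha}f=f$; applying the upper bound already proven with $\alpha$ replaced by $-\alpha$ (legitimate since $\alpha\in\mathbb{R}$ is arbitrary in (ii)) yields $\|f\|_{L^{a}}\lesssim\lambda^{-\alpha}\|\Lambda^{\alpha}f\|_{L^{a}}$, i.e. $\lambda^{\alpha}\|f\|_{L^{a}}\lesssim\|\Lambda^{\alpha}f\|_{L^{a}}$. Combining the two bounds gives the equivalence $\|\Lambda^{\alpha}f\|_{L^{a}}\approx\lambda^{\alpha}\|f\|_{L^{a}}$.
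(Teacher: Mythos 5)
The paper offers no proof of this lemma --- it is quoted from the literature (see \cite{W}) --- so there is no in-paper argument to compare against; your proposal must be judged on its own, and it is essentially the standard proof, carried out correctly in its main lines: insert a cutoff equal to $1$ on the spectral support, realize $\Lambda^{\alpha}$ as convolution with a dilated kernel whose $L^{r}$ norms scale exactly like $\lambda^{\alpha+n(1-1/r)}$, and conclude by Young's inequality with $1+1/b=1/r+1/a$; in (ii) the symbol $|\xi|^{\alpha}\widetilde{\chi}$ is smooth and compactly supported for \emph{every} real $\alpha$, and your lower bound via applying the upper bound to $\Lambda^{-\alpha}$ acting on $\Lambda^{\alpha}f$ (whose spectrum stays in the annulus) is the usual and correct trick. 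Two points in part (i) need repair, though neither is fatal. First, your claim that ``$\alpha\ge 0$ forces $(n+\alpha)r>n$ for all $r\ge 1$'' fails in the borderline case $\alpha=0$, $r=1$ (i.e.\ $a=b$), where $(n+\alpha)r=n$ and the decay $(1+|x|)^{-n}$ is \emph{not} integrable; but in that case $|\xi|^{\alpha}\chi=\chi$ is smooth, so $g_{1}$ is Schwartz and the bound is immediate (indeed $\Lambda^{0}f=f$), and the same smoothness remark disposes of all even integers $\alpha$, so your singular-symbol analysis is genuinely needed only for non-even $\alpha>0$, where $(n+\alpha)r>n$ does hold. Second, the pointwise bound $|g_{1}(x)|\lesssim(1+|x|)^{-n-\alpha}$ is correct, but your route to it is only a sketch: the cutoff $\chi_{0}$ destroys homogeneity, and writing $|\xi|^{\alpha}\chi_{0}=|\xi|^{\alpha}-|\xi|^{\alpha}(1-\chi_{0})$ leaves a second term that is smooth but not compactly supported, so one cannot just quote the distributional Fourier transform $c_{n,\alpha}|x|^{-n-\alpha}$. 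A clean rigorous substitute is a dyadic decomposition of the symbol, $|\xi|^{\alpha}\chi_{0}=\sum_{j\le 0}|\xi|^{\alpha}\varphi(2^{-j}\xi)\chi_{0}$, each piece being smooth, supported where $|\xi|\approx 2^{j}$, with derivative bounds $2^{j(\alpha-|\beta|)}$; integration by parts gives kernel bounds $2^{j(n+\alpha)}(1+2^{j}|x|)^{-M}$, and summing over $j\le 0$ with $M>n+\alpha$ yields exactly $(1+|x|)^{-n-\alpha}$. With these two repairs your argument is a complete and self-contained proof of the lemma, which is arguably more informative than the paper's bare citation.
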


Besov spaces obey various inclusion relations. Precisely,
\begin{lem}\label{lem2.2} Let $s\in \mathbb{R}$ and $1\leq
p,r\leq\infty,$ then
\begin{itemize}
\item[(1)]If $s>0$, then $B^{s}_{p,r}=L^{p}\cap B^{s}_{p,r};$
\item[(2)]If $\tilde{s}\leq s$, then $B^{s}_{p,r}\hookrightarrow
B^{\tilde{s}}_{p,r}$. This inclusion relation is false for
the homogeneous Besov spaces;
\item[(3)]If $1\leq r\leq\tilde{r}\leq\infty$, then $\dot{B}^{s}_{p,r}\hookrightarrow
\dot{B}^{s}_{p,\tilde{r}}$ and $B^{s}_{p,r}\hookrightarrow
B^{s}_{p,\tilde{r}};$
\item[(4)]If $1\leq p\leq\tilde{p}\leq\infty$, then $\dot{B}^{s}_{p,r}\hookrightarrow \dot{B}^{s-n(\frac{1}{p}-\frac{1}{\tilde{p}})}_{\tilde{p},r}
$ and $B^{s}_{p,r}\hookrightarrow
B^{s-n(\frac{1}{p}-\frac{1}{\tilde{p}})}_{\tilde{p},r}$;
\item[(5)]$\dot{B}^{n/p}_{p,1}\hookrightarrow\mathcal{C}_{0},\ \ B^{n/p}_{p,1}\hookrightarrow\mathcal{C}_{0}(1\leq p<\infty);$
\end{itemize}
where $\mathcal{C}_{0}$ is the space of continuous bounded functions
which decay at infinity.
\end{lem}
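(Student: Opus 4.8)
The plan is to read all five assertions off the Littlewood--Paley characterizations recalled in the Appendix, reducing each to an elementary estimate on the dyadic blocks $\dot\Delta_j f$ (resp.\ the inhomogeneous blocks $\Delta_j f$, $j\geq-1$) together with a discrete inequality on the sequences $(2^{js}\|\dot\Delta_j f\|_{L^p})_j$. The only external inputs I need are the Bernstein inequality of Lemma \ref{lem2.1} and the classical nesting $\ell^r\hookrightarrow\ell^{\tilde r}$ for $r\leq\tilde r$.

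I would dispatch the soft inclusions first. Assertion (3) is nothing but $\|a\|_{\ell^{\tilde r}}\leq\|a\|_{\ell^r}$ applied to $a=(2^{js}\|\dot\Delta_j f\|_{L^p})_j$ and its inhomogeneous analogue, so it is immediate. For assertion (2), since the inhomogeneous index ranges over $j\geq-1$, the weight ratio $2^{j(\tilde s-s)}$ is bounded by $2^{s-\tilde s}$ whenever $\tilde s\leq s$; hence each term of the $B^{\tilde s}_{p,r}$ sequence is dominated by the corresponding $B^s_{p,r}$ term and the inclusion follows. The same argument collapses for homogeneous spaces, because there $j$ runs to $-\infty$ and the weight ratio is unbounded on low frequencies; a profile spectrally supported near the origin then furnishes the stated counterexample. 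Assertion (4) is Lemma \ref{lem2.1}(i) applied block by block, $\|\dot\Delta_j f\|_{L^{\tilde p}}\lesssim 2^{jn(1/p-1/\tilde p)}\|\dot\Delta_j f\|_{L^p}$, after which multiplying by $2^{j(s-n(1/p-1/\tilde p))}$ and taking $\ell^r$ gives exactly the claimed embedding, and likewise with $\Delta_j$ in the inhomogeneous case.

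The genuine content is assertion (1), where $s>0$ is indispensable and where I read the right-hand side as $L^p\cap\dot B^s_{p,r}$ (the non-trivial claim being the identification of the inhomogeneous space with $L^p$ intersected with its homogeneous counterpart). For the inclusion $B^s_{p,r}\hookrightarrow L^p\cap\dot B^s_{p,r}$ I would first note $B^s_{p,r}\hookrightarrow L^p$: a H\"older step in $\sum_{j\geq-1}2^{js'}\|\Delta_j f\|_{L^p}$ with $0<s'<s$ gives $B^s_{p,r}\hookrightarrow B^{0}_{p,1}$, and $\|f\|_{L^p}\leq\sum_{j\geq-1}\|\Delta_j f\|_{L^p}=\|f\|_{B^0_{p,1}}$. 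Once $f\in L^p$, Bernstein yields $\|\dot\Delta_j f\|_{L^p}\lesssim\|f\|_{L^p}$ for $j<0$, so the homogeneous low-frequency tail $\sum_{j<0}2^{jsr}\|\dot\Delta_j f\|_{L^p}^{r}\lesssim\|f\|_{L^p}^{r}\sum_{j<0}2^{jsr}$ converges precisely because $s>0$, while the high-frequency blocks of the two decompositions coincide; hence $f\in\dot B^s_{p,r}$. The reverse inclusion is the reverse bookkeeping: the single inhomogeneous low block is controlled by $\|f\|_{L^p}$ and the high blocks by $\|f\|_{\dot B^s_{p,r}}$. I expect this tail summation, together with the alignment of the two decompositions across the overlap, to be the main obstacle; the use of $s>0$ here is exactly what fails for the homogeneous spaces in (2).

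Finally, for assertion (5) I would chain (4) with a direct continuity argument. By (4) with $\tilde p=\infty$ one has $\dot B^{n/p}_{p,1}\hookrightarrow\dot B^{0}_{\infty,1}$, so it suffices to prove $\dot B^{0}_{\infty,1}\hookrightarrow\mathcal{C}_0$. Finiteness of $\sum_j\|\dot\Delta_j f\|_{L^\infty}$ makes the series $\sum_j\dot\Delta_j f$ converge uniformly; since each $\dot\Delta_j f$ is a convolution with a Schwartz kernel, hence continuous and vanishing at infinity, the uniform limit inherits both properties, placing $f\in\mathcal{C}_0$. The inhomogeneous statement is identical, treating the low frequencies through the block $\Delta_{-1}f$.
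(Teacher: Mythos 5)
The paper offers no proof of this lemma at all -- it is quoted as a stock of classical Littlewood--Paley facts (cf.\ \cite{BCD}) -- so the only question is whether your argument is sound. Parts (2)--(4) are: (3) is the nesting $\ell^{r}\hookrightarrow\ell^{\tilde r}$, (2) correctly exploits that $j\geq-1$ bounds the weight ratio $2^{j(\tilde s-s)}$ and that this bound evaporates as $j\to-\infty$ in the homogeneous case, and (4) is blockwise Bernstein. Your reading of (1) as $B^{s}_{p,r}=L^{p}\cap\dot{B}^{s}_{p,r}$ (correcting the paper's evident typo) and the two-sided bookkeeping -- $B^{s}_{p,r}\hookrightarrow B^{0}_{p,1}\hookrightarrow L^{p}$ via H\"older with $s>0$, then summability of the low-frequency tail $\sum_{j<0}2^{jsr}$ -- is the standard proof; two cosmetic remarks: the bound $\|\dot{\Delta}_{j}f\|_{L^{p}}\lesssim\|f\|_{L^{p}}$ is Young's inequality for convolution with the dilated kernel rather than Bernstein, and one should say a word about fixing the realization of $f$ modulo polynomials.

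The genuine gap is in (5). The factorization $\dot{B}^{n/p}_{p,1}\hookrightarrow\dot{B}^{0}_{\infty,1}\hookrightarrow\mathcal{C}_{0}$ fails at the second arrow: $\dot{B}^{0}_{\infty,1}$ does \emph{not} embed into $\mathcal{C}_{0}$ as the paper defines it (decay at infinity). Indeed $f(x)=e^{ix\cdot\xi_{0}}$ with $\xi_{0}\neq0$ has $\dot{\Delta}_{j}f=\varphi(2^{-j}\xi_{0})e^{ix\cdot\xi_{0}}$, so only finitely many blocks are nonzero and $\|f\|_{\dot{B}^{0}_{\infty,1}}=\sum_{j}\varphi(2^{-j}\xi_{0})=1<\infty$, yet $|f|\equiv1$ does not decay. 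Correspondingly, your claim that each $\dot{\Delta}_{j}f$ is ``continuous and vanishing at infinity'' because it is a convolution with a Schwartz kernel is false for general tempered $f$: such a convolution is smooth and here bounded, but nothing forces decay -- the same exponential is the counterexample. The hypothesis $1\leq p<\infty$ is exactly what produces the decay, and routing through $\tilde p=\infty$ discards it before it can be used. The repair keeps the blocks in $L^{p}$: Bernstein gives $\|\dot{\Delta}_{j}f\|_{L^{\infty}}\lesssim 2^{jn/p}\|\dot{\Delta}_{j}f\|_{L^{p}}$, so $\sum_{j}\dot{\Delta}_{j}f$ converges uniformly, while each block factors as $\dot{\Delta}_{j}f=\tilde h_{j}*\dot{\Delta}_{j}f$ with $\tilde h_{j}\in\mathcal{S}\subset L^{p'}$ (a slightly wider annulus cutoff) and $\dot{\Delta}_{j}f\in L^{p}$; for $p<\infty$ one approximates the $L^{p}$ factor by compactly supported continuous functions to conclude each block lies in $\mathcal{C}_{0}$, and the uniform limit inherits this. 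The inhomogeneous statement then follows the same way, treating $\Delta_{-1}f\in L^{p}$ by the identical convolution argument.
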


\begin{lem}\label{lem2.3}
Suppose that $\varrho>0$ and $1\leq p<2$. It holds that
\begin{eqnarray*}
\|f\|_{\dot{B}^{-\varrho}_{r,\infty}}\lesssim \|f\|_{L^{p}}
\end{eqnarray*}
with $1/p-1/r=\varrho/n$. In particular, this holds with $\varrho=n/2, r=2$ and $p=1$.
\end{lem}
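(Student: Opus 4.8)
The plan is to reduce the homogeneous Besov norm to a supremum over dyadic frequency blocks and then trade integrability for a power of the frequency by means of the Bernstein inequality of Lemma~\ref{lem2.1}. Recall that, with $\dot{\Delta}_{q}$ denoting the homogeneous Littlewood--Paley projection onto frequencies $|\xi|\approx 2^{q}$ (see the Appendix), one has $\|f\|_{\dot{B}^{-\varrho}_{r,\infty}}=\sup_{q\in\mathbb{Z}}2^{-q\varrho}\|\dot{\Delta}_{q}f\|_{L^{r}}$. Since the relation $1/p-1/r=\varrho/n>0$ already forces $p\le r$, the hypothesis $1\le a\le b\le\infty$ of Lemma~\ref{lem2.1}(i) is met with $a=p$ and $b=r$.

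First I would fix $q\in\mathbb{Z}$ and apply Lemma~\ref{lem2.1}(i) with $\alpha=0$ and $\lambda=2^{q}$ to the block $\dot{\Delta}_{q}f$, whose spectrum lies in a ball of radius $\sim 2^{q}$. This yields
\begin{equation*}
\|\dot{\Delta}_{q}f\|_{L^{r}}\lesssim 2^{qn(\frac{1}{p}-\frac{1}{r})}\|\dot{\Delta}_{q}f\|_{L^{p}}=2^{q\varrho}\|\dot{\Delta}_{q}f\|_{L^{p}},
\end{equation*}
where the last equality uses precisely $n(1/p-1/r)=\varrho$. Multiplying by $2^{-q\varrho}$ cancels the Besov weight and leaves $2^{-q\varrho}\|\dot{\Delta}_{q}f\|_{L^{r}}\lesssim\|\dot{\Delta}_{q}f\|_{L^{p}}$.

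Next I would bound the block uniformly by the full $L^{p}$ norm. Since $\dot{\Delta}_{q}$ is a convolution operator whose kernel is an $L^{1}$-normalized dilate of a fixed Schwartz function, Young's inequality gives $\|\dot{\Delta}_{q}f\|_{L^{p}}\lesssim\|f\|_{L^{p}}$ with a constant independent of $q$ (this is the standard $L^{p}$-boundedness of Littlewood--Paley projections, valid for all $1\le p\le\infty$). Combining the two estimates yields $2^{-q\varrho}\|\dot{\Delta}_{q}f\|_{L^{r}}\lesssim\|f\|_{L^{p}}$ uniformly in $q$, and taking the supremum over $q\in\mathbb{Z}$ produces the claimed inequality. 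The special case follows by checking $1/1-1/2=1/2=(n/2)/n$, so that $\varrho=n/2$, $r=2$, $p=1$ are admissible.

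I do not expect a genuine obstacle here: the statement is the classical embedding $L^{p}\hookrightarrow\dot{B}^{-n(1/p-1/r)}_{r,\infty}$, proved by a single Bernstein estimate per frequency block. The only points requiring care are the exponent bookkeeping---$n(1/p-1/r)=\varrho$ produces the weight cancellation, while $\varrho>0$ already forces $p\le r$ so that Lemma~\ref{lem2.1}(i) applies---and the observation that the $\ell^{\infty}$ index in the Besov norm is exactly what lets a uniform-in-$q$ bound close the argument (an $\ell^{r}$ index would instead require summability of the blocks). The hypothesis $1\le p<2$ is not needed for the mechanism itself; it merely singles out the regime relevant to this paper, in which the target exponent $r=2$ (whence $\dot{B}^{-1/2}_{2,\infty}$ via $\varrho=n/2$, $p=1$) is admissible.
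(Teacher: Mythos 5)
Your proof is correct and is exactly the canonical argument behind this lemma, which the paper states without proof as a standard embedding (it is the classical fact $L^{p}\hookrightarrow\dot{B}^{-n(1/p-1/r)}_{r,\infty}$, proved blockwise via Lemma~\ref{lem2.1}(i) together with the uniform $L^{p}$-boundedness of $\dot{\Delta}_{q}$, with the $\ell^{\infty}$ index absorbing the uniform-in-$q$ bound). Your bookkeeping is accurate, including the observations that $\varrho>0$ forces $p<r$ so Bernstein applies and that the hypothesis $1\leq p<2$ is not needed for the mechanism but merely marks the regime used in the paper ($\varrho=n/2$, $r=2$, $p=1$).
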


The global existence depends on a key fact, which indicates the connection between
homogeneous Chemin-Lerner spaces and inhomogeneous Chemin-Lerner
spaces, see \cite{XK1} for the proof.  Precisely,
\begin{prop} \label{prop2.1}
Let $s\in \mathbb{R}$ and $1\leq \theta, p,r\leq\infty$.
\begin{itemize}
\item [$(1)$] It holds that
\begin{eqnarray*}
L^{\theta}_{T}(L^{p})\cap
\widetilde{L}^{\theta}_{T}(\dot{B}^{s}_{p,r})\subset \widetilde{L}^{\theta}_{T}(B^{s}_{p,r});
\end{eqnarray*}
\item [$(2)$] Furthermore, as $s>0$ and $\theta\geq r$, it holds that
\begin{eqnarray*}
L^{\theta}_{T}(L^{p})\cap
\widetilde{L}^{\theta}_{T}(\dot{B}^{s}_{p,r})=\widetilde{L}^{\theta}_{T}(B^{s}_{p,r})
\end{eqnarray*}
\end{itemize}
for any $T>0$.
\end{prop}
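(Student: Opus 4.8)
The plan is to exploit the single structural difference between the inhomogeneous and homogeneous Littlewood--Paley decompositions: the dyadic blocks coincide at high frequencies, $\Delta_{j}=\dot{\Delta}_{j}$ for $j\geq 0$, while at low frequencies the inhomogeneous decomposition collapses the infinitely many homogeneous blocks $(\dot{\Delta}_{j})_{j\leq-1}$ into the single block $\Delta_{-1}=S_{0}$. Accordingly I would split every Chemin--Lerner norm into its low-frequency ($j=-1$, resp. $j\leq-1$) and high-frequency ($j\geq0$) contributions and estimate the two pieces separately, using that $\dot{\Delta}_{j}$ and $S_{0}$ are bounded on $L^{p}$ uniformly in $j$ by Young's inequality, since their kernels are fixed $L^{1}$ functions rescaled.

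For part $(1)$, take $f\in L^{\theta}_{T}(L^{p})\cap\widetilde{L}^{\theta}_{T}(\dot{B}^{s}_{p,r})$. In the high-frequency range the identity $\Delta_{j}=\dot{\Delta}_{j}$ $(j\geq0)$ gives $\sum_{j\geq0}2^{jsr}\|\Delta_{j}f\|_{L^{\theta}_{T}(L^{p})}^{r}\leq\|f\|_{\widetilde{L}^{\theta}_{T}(\dot{B}^{s}_{p,r})}^{r}$. For the single low-frequency block, the uniform $L^{p}$-boundedness of $S_{0}$ yields $\|\Delta_{-1}f\|_{L^{\theta}_{T}(L^{p})}\lesssim\|f\|_{L^{\theta}_{T}(L^{p})}$, and the attached weight $2^{-s}$ is a harmless constant. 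Summing the two contributions, and replacing sums by suprema when $r=\infty$, produces $\|f\|_{\widetilde{L}^{\theta}_{T}(B^{s}_{p,r})}\lesssim\|f\|_{L^{\theta}_{T}(L^{p})}+\|f\|_{\widetilde{L}^{\theta}_{T}(\dot{B}^{s}_{p,r})}$, which is the desired embedding and in fact holds for every $s\in\mathbb{R}$.

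For the equality in part $(2)$ only the reverse inclusion is new, so I would start from $f\in\widetilde{L}^{\theta}_{T}(B^{s}_{p,r})$ with $s>0$ and recover both defining norms on the right. To see $f\in L^{\theta}_{T}(L^{p})$, I reconstruct $f=\sum_{j\geq-1}\Delta_{j}f$ and apply Minkowski's inequality in the time variable to get $\|f\|_{L^{\theta}_{T}(L^{p})}\leq\sum_{j\geq-1}\|\Delta_{j}f\|_{L^{\theta}_{T}(L^{p})}$; writing each summand as $2^{-js}\cdot 2^{js}\|\Delta_{j}f\|_{L^{\theta}_{T}(L^{p})}$ and using H\"older in $j$ with conjugate exponents $1/r+1/r'=1$, the factor $\|(2^{-js})_{j\geq-1}\|_{\ell^{r'}}$ is finite precisely because $s>0$, so $\|f\|_{L^{\theta}_{T}(L^{p})}\lesssim_{s}\|f\|_{\widetilde{L}^{\theta}_{T}(B^{s}_{p,r})}$. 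To see $f\in\widetilde{L}^{\theta}_{T}(\dot{B}^{s}_{p,r})$, the high-frequency part is again controlled by the block identity; for the low frequencies I note that for $j\leq-1$ the spectral support of $\dot{\Delta}_{j}$ meets only those of $\Delta_{-1}$ and $\Delta_{0}$, so $\dot{\Delta}_{j}f=\dot{\Delta}_{j}(\Delta_{-1}f+\Delta_{0}f)$ and hence $\|\dot{\Delta}_{j}f\|_{L^{\theta}_{T}(L^{p})}\lesssim\|\Delta_{-1}f\|_{L^{\theta}_{T}(L^{p})}+\|\Delta_{0}f\|_{L^{\theta}_{T}(L^{p})}$ uniformly in $j$. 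Inserting this bound and summing the geometric series $\sum_{j\leq-1}2^{jsr}<\infty$, once more convergent because $s>0$, dominates the low-frequency tail by $\|f\|_{\widetilde{L}^{\theta}_{T}(B^{s}_{p,r})}$.

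The main obstacle is exactly this low-frequency reconstruction in part $(2)$: one must control the infinite family $(\dot{\Delta}_{j}f)_{j\leq-1}$ of homogeneous blocks by the finitely many inhomogeneous low blocks, and this is possible only through the geometric summability $\sum_{j\leq-1}2^{jsr}<\infty$, which forces the restriction $s>0$ and explains why the homogeneous-to-inhomogeneous passage must fail for $s\leq0$ (consistent with the failure of the embedding noted in Lemma \ref{lem2.2}(2)). The delicacy that homogeneous blocks be genuine functions rather than tempered distributions modulo polynomials is likewise absorbed by $s>0$, while the hypothesis $\theta\geq r$ is what guarantees, via the same Minkowski inequality in the time variable, the compatibility of the space--time ordering of norms needed if one wishes to identify these Chemin--Lerner spaces with the classical Bochner space $L^{\theta}_{T}(B^{s}_{p,r})$.
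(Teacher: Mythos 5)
Your proof is correct, and it is essentially the argument one expects here: the paper itself does not prove Proposition \ref{prop2.1} but defers to \cite{XK1}, where the proof rests on exactly the low/high-frequency splitting you use ($\Delta_{j}=\dot{\Delta}_{j}$ for $j\geq0$, uniform $L^{p}$-boundedness of the blocks, and geometric summability $\sum_{j\leq-1}2^{jsr}<\infty$ from $s>0$). One point worth flagging: your direct route for the inclusion $\widetilde{L}^{\theta}_{T}(B^{s}_{p,r})\subset L^{\theta}_{T}(L^{p})$ --- H\"older in $j$ against the weight $(2^{-js})_{j\geq-1}\in\ell^{r'}$ --- never invokes $\theta\geq r$, so your argument actually establishes the set equality in part $(2)$ for all $1\leq\theta\leq\infty$; the hypothesis $\theta\geq r$ enters only if one instead passes through the Bochner space via Remark \ref{rem5.1}, i.e. $\widetilde{L}^{\theta}_{T}(B^{s}_{p,r})\subset L^{\theta}_{T}(B^{s}_{p,r})\subset L^{\theta}_{T}(L^{p})$ using $B^{s}_{p,r}\hookrightarrow L^{p}$ for $s>0$, which is the natural reading of why it appears in the statement, as you correctly surmise in your closing paragraph.
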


Let us state the Moser-type product estimates, which plays an important role in the estimate of bilinear
terms.
\begin{prop}\label{prop2.2}
Let $s>0$ and $1\leq
p,r\leq\infty$. Then $\dot{B}^{s}_{p,r}\cap L^{\infty}$ is an algebra and
$$
\|fg\|_{\dot{B}^{s}_{p,r}}\lesssim \|f\|_{L^{\infty}}\|g\|_{\dot{B}^{s}_{p,r}}+\|g\|_{L^{\infty}}\|f\|_{\dot{B}^{s}_{p,r}}.
$$
Let $s_{1},s_{2}\leq n/p$ such that $s_{1}+s_{2}>n\max\{0,\frac{2}{p}-1\}. $  Then one has
$$\|fg\|_{\dot{B}^{s_{1}+s_{2}-n/p}_{p,1}}\lesssim \|f\|_{\dot{B}^{s_{1}}_{p,1}}\|g\|_{\dot{B}^{s_{2}}_{p,1}}.$$
\end{prop}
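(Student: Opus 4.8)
The plan is to reduce both inequalities to Bony's paraproduct decomposition and then estimate the resulting pieces with the Bernstein inequalities of Lemma \ref{lem2.1} and the inclusions of Lemma \ref{lem2.2}. Writing $\Delta_j$ for the Littlewood--Paley blocks and $S_{j-1}=\sum_{k\le j-2}\Delta_k$ for the low-frequency cutoff, I would split
$$fg = T_f g + T_g f + R(f,g),\qquad T_f g=\sum_j S_{j-1}f\,\Delta_j g,\quad R(f,g)=\sum_{|j-k|\le 1}\Delta_j f\,\Delta_k g,$$
and treat the two paraproducts and the remainder separately. The guiding principle is that each summand of a paraproduct is spectrally supported in a dyadic annulus of size $2^j$, so quasi-orthogonality collapses $\Delta_j(T_f g)$ to a bounded number of terms, whereas each summand of $R(f,g)$ is supported only in a \emph{ball} of radius $\sim 2^k$, which forces a genuine summation over $k\ge j$ and is where the hypotheses on the regularity indices are consumed.

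For the algebra estimate I would bound $\|\Delta_j(T_f g)\|_{L^p}\lesssim \sum_{|j-j'|\le C}\|S_{j'-1}f\|_{L^\infty}\|\Delta_{j'}g\|_{L^p}\lesssim \|f\|_{L^\infty}\sum_{|j-j'|\le C}\|\Delta_{j'}g\|_{L^p}$, and taking the $\ell^r(2^{js})$ norm gives $\|T_f g\|_{\dot B^s_{p,r}}\lesssim \|f\|_{L^\infty}\|g\|_{\dot B^s_{p,r}}$, with the symmetric bound for $T_g f$. For the remainder I would use that $\Delta_j R(f,g)$ picks up only the indices $k\ge j-C$ and estimate $\|\Delta_j R(f,g)\|_{L^p}\lesssim \sum_{k\ge j-C}\|\Delta_k f\|_{L^\infty}\|\Delta_k g\|_{L^p}$; after multiplying by $2^{js}$ and summing in $j$, the convergence of the geometric factor $2^{(j-k)s}$ over $j\le k+C$ requires exactly $s>0$. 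Summing the three contributions yields the algebra inequality.

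For the second estimate the same decomposition applies, but now the two factors live in $\dot B^{s_1}_{p,1}$ and $\dot B^{s_2}_{p,1}$ and one must convert $L^\infty$ control of the cutoffs into $L^p$ control at the cost of a Bernstein factor $2^{kn/p}$. In the paraproduct $T_f g$ this produces the sum $\sum_{k\le j-2}2^{k(n/p-s_1)}$ inside $\|S_{j-1}f\|_{L^\infty}$, whose convergence at low frequencies is guaranteed precisely by $s_1\le n/p$ (and symmetrically $s_2\le n/p$ for $T_g f$); both paraproducts then land in $\dot B^{s_1+s_2-n/p}_{p,1}$. The remainder is again the delicate term: bounding $\Delta_k f\,\Delta_k g$ by H\"older and returning to $L^p$ either through Lemma \ref{lem2.2}(4) (when $p\ge 2$, where the product naturally sits in $L^{p/2}$) or through the Bernstein inequality, one finds that the series over $k\ge j$ converges if and only if $s_1+s_2>n\max\{0,\tfrac{2}{p}-1\}$, which is the stated hypothesis.

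The main obstacle throughout is the remainder term $R(f,g)$: unlike the paraproducts, its summands are supported in balls rather than annuli, so quasi-orthogonality is unavailable and one is left to sum a series in $k\ge j$ whose convergence hinges on the sign and size of the regularity exponents. Once the frequency bookkeeping is set up, the two paraproducts are routine, and matching the precise conditions $s>0$, $s_i\le n/p$, and $s_1+s_2>n\max\{0,\tfrac{2}{p}-1\}$ to the three pieces is what completes the proof.
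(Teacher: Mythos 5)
The paper gives no proof of Proposition \ref{prop2.2} at all: it is quoted as a standard Moser-type product estimate, with the Littlewood--Paley machinery deferred to the cited literature (notably \cite{BCD}), and your Bony paraproduct decomposition $fg=T_fg+T_gf+R(f,g)$ is precisely the canonical proof given there. Your bookkeeping of where each hypothesis is consumed is correct --- $s>0$ only for the remainder in the algebra bound; $s_1,s_2\le n/p$ for the two paraproducts in the second estimate, with the endpoint $s_i=n/p$ absorbed by the $\ell^1$ summability coming from the third index $r=1$; and $s_1+s_2>n\max\{0,\tfrac{2}{p}-1\}$ for the remainder, obtained via the $L^{p/2}$ route and Lemma \ref{lem2.2}(4) when $p\ge2$ and via H\"older into $L^1$ followed by Bernstein back to $L^p$ when $p<2$ --- so the proposal is sound and matches the standard argument the paper implicitly relies on.
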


In the sequel we also need a estimate for commutator.
\begin{prop}\label{prop2.3}
Let  $1<p<\infty, 1\leq \theta \leq\infty$ and $\
s\in(-\frac{n}{p}-1, \frac{n}{p}]$. Then there exists a generic
constant $C>0$ depending only on $s, N$ such that
$$\begin{cases}
\|[f,\dot{\Delta}_{q}]g\|_{L^{p}}\leq
Cc_{q}2^{-q(s+1)}\|f\|_{\dot{B}^{\frac{n}{p}+1}_{p,1}}\|g\|_{\dot{B}^{s}_{p,1}},\cr
\|[f,\dot{\Delta}_{q}]g\|_{L^{\theta}_{T}(L^{p})}\leq
Cc_{q}2^{-q(s+1)}\|f\|_{\widetilde{L}^{\theta_{1}}_{T}(\dot{B}^{\frac{n}{p}+1}_{p,1})}\|g\|_{\widetilde{L}^{\theta_{2}}_{T}(\dot{B}^{s}_{p,1})},\end{cases}
$$
with $1/\theta=1/\theta_{1}+1/\theta_{2}$, where the commutator
$[\cdot,\cdot]$ is defined by $[f,g]=fg-gf$ and $\{c_{q}\}$ denotes
a sequence such that $\|(c_{q})\|_{ {l^{1}}}\leq 1$.
\end{prop}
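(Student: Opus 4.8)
The plan is to reduce everything to Bony's paraproduct calculus. Writing the product through the decomposition $fg=T_fg+T_gf+R(f,g)$, where $T_fg=\sum_{q'}S_{q'-1}f\,\dot\Delta_{q'}g$ is the paraproduct and $R(f,g)=\sum_{|i-j|\le1}\dot\Delta_if\,\dot\Delta_jg$ the remainder ($S_{q'-1}$ denoting the low-frequency cutoff), and applying the same decomposition to $f\dot\Delta_qg$, the commutator splits as
\[
[f,\dot\Delta_q]g=[T_f,\dot\Delta_q]g+T_{\dot\Delta_qg}f+R(f,\dot\Delta_qg)-\dot\Delta_qT_gf-\dot\Delta_qR(f,g).
\]
The first term carries the genuine commutator structure and is the heart of the estimate; the remaining four are handled by direct Hölder and Bernstein bounds. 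For the time-dependent statement the very same splitting is used, Hölder's inequality in $t$ with $1/\theta=1/\theta_1+1/\theta_2$ being applied at each product.

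For the main term I would use that $\dot\Delta_{q'}\dot\Delta_q=0$ unless $|q'-q|\le1$ and that $\dot\Delta_q(S_{q'-1}f\,\dot\Delta_{q'}g)$ vanishes unless $|q'-q|\le N_0$, so that $[T_f,\dot\Delta_q]g=\sum_{|q'-q|\le N_0}[S_{q'-1}f,\dot\Delta_q]\dot\Delta_{q'}g$ is a finite near-diagonal sum. Writing $\dot\Delta_q$ as convolution against $h_q=2^{qn}h(2^q\cdot)$ with $h=\mathcal F^{-1}\varphi$, each block becomes
\[
[S_{q'-1}f,\dot\Delta_q]\dot\Delta_{q'}g(x)=\int h_q(x-y)\bigl(S_{q'-1}f(y)-S_{q'-1}f(x)\bigr)\dot\Delta_{q'}g(y)\,dy.
\]
A first-order Taylor expansion $S_{q'-1}f(y)-S_{q'-1}f(x)=(y-x)\cdot\int_0^1\nabla S_{q'-1}f(x+\tau(y-x))\,d\tau$ combined with Young's inequality then yields the bound $\||z|h_q\|_{L^1}\,\|\nabla S_{q'-1}f\|_{L^\infty}\,\|\dot\Delta_{q'}g\|_{L^p}$. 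Here $\||z|h_q\|_{L^1}\lesssim2^{-q}$ supplies the crucial smoothing factor, while Bernstein's inequality (Lemma \ref{lem2.1}) gives $\|\nabla S_{q'-1}f\|_{L^\infty}\lesssim\|f\|_{\dot B^{n/p+1}_{p,1}}$; since $q'\sim q$, the factor $\|\dot\Delta_{q'}g\|_{L^p}=c_{q'}2^{-q's}\|g\|_{\dot B^s_{p,1}}$ produces exactly the desired weight $c_q2^{-q(s+1)}\|f\|_{\dot B^{n/p+1}_{p,1}}\|g\|_{\dot B^s_{p,1}}$ with an $\ell^1$ sequence.

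For the four remaining pieces I would estimate each block by Hölder, placing $f$ in $L^\infty$ through $\|\dot\Delta_if\|_{L^\infty}\lesssim2^{-i}\tilde c_i\|f\|_{\dot B^{n/p+1}_{p,1}}$ (Bernstein) and $g$ in $L^p$. The terms $T_{\dot\Delta_qg}f$ and $R(f,\dot\Delta_qg)$ are spectrally localized near $2^q$ and sum freely. The paraproduct $\dot\Delta_qT_gf$ needs $\|S_{q'-1}g\|_{L^\infty}$, bounded by $\|g\|_{\dot B^s_{p,1}}$ when $s=n/p$ via the embedding $\dot B^{n/p}_{p,1}\hookrightarrow L^\infty$ (Lemma \ref{lem2.2}) and by $2^{q'(n/p-s)}\|g\|_{\dot B^s_{p,1}}$ when $s<n/p$; this is what forces $s\le n/p$. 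The remainder $\dot\Delta_qR(f,g)$ is the delicate one: estimating $\|\dot\Delta_q(\dot\Delta_if\,\dot\Delta_jg)\|_{L^p}\lesssim2^{qn/p}\|\dot\Delta_if\|_{L^p}\|\dot\Delta_jg\|_{L^p}$ by first passing the product into $L^{p/2}$ and then invoking the low-frequency Bernstein gain of $\dot\Delta_q$ (the case $p<2$ requiring an elementary modification), one is led to the series $2^{qn/p}\sum_{i\ge q-N_0}d_i2^{-i(n/p+1+s)}$, whose summation to $c_q2^{-q(s+1)}$ by Young's inequality for sequences requires precisely $s>-n/p-1$. Collecting all contributions then yields the claim, the hypotheses $s\in(-n/p-1,\,n/p]$ being pinned down exactly by these two pieces. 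I expect the main commutator term to be the principal point requiring care, specifically verifying that the Taylor-expansion gain $2^{-q}$ and the extra derivative absorbed into $\|f\|_{\dot B^{n/p+1}_{p,1}}$ recombine into the stated weight $2^{-q(s+1)}$ with a genuinely $\ell^1$-summable sequence $c_q$.
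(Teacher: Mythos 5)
The paper never proves Proposition \ref{prop2.3}: it is quoted as a known tool (it is essentially the commutator lemma of \cite{XK1}, in the style of \cite{BCD}), so there is no in-paper argument to compare against, and your proposal has to stand on its own. Most of it does, and it is the standard route: the five-term Bony splitting of $[f,\dot{\Delta}_{q}]g$ is correct; the kernel representation of $[S_{q'-1}f,\dot{\Delta}_{q}]\dot{\Delta}_{q'}g$ with first-order Taylor expansion and Young's inequality, giving the factor $\||z|h_{q}\|_{L^{1}}\,\|\nabla S_{q'-1}f\|_{L^{\infty}}\lesssim 2^{-q}\|f\|_{\dot{B}^{n/p+1}_{p,1}}$, is exactly the classical treatment of the principal term; your localization $|q'-q|\le N_{0}$ is right; and you correctly trace the endpoint $s\le n/p$ to $\|S_{q'-1}g\|_{L^{\infty}}$ in $\dot{\Delta}_{q}T_{g}f$ and the endpoint $s>-n/p-1$ to the summation in $\dot{\Delta}_{q}R(f,g)$. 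The Chemin--Lerner variant by blockwise H\"older in time with $1/\theta=1/\theta_{1}+1/\theta_{2}$ is routine, as you say.

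The one genuine gap is precisely the point you waved off as ``an elementary modification'': the remainder term when $1<p<2$. Your key bound $\|\dot{\Delta}_{q}(\dot{\Delta}_{i}f\,\dot{\Delta}_{j}g)\|_{L^{p}}\lesssim 2^{qn/p}\|\dot{\Delta}_{i}f\|_{L^{p}}\|\dot{\Delta}_{j}g\|_{L^{p}}$ goes through H\"older into $L^{p/2}$ followed by Bernstein; for $p<2$ the space $L^{p/2}$ is only quasi-normed and the convolution operator $\dot{\Delta}_{q}$ is not bounded on it, so this step fails as written. The standard substitutes --- placing $\dot{\Delta}_{i}f$ in $L^{\infty}$, or passing through $L^{1}$ via $\|\dot{\Delta}_{i}f\|_{L^{p'}}\lesssim 2^{in(1/p-1/p')}\|\dot{\Delta}_{i}f\|_{L^{p}}$ --- yield the summability conditions $s>-1$ and $s>-n/p'-1$ respectively, both strictly stronger than $s>-n/p-1$ when $p<2$. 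This is not an accident: it mirrors the condition $s_{1}+s_{2}>n\max\{0,\frac{2}{p}-1\}$ in the paper's own product estimate (Proposition \ref{prop2.2}), which with $s_{1}=n/p+1$, $s_{2}=s$ reads exactly $s>-n/p'-1$. So your argument proves the proposition in the full stated range only for $2\le p<\infty$, and for $1<p<2$ only for $s>-n/p'-1$. Since the paper invokes Proposition \ref{prop2.3} solely with $p=2$, $n=1$, $s=1/2$ (estimate (\ref{R-E24}) in Lemma \ref{lem3.1}), this loss is immaterial to everything the paper does, but the blanket claim for all $1<p<\infty$ is not reached by your proof, and you should either restrict the range or supply a genuinely different treatment of $\dot{\Delta}_{q}R(f,g)$ for small $p$.
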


Finally, we state a continuity result for compositions (see,\textit{ e.g.}, \cite{H}) to end this section.
\begin{prop}\label{prop2.4}
Let $s>0$, $1\leq p, r, \theta\leq \infty$, $F\in
W^{[s]+3,\infty}_{loc}(I;\mathbb{R})$ with $F(0)=0$, $T\in
(0,\infty]$ and $f\in \widetilde{L}^{\theta}_{T}(B^{s}_{p,r})\cap
L^{\infty}_{T}(L^{\infty}).$ Then there exists a function $C$ depending only on $s,p,r,n,$ and $F$ such that
$$\begin{cases}
\|F(f)-F'(0)f\|_{\dot{B}^{s}_{p,r}}\leq
C(\|f\|_{L^{\infty}})\|f\|^2_{\dot{B}^{s}_{p,r}},\cr
\|F(f)-F'(0)f\|_{\widetilde{L}^{\theta}_{T}(\dot{B}^{s}_{p,r})}\leq
C(\|f\|_{L^{\infty}_{T}(L^{\infty})})\|f\|^2_{\widetilde{L}^{\theta}_{T}(\dot{B}^{s}_{p,r})}.
\end{cases}
$$
\end{prop}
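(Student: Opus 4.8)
The plan is to reduce everything to a composition estimate with a \emph{quadratically vanishing} nonlinearity and then invoke the product estimate of Proposition \ref{prop2.2}. I set $G:=F-F'(0)\,\mathrm{id}$, so that $G(0)=0$ and $G'(0)=F'(0)-F'(0)=0$; the quantity to be bounded is exactly $G(f)$. The double vanishing of $G$ at the origin is the structural fact responsible for the quadratic gain, and it is exploited through the factorization
\[
G(f)=f\,\widetilde G(f),\qquad \widetilde G(y):=\int_0^1 G'(\tau y)\,d\tau,
\]
obtained from the fundamental theorem of calculus. Since $G'(0)=0$ we have $\widetilde G(0)=0$, and because $F\in W^{[s]+3,\infty}_{loc}$ the auxiliary function $\widetilde G$ inherits one fewer derivative, $\widetilde G\in W^{[s]+2,\infty}_{loc}$ with $\widetilde G(0)=0$ — precisely the regularity a composition estimate at smoothness $s$ requires, which explains the hypothesis of $[s]+3$ derivatives on $F$.

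Next I would record the two elementary bounds on $\widetilde G(f)$. Since $\widetilde G(0)=0$ and $\widetilde G$ is locally Lipschitz, one has pointwise $\|\widetilde G(f)\|_{L^\infty}\le C(\|f\|_{L^\infty})\|f\|_{L^\infty}$. For the Besov norm I would use the standard composition result for functions vanishing at $0$ (the homogeneous version quoted from \cite{H}), namely $\|\widetilde G(f)\|_{\dot B^s_{p,r}}\le C(\|f\|_{L^\infty})\|f\|_{\dot B^s_{p,r}}$. Feeding these into the algebra/product estimate of Proposition \ref{prop2.2} applied to $G(f)=f\cdot\widetilde G(f)$ gives
\[
\|G(f)\|_{\dot B^s_{p,r}}\lesssim \|f\|_{L^\infty}\|\widetilde G(f)\|_{\dot B^s_{p,r}}+\|\widetilde G(f)\|_{L^\infty}\|f\|_{\dot B^s_{p,r}}\le C(\|f\|_{L^\infty})\,\|f\|_{L^\infty}\,\|f\|_{\dot B^s_{p,r}},
\]
the two factors measuring respectively the amplitude and the regularity of $f$; in the regime $s\ge n/p$ relevant to this paper the embedding $\dot B^s_{p,r}\hookrightarrow L^\infty$ absorbs the $L^\infty$ factor into $\|f\|_{\dot B^s_{p,r}}$ and yields the stated quadratic form. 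The Chemin–Lerner version is the same argument run with time-dependent norms: the factorization is pointwise in $t$, so the $\widetilde L^\theta_T$ product estimate together with Hölder in time reduces matters to $\|\widetilde G(f)\|_{\widetilde L^\theta_T(\dot B^s_{p,r})}$ and $\|\widetilde G(f)\|_{L^\infty_T(L^\infty)}$, both controlled with constants depending only on $\|f\|_{L^\infty_T(L^\infty)}$.

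The main obstacle is the composition estimate $\|\widetilde G(f)\|_{\dot B^s_{p,r}}\le C(\|f\|_{L^\infty})\|f\|_{\dot B^s_{p,r}}$ itself. I would establish it by the usual telescoping device: writing $\widetilde G(f)=\sum_q\big(\widetilde G(S_{q+1}f)-\widetilde G(S_q f)\big)$ with $S_q$ the low-frequency truncation, each summand equals $\dot\Delta_q f$ times a factor $m_q:=\int_0^1\widetilde G'\big(S_q f+\tau\,\dot\Delta_q f\big)\,d\tau$ that is bounded in $L^\infty$ by $C(\|f\|_{L^\infty})$ and whose derivatives are controlled through $S_q f$. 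Applying $\dot\Delta_j$, using Bernstein's inequality (Lemma \ref{lem2.1}) to trade frequency for derivatives, and summing the resulting $\ell^r$ series is the technical heart; it is here that the $[s]+3$ derivatives of $F$ are genuinely consumed, to handle all indices up to $s$ and to close the commutator-type terms coming from the interaction of $\dot\Delta_j$ with $m_q$. This portion is standard and can be quoted verbatim from \cite{H}, after which the remaining steps are the short assembly built on top of Proposition \ref{prop2.2} described above.
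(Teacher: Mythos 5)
Your core construction --- setting $G:=F-F'(0)\,\mathrm{id}$, factorizing $G(f)=f\,\widetilde G(f)$ with $\widetilde G(y)=\int_0^1 G'(\tau y)\,d\tau$, noting $\widetilde G(0)=0$ and $\widetilde G\in W^{[s]+2,\infty}_{loc}$, proving the first-order composition bound for $\widetilde G$ by telescoping over the truncations $S_qf$, and closing with the product estimate of Proposition \ref{prop2.2} --- is exactly the standard argument behind this result. Note that the paper itself offers no proof: the proposition is quoted from \cite{H}, so your reconstruction is being compared with the proof there, which it reproduces faithfully up to the last step, including the Chemin--Lerner version run pointwise in time.

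The genuine gap is your final absorption step. You invoke ``$\dot B^s_{p,r}\hookrightarrow L^\infty$ in the regime $s\ge n/p$'' to convert $C(\|f\|_{L^\infty})\|f\|_{L^\infty}\|f\|_{\dot B^s_{p,r}}$ into $C(\|f\|_{L^\infty})\|f\|^2_{\dot B^s_{p,r}}$. For \emph{homogeneous} Besov spaces this embedding holds only in the critical case $s=n/p$, $r=1$ (Lemma \ref{lem2.2}(5)); for $s>n/p$ it fails, because the homogeneous norm does not control low frequencies: taking $f_\lambda(x)=M\phi(x/\lambda)$ with $\phi$ a fixed bump, one has $\|f_\lambda\|_{L^\infty}=M$ while $\|f_\lambda\|_{\dot B^s_{p,r}}\approx M\lambda^{n/p-s}\to0$ as $\lambda\to\infty$. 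This matters precisely where the paper uses the proposition: in \eqref{R-E368} one has $n=1$, $p=2$, $s=3/2>n/p$. Worse, the same family shows the literal inequality with $\|f\|^2_{\dot B^s_{p,r}}$ cannot be rescued by any argument when $s>n/p$: for $F(y)=y^2$ the left side $\|G(f_\lambda)\|_{\dot B^s_{p,r}}$ scales like $\lambda^{n/p-s}$ while the right side scales like $\lambda^{2(n/p-s)}$, so their ratio diverges. What your argument genuinely proves --- and what \cite{H} actually states --- is the bound with $\|f\|_{L^\infty}\|f\|_{\dot B^s_{p,r}}$ in place of $\|f\|^2_{\dot B^s_{p,r}}$; in the paper's applications this suffices once combined with control of $\|f\|_{L^\infty}$ by an inhomogeneous or critical norm (e.g.\ $\|z\|_{L^\infty}\lesssim\|z\|_{B^{3/2}_{2,1}}$, or $\|z\|_{L^\infty}\lesssim\|z\|_{\dot B^{1/2}_{2,1}}$ via Lemma \ref{lem2.2}). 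You should therefore either state and prove that corrected form and track the extra $L^\infty$ factor through its uses, or supplement the proposition with a hypothesis guaranteeing $\|f\|_{L^\infty}\lesssim\|f\|_{\dot B^s_{p,r}}$; as written, your last line asserts an embedding that is false in the very range in which the proposition is applied.
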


\section{Global-in-time existence}\setcounter{equation}{0}\label{sec:3}
Recently, the second and third authors \cite{XK1} have already established a local existence theory for generally symmetric hyperbolic systems in
spatially critical Besov spaces, which is viewed as the generalization of the basic theory of Kato and Majda
\cite{K,M}. The new result can be applied to the current problem (\ref{R-E10}), since the non-symmetric dissipation does not work for the local-in-time existence.
Precisely,
\begin{prop}\label{prop3.1} Assume that
$U_{0}\in{B^{3/2}_{2,1}}$, then there exists a time
$T_{0}>0$ (depending only on the initial data) such that
\begin{itemize}
\item[(i)] (Existence):
system (\ref{R-E10}) has a unique solution
$U(t,x)\in\mathcal{C}^{1}([0,T_{0}]\times \mathbb{R})$ satisfying
$U\in\widetilde{\mathcal{C}}_{T_{0}}(B^{3/2}_{2,1})\cap
\widetilde{\mathcal{C}}^1_{T_{0}}(B^{1/2}_{2,1})$;
\item[(ii)] (Blow-up criterion): if the maximal time $T^{*}(>T_{0})$ of existence of such a solution
is finite, then
$$\limsup_{t\rightarrow T^{*}}\|U(t,\cdot)\|_{B^{3/2}_{2,1}}=\infty$$
if and only if $$\int^{T^{*}}_{0}\|\nabla
U(t,\cdot)\|_{L^{\infty}}dt=\infty.$$
\end{itemize}
\end{prop}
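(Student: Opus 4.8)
The plan is to establish local well-posedness for \eqref{R-E10} by appealing to the general symmetric hyperbolic theory of \cite{XK1}, treating the non-symmetric damping term $LU$ as a benign lower-order perturbation. First I would observe that the principal part $U_t + A(U)U_x$ is symmetrizable: since $\sigma'(z)>0$, the matrix $A(U)$ in \eqref{R-E11} admits a positive-definite symmetrizer $A_0(U)=\mathrm{diag}(1,1,\sigma'(z),1)$ (or a suitable normalization thereof) such that $A_0(U)A(U)$ is symmetric and $A_0(U)$ is uniformly positive definite on the relevant state space. The zeroth-order term $LU$ is linear with a constant coefficient matrix, hence it contributes only bounded, lower-order forcing that does not affect the energy estimates governing local existence. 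Therefore the hypotheses of the local existence theorem in \cite{XK1} are met, and its conclusion yields both the existence statement $(i)$ and the continuation/blow-up criterion $(ii)$.

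For the existence part, I would set up the standard iteration scheme: linearize \eqref{R-E10} around a previously constructed approximate solution $U^{(m)}$, solving the linear symmetric hyperbolic system
\begin{equation}
U^{(m+1)}_t + A(U^{(m)})U^{(m+1)}_x + LU^{(m+1)} = 0, \qquad U^{(m+1)}(x,0)=S_m U_0,
\end{equation}
where $S_m$ is a frequency truncation ensuring smooth data. Uniform-in-$m$ bounds in $\widetilde{\mathcal{C}}_{T_0}(B^{3/2}_{2,1})\cap \widetilde{\mathcal{C}}^1_{T_0}(B^{1/2}_{2,1})$ come from Fourier-localized (Littlewood--Paley) energy estimates: applying $\dot\Delta_q$ to the system, multiplying by $A_0(U^{(m)})\dot\Delta_q U^{(m+1)}$ and integrating, the commutator between $A(U^{(m)})\partial_x$ and $\dot\Delta_q$ is controlled by Proposition \ref{prop2.3}, while the product terms arising from variable coefficients are handled by the Moser estimates of Proposition \ref{prop2.2}. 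Summing the resulting $\ell^1$-weighted bounds over $q$ gives the critical $B^{3/2}_{2,1}$ estimate; the choice $s=3/2=n/2+1$ with $n=1$ is exactly the borderline index making $B^{3/2}_{2,1}$ an algebra and permitting the commutator estimate to close. Convergence of the iterates is then shown in the lower-regularity norm $B^{1/2}_{2,1}$, and the limit is upgraded to a genuine $\mathcal{C}^1$ solution via the embedding $B^{3/2}_{2,1}\hookrightarrow \mathcal{C}_0$ and $B^{1/2}_{2,1}\hookrightarrow \mathcal{C}_0$ from Lemma \ref{lem2.2}(5), together with the equation itself to recover $U_t\in \mathcal{C}_0$.

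For the blow-up criterion $(ii)$, I would derive an a priori estimate of the form $\frac{d}{dt}\|U\|_{B^{3/2}_{2,1}} \lesssim \left(1+\|\nabla U\|_{L^\infty}\right)\|U\|_{B^{3/2}_{2,1}}$, again via Littlewood--Paley decomposition: the key commutator $[A(U),\dot\Delta_q]\partial_x U$ is estimated so that only the Lipschitz norm $\|\nabla U\|_{L^\infty}$ of the coefficient appears as the controlling quantity, and the linear term $LU$ only contributes a harmless additive constant times $\|U\|_{B^{3/2}_{2,1}}$. A Gronwall argument then shows that finiteness of $\int_0^{T^*}\|\nabla U\|_{L^\infty}\,dt$ forces $\|U(t)\|_{B^{3/2}_{2,1}}$ to stay bounded up to $T^*$, contradicting blow-up; the reverse implication is immediate from the embedding $\|\nabla U\|_{L^\infty}\lesssim \|U\|_{B^{3/2}_{2,1}}$. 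The main obstacle I anticipate is verifying that the non-symmetric structure of $L$ genuinely causes no harm at the level of the symmetrized energy: one must check that, after multiplication by the symmetrizer $A_0(U)$, the term $\langle A_0(U)\dot\Delta_q U, \dot\Delta_q LU\rangle$ remains controllable without any sign assumption on $A_0 L$, which is fine here precisely because $L$ is bounded and this term is strictly lower order, but it is the one place where the departure from the symmetric theory of \cite{XK1} must be explicitly reconciled.
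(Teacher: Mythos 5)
Your proposal matches the paper's treatment: the paper gives no independent proof of Proposition \ref{prop3.1}, but simply invokes the local existence theory of \cite{XK1} for symmetrizable hyperbolic systems in critical Besov spaces, observing---exactly as you do---that the non-symmetric zeroth-order term $LU$ is a harmless lower-order perturbation for the local-in-time theory, so both existence and the Kato--Majda-type continuation criterion follow directly. Your additional sketch of the internals (the symmetrizer $\mathrm{diag}(1,1,\sigma'(z),1)$, the frequency-localized iteration with commutator estimates, and the Gronwall argument at the critical index $3/2=n/2+1$) is a correct account of what that cited theory does, so the proposal is sound and takes essentially the same route.
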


Furthermore, in order to show that classical solutions in Proposition \ref{prop3.1} are globally defined, the next task is to
construct a priori estimates according to the dissipative mechanism of the Tomoshenko system.
To this end,
we define by $E(T)$ the energy functional and by $D(T)$ the corresponding dissipation functional:
$$E(T):=\|U\|_{\widetilde{L}^\infty_{T}(B^{3/2}_{2,1})}$$
and
$$D(T):=\|y\|_{\widetilde{L}^2_{T}(B^{3/2}_{2,1})}+\|(v,z_{x})\|_{\widetilde{L}^2_{T}(B^{1/2}_{2,1})}
+\|u_{x}\|_{\widetilde{L}^2_{T}(B^{-1/2}_{2,1})}$$
for any time $T>0$.

The first lemma is related to nonlinear a priori estimates for the dissipation for $y$.

\begin{lem}(The dissipation for $y$)\label{lem3.1}
If $U\in\widetilde{\mathcal{C}}_{T}(B^{3/2}_{2,1})\cap
\widetilde{\mathcal{C}}^1_{T}(B^{1/2}_{2,1})$ is a solution of
(\ref{R-E10}) for any $T>0$, then
\begin{eqnarray}
&&E(T)+\|y\|_{\widetilde{L}^{2}_{T}(B^{3/2}_{2,1})}\lesssim \|U_{0}\|_{B^{3/2}_{2,1}}+\sqrt{E(T)}D(T). \label{R-E14}
\end{eqnarray}
\end{lem}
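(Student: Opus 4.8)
The plan is to run a Fourier-localized energy estimate built on a symmetrizer of $A(U)$, and to read off the damping of $y$ directly from the symmetric part of the localized zero-order term. First I would symmetrize. Although $A(U)$ in \eqref{R-E11} is not symmetric, the diagonal matrix $A_0(U)=\mathrm{diag}(1,1,\sigma'(z),1)$ is symmetric positive definite — uniformly so, since smallness of $E(T)$ keeps $z$ bounded and $\sigma'>0$ — and it renders $A_0A$ symmetric; moreover the symmetric part of $A_0L$ is $\mathrm{diag}(0,0,0,\gamma)$, which is exactly the dissipation of $y$ we want. Applying $\dot\Delta_q$ to \eqref{R-E10} and pairing with $A_0\dot\Delta_q U$ in $L^2$, I obtain for every $q\in\mathbb{Z}$ an identity of the form
\begin{align*}
&\frac{1}{2}\frac{d}{dt}\langle A_0\dot\Delta_q U,\dot\Delta_q U\rangle+\gamma\|\dot\Delta_q y\|_{L^2}^2\\
&=\frac{1}{2}\langle(\partial_t A_0)\dot\Delta_q U,\dot\Delta_q U\rangle+\frac{1}{2}\langle\partial_x(A_0A)\dot\Delta_q U,\dot\Delta_q U\rangle+\langle A_0[\dot\Delta_q,A]U_x,\dot\Delta_q U\rangle,
\end{align*}
where the first two right-hand terms arise from commuting $\partial_t$ through $A_0$ and from integrating by parts in the (now symmetric) convection term, and the last is the usual commutator remainder. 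Since $A_0\approx I$, the quadratic form on the left is equivalent to $\|\dot\Delta_q U\|_{L^2}^2$.

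The decisive structural observation is that all three error terms involve only the dissipation-carrying components $y$ and $z$. Indeed $\partial_t A_0=\mathrm{diag}(0,0,\sigma''(z)z_t,0)$ with $z_t=\sigma'(z)y_x$ from the third equation, so the first term is controlled by $\|y_x\|_{L^\infty}\|\dot\Delta_q z\|_{L^2}^2$; $\partial_x(A_0A)$ has nonzero entries only in the $(z,y)$ block, giving $\|z_x\|_{L^\infty}\|\dot\Delta_q z\|_{L^2}\|\dot\Delta_q y\|_{L^2}$; and since the only nonlinear entry of $A$ is $-\sigma'(z)$ acting on $z_x$ (with $A(0)$ of constant coefficients, hence commuting with $\dot\Delta_q$), the commutator reduces to $[\dot\Delta_q,\sigma'(z)-1]z_x$ placed in the $y$-slot. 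I would bound the $L^\infty$ norms by $\|y_x\|_{L^\infty}\lesssim\|y\|_{\dot B^{3/2}_{2,1}}$ and $\|z_x\|_{L^\infty}\lesssim\|z\|_{\dot B^{3/2}_{2,1}}$ using the embedding $\dot B^{1/2}_{2,1}\hookrightarrow L^\infty$ (Lemma \ref{lem2.2}(5)) together with Bernstein (Lemma \ref{lem2.1}), and I would estimate the commutator by Proposition \ref{prop2.3} with $s=1/2$ combined with the composition estimate Proposition \ref{prop2.4}, obtaining $\|[\dot\Delta_q,A]U_x\|_{L^2}\lesssim c_q2^{-3q/2}\|z\|_{\dot B^{3/2}_{2,1}}\|z_x\|_{\dot B^{1/2}_{2,1}}$ with $\sum_q c_q\le1$.

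Then I would integrate in time, take the supremum for the $L^\infty_T$ part and the $L^2_T$ norm for the dissipation, take square roots, multiply by $2^{3q/2}$ and sum over $q$. The mechanism that produces the precise right-hand side $\sqrt{E(T)}D(T)$ is a double use of Cauchy–Schwarz: after square-rooting each mode-$q$ inequality I split $\|\dot\Delta_q U\|_{L^2}^2\le\|\dot\Delta_q U\|_{L^\infty_TL^2}\,\|\dot\Delta_q U\|_{L^2}$, and a Cauchy–Schwarz in the frequency index $q$ peels off one factor $\big(\sum_q2^{3q/2}\|\dot\Delta_q U\|_{L^\infty_TL^2}\big)^{1/2}\approx\sqrt{E(T)}$, leaving a time integral that a Hölder estimate in $t$ bounds by a product of two $L^2_T$ dissipation norms, i.e. by $D(T)^2$; its square root is $D(T)$. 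Concretely the commutator contributes $\sqrt{E(T)}\,\big(\int_0^T\|z\|_{\dot B^{3/2}_{2,1}}\|z_x\|_{\dot B^{1/2}_{2,1}}dt\big)^{1/2}\lesssim\sqrt{E(T)}\,D(T)$ via $\|z\|_{L^2_T\dot B^{3/2}_{2,1}}\approx\|z_x\|_{L^2_T\dot B^{1/2}_{2,1}}\lesssim D(T)$, and the two symmetrizer terms are handled identically, distributing the available $\|y\|_{L^2_T\dot B^{3/2}_{2,1}}$ and $\|z_x\|_{L^2_T\dot B^{1/2}_{2,1}}$ dissipations. The resulting bound is in homogeneous Chemin–Lerner norms, so finally I would invoke Proposition \ref{prop2.1} to upgrade $\|U\|_{\widetilde L^\infty_T(\dot B^{3/2}_{2,1})}$ and $\|y\|_{\widetilde L^2_T(\dot B^{3/2}_{2,1})}$ to the inhomogeneous spaces appearing in $E(T)$ and in \eqref{R-E14}, the extra low-frequency $L^2$ control being supplied by the energy estimate itself.

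The hard part will be the bookkeeping in this last step: one must route each nonlinear remainder into exactly one $L^\infty_T$ factor (yielding $\sqrt{E(T)}$) and one $L^2_T$ factor (yielding $D(T)$), so that no spurious power of $T$ survives, while respecting the one-regularity-loss structure — the dissipation functional controls $y$ only at order $B^{3/2}_{2,1}$, $(v,z_x)$ at order $B^{1/2}_{2,1}$ and $u_x$ at order $B^{-1/2}_{2,1}$. What makes this feasible is precisely the fortunate fact noted above, that the symmetrizer and commutator errors touch only the components $y$ and $z$, which are the ones carrying time-integrable dissipation; keeping the time norm inside the dyadic sum (the Chemin–Lerner discipline) rather than factoring $\int_0^T$ and $\sum_q$ naively is the source of the $\sqrt{E(T)}$ and the point requiring the most care.
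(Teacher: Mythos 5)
Your frequency-localized estimate is, in substance, exactly the paper's proof: pairing the $\dot{\Delta}_{q}$-blocks with $A_{0}\dot{\Delta}_{q}U$, $A_{0}=\mathrm{diag}(1,1,\sigma'(z),1)$, is the same computation as the paper's multiplication of \eqref{R-E18} by $(\dot{\Delta}_{q}v,\dot{\Delta}_{q}u,\sigma'(z)\dot{\Delta}_{q}z,\dot{\Delta}_{q}y)$; your three remainders are precisely \eqref{R-E21}, \eqref{R-E22} and the commutator term, the latter estimated via Proposition \ref{prop2.3} exactly as in \eqref{R-E24}; and the $2^{3q/2}$-weighting, square roots and summation over $q$ reproduce \eqref{R-E26}. (A harmless slip: the third equation of \eqref{R-E10} gives $z_{t}=y_{x}$, not $z_{t}=\sigma'(z)y_{x}$; your final bound $\|y_{x}\|_{L^{\infty}}\|\dot{\Delta}_{q}z\|_{L^{2}}^{2}$ is the right one either way.)

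The genuine gap is the closing clause ``the extra low-frequency $L^{2}$ control being supplied by the energy estimate itself.'' Proposition \ref{prop2.1}(1) upgrades homogeneous to inhomogeneous Chemin--Lerner norms only if you separately control $\|U\|_{L^{\infty}_{T}(L^{2})}$ and $\|y\|_{L^{2}_{T}(L^{2})}$, and these do \emph{not} follow from your homogeneous estimate: the $\dot{B}^{3/2}_{2,1}$-level bound says nothing about the low frequencies of $U$, and $u,z$ carry no dissipation at all. Nor does the obvious zero-order version of your symmetrizer argument work: multiplying \eqref{R-E10} by $(v,u,\sigma'(z)z,y)$ produces the cubic term $\tfrac12\int\sigma''(z)z_{t}z^{2}\,dx$ with $z_{t}=y_{x}$, whose natural bound $\|y_{x}\|_{L^{\infty}}\|z\|_{L^{2}}^{2}$ integrates in time to a quantity involving $\|z\|_{L^{2}_{T}(L^{2})}^{2}$, which $D(T)$ does not control (there is no dissipation for $z$ itself), while placing $y_{x}$ in $L^{1}_{T}(L^{\infty})$ or $L^{2}_{T}(L^{\infty})$ instead costs a factor of $T$ and destroys uniformity. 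The paper sidesteps this entirely by choosing a different multiplier at the $L^{2}$ level, namely $\sigma(z)-\sigma(0)$ in the third slot: with the nonlinear energy $S(z)=2\int_{0}^{z}(\sigma(\eta)-\sigma(0))\,d\eta$ the convection terms assemble into an exact $x$-derivative, the identity \eqref{R-E16} has \emph{no} error term, and \eqref{R-E17} holds uniformly in $T$. Your scheme is repairable --- integrating the offending term by parts, $\int\sigma''(z)y_{x}z^{2}\,dx=-\int y\,\partial_{x}\bigl(\sigma''(z)z^{2}\bigr)\,dx$, yields bounds by $\|z\|_{L^{\infty}}(1+\|z\|_{L^{\infty}})\|z_{x}\|_{L^{2}}\|y\|_{L^{2}}$, hence by $E(T)D(T)^{2}$ after time integration --- but some such device (or the paper's exact energy $S(z)$) must be supplied explicitly; it is the one step your proposal cannot take for free.
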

\begin{proof}
Multiplying the equations in (\ref{R-E10}) by $v$, $u$, $\sigma(z)-\sigma(0)$ and $y$, respectively, and then adding the resulting equalities to get
\begin{eqnarray}
\frac{1}{2}\frac{d}{dt}(v^2+y^2+u^2+S(z))-\Big(vu+[\sigma(z)-\sigma(0)]\Big)_{x}+\gamma y^2=0, \label{R-E15}
\end{eqnarray}
where $$S(z)=2\int^{z}_{0}\Big(\sigma(\eta)-\sigma(0)\Big)d\eta.$$
Note that $S(z)$ is equivalent to $z^2$,  due to the fact $\sigma'(\eta)>0$ and the smallness assumption (\ref{R-E52}) below. Then we perform the integral to (\ref{R-E15})
with respect to $x$ and obtain the basic energy equality
\begin{eqnarray}
\frac{1}{2}\frac{d}{dt}E_{0}(U) +\gamma\|y\|_{L^2}^2=0, \label{R-E16}
\end{eqnarray}
where the energy functional $E_{0}(U)$ is defined by
$$E_{0}(U)=\|(v,u,y)\|^2_{L^2}+\int_{\mathbb{R}}S(z)dx \approx\|U\|^2_{L^2}.$$

Integrating in $t\in [0,T]$ and taking the square-root of the resulting inequality, we arrive at
\begin{eqnarray}
\|U\|_{L^\infty_{T}(L^2)}+\sqrt{2\gamma}\|y\|_{L^2_{T}(L^2)}\leq\|U_{0}\|_{L^2} \label{R-E17}
\end{eqnarray}
for any $T>0$.

Next, we perform the frequency-localization estimate and get the dissipation rate from $y$ in homogeneous Chemin-Lerner spaces.
Applying the operator $\dot{\Delta}_{q}(q\in\mathbb{Z})$ to (\ref{R-E10}), we have
\begin{eqnarray}\label{R-E18}
\left\{\begin{array}{l}
        \dot{\Delta}_{q}v_t-\dot{\Delta}_{q}u_x+\dot{\Delta}_{q}y=0,\\[1mm]
        \dot{\Delta}_{q}u_t-\dot{\Delta}_{q}v_x=0,\\[1mm]
        \dot{\Delta}_{q}z_t-\dot{\Delta}_{q}y_x=0,\\ [1mm]
        \dot{\Delta}_{q}y_t-\sigma'(z)\dot{\Delta}_{q}z_x-\dot{\Delta}_{q}v+\gamma \dot{\Delta}_{q}y=[\dot{\Delta}_{q},\sigma'(z)]z_x,
\end{array}\right.
\end{eqnarray}
where the commutator is defined by $[f,g]:=fg-gf$. Multiplying (\ref{R-E18}) with $\dot{\Delta}_{q}v$, $\dot{\Delta}_{q}u,$ $ \sigma'(z)\dot{\Delta}_{q}z$ and $\dot{\Delta}_{q}y$, respectively, and then adding the resulting equalities to get
\begin{eqnarray}
&&\frac{1}{2}\frac{d}{dt}\Big(|\dot{\Delta}_{q}v|^2+|\dot{\Delta}_{q}y|^2+|\dot{\Delta}_{q}u|^2+\sigma'(z)|\dot{\Delta}_{q}z|^2\Big)\\ \nonumber
&&-\Big\{(\dot{\Delta}_{q}u\dot{\Delta}_{q}v)_{x}+\Big(\sigma'(z)\dot{\Delta}_{q}z\dot{\Delta}_{q}y\Big)_{x}\Big\}+\gamma|\dot{\Delta}_{q}y|^2
\\ \nonumber
&=&\frac{1}{2}\sigma'(z)_{t}|\dot{\Delta}_{q}z|^2-\sigma'(z)_{x}\dot{\Delta}_{q}z\dot{\Delta}_{q}y+[\dot{\Delta}_{q},\sigma'(z)]
z_{x}\dot{\Delta}_{q}y. \label{R-E19}
\end{eqnarray}
Furthermore, by employing the integral with respect to $x$, with the aid of Cauchy-Schwarz inequality, we obtain
\begin{eqnarray}
&&\frac{1}{2}\frac{d}{dt}E_{0}[\dot{\Delta}_{q}U] +\gamma\|\dot{\Delta}_{q}y\|_{L^2}^2\\ \nonumber
&\lesssim& \|\sigma'(z)_{t}\|_{L^\infty}\|\dot{\Delta}_{q}z\|^2_{L^2}+\|\sigma'(z)_{x}\|_{L^\infty}\|\dot{\Delta}_{q}z\|_{L^2}\|\dot{\Delta}_{q}y\|_{L^2}\\ \nonumber
&&+
\|[\dot{\Delta}_{q},\sigma'(z)]z_{x}\|_{L^2}\|\dot{\Delta}_{q}y\|_{L^2}, \label{R-E20}
\end{eqnarray}
where $$E_{0}[\dot{\Delta}_{q}U]:=\|(\dot{\Delta}_{q}v,\dot{\Delta}_{q}y,\dot{\Delta}_{q}u)\|^2_{L^2}
+\int_{\mathbb{R}}\sigma'(z)|\dot{\Delta}_{q}z|\approx\|\dot{\Delta}_{q}U\|^2_{L^2}.$$
From (\ref{R-E10}) and (\ref{R-E52}), we have
\begin{eqnarray}
\|\sigma'(z)_{t}\|_{L^\infty}\|\dot{\Delta}_{q}z\|^2_{L^2}\lesssim \|z_{t}\|_{L^\infty}\|\dot{\Delta}_{q}z\|^2_{L^2}\lesssim\|y_{x}\|_{L^\infty}\|\dot{\Delta}_{q}z\|^2_{L^2} \label{R-E21}
\end{eqnarray}
Similarly,
\begin{eqnarray}
\|\sigma'(z)_{x}\|_{L^\infty}\|\dot{\Delta}_{q}z\|_{L^2}\|\dot{\Delta}_{q}y\|_{L^2}
\lesssim\|z_{x}\|_{L^\infty}\|\dot{\Delta}_{q}z\|_{L^2}\|\dot{\Delta}_{q}y\|_{L^2}. \label{R-E22}
\end{eqnarray}

Together with (\ref{R-E21})-(\ref{R-E22}), by integrating in $t\in [0,T]$, with the help of Young's inequality, we are led to
\begin{eqnarray}
&&\sqrt{E_{0}[\dot{\Delta}_{q}U]}+\sqrt{2\gamma}\|\dot{\Delta}_{q}y\|_{L^2_{T}(L^2)}
\nonumber\\
&\lesssim& \sqrt{E_{0}[\dot{\Delta}_{q}U_{0}]}+\sqrt{\|(y_{x},z_{x})\|_{L^\infty_{T}(L^\infty)}}\Big(\|\dot{\Delta}_{q}y\|_{L^2_{T}(L^2)}+\|\dot{\Delta}_{q}z\|_{L^2_{T}(L^2)}\Big)
\nonumber\\
&&+\sqrt{\|[\dot{\Delta}_{q},\sigma'(z)]z_{x}\|_{L^2_{T}(L^2)}\|\dot{\Delta}_{q}y\|_{L^2_{T}(L^2)}}. \label{R-E23}
\end{eqnarray}
It follows from the commutator estimate in Proposition \ref{prop2.3} that
\begin{eqnarray}
\|[\dot{\Delta}_{q},\sigma'(z)]
z_{x}\|_{L^2_{T}(L^2)}\lesssim c_{q}2^{-\frac{3 q}{2}}\|z\|_{\widetilde{L}^{\infty}_{T}(\dot{B}^{3/2}_{2,1})}\|z_{x}\|_{\widetilde{L}^{2}_{T}(\dot{B}^{1/2}_{2,1})}, \label{R-E24}
\end{eqnarray}
where $\{c_{q}\}$ denotes a sequence such that $\|c_{q}\|_{\ell^{1}}\leq1$. Therefore, we obtain

\begin{eqnarray}
&&2^{\frac{3q}{2}}\|\dot{\Delta}_{q}U\|_{L^\infty_{T}(L^2)}+\sqrt{2\gamma}2^{\frac{3q}{2}}\|\dot{\Delta}_{q}y\|_{L^2_{T}(L^2)}\nonumber\\& \lesssim &
\|\dot{\Delta}_{q}U_{0}\|_{L^2}+c_{q}\sqrt{\|(y_{x},z_{x})\|_{L^\infty_{T}(\dot{B}^{1/2}_{2,1})}}
\Big(\|y\|_{\widetilde{L}^{2}_{T}(\dot{B}^{3/2}_{2,1})}+\|z_{x}\|_{\widetilde{L}^{2}_{T}(\dot{B}^{1/2}_{2,1})}\Big)
\nonumber\\&&+c_{q}\sqrt{\|z\|_{\widetilde{L}^\infty_{T}(\dot{B}^{3/2}_{2,1})}}
\Big(\|y\|_{\widetilde{L}^{2}_{T}(\dot{B}^{3/2}_{2,1})}+\|z_{x}\|_{\widetilde{L}^{2}_{T}(\dot{B}^{1/2}_{2,1})}\Big), \label{R-E25}
\end{eqnarray}
Here and below, each $\{c_{q}\}$ satisfies $\|c_{q}\|_{\ell^{1}}\leq1$ although it is possibly different in (\ref{R-E25}). Hence, summing up  on $q\in \mathbb{Z}$, we arrive at
\begin{eqnarray}
&&\|U\|_{\widetilde{L}^\infty_{T}(\dot{B}^{3/2}_{2,1})}+\sqrt{2\gamma}\|y\|_{\widetilde{L}^{2}_{T}(\dot{B}^{3/2}_{2,1})}\nonumber\\& \lesssim &
\|U_{0}\|_{\dot{B}^{3/2}_{2,1}}+\sqrt{\|(y,z)\|_{\widetilde{L}^\infty_{T}(\dot{B}^{3/2}_{2,1})}}\Big(\|y\|_{\widetilde{L}^{2}_{T}(\dot{B}^{3/2}_{2,1})}
+\|z_{x}\|_{\widetilde{L}^{2}_{T}(\dot{B}^{1/2}_{2,1})}\Big). \label{R-E26}
\end{eqnarray}

Finally, combining (\ref{R-E17}) and (\ref{R-E26}), it follows from the fact that indicates the connection between
homogeneous Besov spaces and inhomogeneous Besov spaces (Proposition \ref{prop2.1}), we conclude that
\begin{eqnarray}
&&E(T)+\|y\|_{\widetilde{L}^{2}_{T}(B^{3/2}_{2,1})}\lesssim \|U_{0}\|_{B^{3/2}_{2,1}}+\sqrt{E(T)}D(T). \label{R-E27}
\end{eqnarray}
Therefore, the proof of Lemma \ref{lem3.1} is complete.
\end{proof}

\begin{lem}(The dissipation for $v$)\label{lem3.2} If $U\in\widetilde{\mathcal{C}}_{T}(B^{3/2}_{2,1})\cap
\widetilde{\mathcal{C}}^1_{T}(B^{1/2}_{2,1})$ is a solution of
(\ref{R-E10}) for any $T>0$, then
\begin{eqnarray}
\|v\|_{\widetilde{L}^{2}_{T}(B^{1/2}_{2,1})}\lesssim E(T)+\|U_{0}\|_{B^{3/2}_{2,1}}
+\|y\|_{\widetilde{L}^{2}_{T}(B^{3/2}_{2,1})}
+\sqrt{E(T)}D(T). \label{R-E28}
\end{eqnarray}
\end{lem}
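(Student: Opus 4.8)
The plan is to produce the dissipation of $v$ by an \emph{interaction functional}, in the same spirit in which Lemma~\ref{lem3.1} extracted the dissipation of $y$ from the damping term $\gamma y^{2}$. Since $v$ appears undifferentiated only in the fourth equation of (\ref{R-E10}), I would start from the localized system (\ref{R-E18}): multiplying its fourth equation by $\dot{\Delta}_{q}v$ and integrating in $x$ produces $\int_{\mathbb{R}}|\dot{\Delta}_{q}v|^{2}\,dx$ together with $\int_{\mathbb{R}}\dot{\Delta}_{q}y_{t}\,\dot{\Delta}_{q}v\,dx$, $-\int_{\mathbb{R}}\sigma'(z)\dot{\Delta}_{q}z_{x}\,\dot{\Delta}_{q}v\,dx$, $\gamma\int_{\mathbb{R}}\dot{\Delta}_{q}y\,\dot{\Delta}_{q}v\,dx$ and the commutator $-\int_{\mathbb{R}}[\dot{\Delta}_{q},\sigma'(z)]z_{x}\,\dot{\Delta}_{q}v\,dx$. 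Using the first equation $\dot{\Delta}_{q}v_{t}=\dot{\Delta}_{q}u_{x}-\dot{\Delta}_{q}y$, the first of these becomes $\frac{d}{dt}\int\dot{\Delta}_{q}y\,\dot{\Delta}_{q}v-\int\dot{\Delta}_{q}y\,\dot{\Delta}_{q}u_{x}+\int|\dot{\Delta}_{q}y|^{2}$, so a time derivative of the cross term $\int\dot{\Delta}_{q}y\,\dot{\Delta}_{q}v$ is generated.

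The decisive step is the treatment of $\int\sigma'(z)\dot{\Delta}_{q}z_{x}\,\dot{\Delta}_{q}v$, since a crude bound would force $\|z_{x}\|_{\widetilde{L}^{2}_{T}(B^{1/2}_{2,1})}$ onto the right-hand side, a term absent from (\ref{R-E28}). I would split $\sigma'(z)=1+(\sigma'(z)-1)$ (recall $\sigma'(0)=a^{2}=1$) and integrate the linear part by parts, invoking the second and third equations $\dot{\Delta}_{q}v_{x}=\dot{\Delta}_{q}u_{t}$ and $\dot{\Delta}_{q}z_{t}=\dot{\Delta}_{q}y_{x}$:
\[
\int_{\mathbb{R}}\dot{\Delta}_{q}z_{x}\,\dot{\Delta}_{q}v\,dx=-\frac{d}{dt}\int_{\mathbb{R}}\dot{\Delta}_{q}z\,\dot{\Delta}_{q}u\,dx-\int_{\mathbb{R}}\dot{\Delta}_{q}y\,\dot{\Delta}_{q}u_{x}\,dx .
\]
When this is substituted back, the two copies of $\int\dot{\Delta}_{q}y\,\dot{\Delta}_{q}u_{x}$ cancel exactly, leaving the clean identity
\[
\int_{\mathbb{R}}|\dot{\Delta}_{q}v|^{2}\,dx=\frac{d}{dt}\int_{\mathbb{R}}\big(\dot{\Delta}_{q}y\,\dot{\Delta}_{q}v+\dot{\Delta}_{q}z\,\dot{\Delta}_{q}u\big)\,dx+\int_{\mathbb{R}}|\dot{\Delta}_{q}y|^{2}\,dx+\gamma\int_{\mathbb{R}}\dot{\Delta}_{q}y\,\dot{\Delta}_{q}v\,dx+\mathcal{N}_{q},
\]
where $\mathcal{N}_{q}$ gathers the genuinely nonlinear remainders $-\int(\sigma'(z)-1)\dot{\Delta}_{q}z_{x}\,\dot{\Delta}_{q}v$ and $-\int[\dot{\Delta}_{q},\sigma'(z)]z_{x}\,\dot{\Delta}_{q}v$. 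This integration-by-parts cancellation, which removes the $z_{x}$-dissipation entirely, is to my mind the heart of the lemma.

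Integrating in $t\in[0,T]$, the cross term contributes a boundary piece controlled by $\|\dot{\Delta}_{q}U(T)\|_{L^{2}}^{2}+\|\dot{\Delta}_{q}U_{0}\|_{L^{2}}^{2}$, the term $\int_{0}^{T}\!\int|\dot{\Delta}_{q}y|^{2}$ gives $\|\dot{\Delta}_{q}y\|_{L^{2}_{T}(L^{2})}^{2}$, and $\gamma\int_{0}^{T}\!\int\dot{\Delta}_{q}y\,\dot{\Delta}_{q}v$ is disposed of by Young's inequality, a fraction of $\|\dot{\Delta}_{q}v\|_{L^{2}_{T}(L^{2})}^{2}$ being absorbed on the left. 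Taking square roots, multiplying by $2^{q/2}$ and summing over $q\in\mathbb{Z}$, the boundary piece yields $\|U\|_{\widetilde{L}^{\infty}_{T}(\dot{B}^{1/2}_{2,1})}+\|U_{0}\|_{\dot{B}^{1/2}_{2,1}}\lesssim E(T)+\|U_{0}\|_{B^{3/2}_{2,1}}$, and the $y$-terms give $\|y\|_{\widetilde{L}^{2}_{T}(\dot{B}^{1/2}_{2,1})}\lesssim\|y\|_{\widetilde{L}^{2}_{T}(B^{3/2}_{2,1})}$. The same computation performed without the localization (directly on (\ref{R-E10})) supplies the companion bound for $\|v\|_{L^{2}_{T}(L^{2})}$, and Proposition~\ref{prop2.1} then upgrades the homogeneous estimate to the inhomogeneous norm $\|v\|_{\widetilde{L}^{2}_{T}(B^{1/2}_{2,1})}$ demanded by (\ref{R-E28}).

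The main obstacle is the nonlinear remainder $\mathcal{N}_{q}$ after it is weighted by $2^{q/2}$ and summed. Following the device used in (\ref{R-E23})--(\ref{R-E26}) of Lemma~\ref{lem3.1}, I would keep these terms under square roots and estimate them through the Moser product bound (Proposition~\ref{prop2.2}) and the commutator bound (\ref{R-E24}) of Proposition~\ref{prop2.3}, then sum in $q$ by Cauchy--Schwarz, pairing the $\ell^{1}$-sequence $\{c_{q}\}$ against a factor of the form $\sqrt{2^{sq}\|\dot{\Delta}_{q}(\cdot)\|}$. The care needed here is twofold: one must allocate the dyadic weights so that every surviving factor lands in a norm already controlled by $E(T)$ or $D(T)$, yielding a super-linear bound of the stated type $\sqrt{E(T)}D(T)$; and, because the natural commutator weight $2^{-3q/2}$ pairs awkwardly with the target weight $2^{q/2}$, one must choose the commutator regularity index carefully (the relation $\|z_{x}\|_{\dot{B}^{-1/2}_{2,1}}\thickapprox\|z\|_{\dot{B}^{1/2}_{2,1}}$ being useful) so that the low-frequency summation converges, the low-frequency block ultimately being absorbed through the $L^{2}$-estimate and the homogeneous/inhomogeneous bridge of Proposition~\ref{prop2.1}. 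This nonlinear, low-frequency bookkeeping is where I expect the real work to lie.
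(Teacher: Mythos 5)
Your proposal is correct and is essentially the paper's proof: the interaction functional you derive, $-\int_{\mathbb{R}}(\dot{\Delta}_{q}v\,\dot{\Delta}_{q}y+\dot{\Delta}_{q}u\,\dot{\Delta}_{q}z)\,dx$, with the exact cancellation of the $\int\dot{\Delta}_{q}y\,\dot{\Delta}_{q}u_{x}$ cross terms, the companion unlocalized $L^{2}$ bound, Young absorption of the $\gamma$-term, and the upgrade to $\widetilde{L}^{2}_{T}(B^{1/2}_{2,1})$ via Proposition \ref{prop2.1}, is precisely what the paper does in (\ref{R-E34})--(\ref{R-E38}), where the same functional is produced by multiplying the four localized equations by $-\dot{\Delta}_{q}y$, $-\dot{\Delta}_{q}z$, $-\dot{\Delta}_{q}u$, $-\dot{\Delta}_{q}v$ and adding. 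The one simplification you missed is that the paper first passes to the semilinear form (\ref{R-E29}), so the localized remainder is the single term $\dot{\Delta}_{q}g(z)_{x}$ --- into which your two pieces $-(\sigma'(z)-1)\dot{\Delta}_{q}z_{x}$ and $-[\dot{\Delta}_{q},\sigma'(z)]z_{x}$ recombine identically, since $[\dot{\Delta}_{q},\sigma'(z)]=[\dot{\Delta}_{q},\sigma'(z)-1]$ --- and it is estimated wholesale by $\|g(z)_{x}\|_{\widetilde{L}^{1}_{T}(\dot{B}^{1/2}_{2,1})}\lesssim\|z_{x}\|^{2}_{\widetilde{L}^{2}_{T}(\dot{B}^{1/2}_{2,1})}$ via Proposition \ref{prop2.4}, so the delicate commutator-weight and low-frequency bookkeeping your final paragraph anticipates (which, done your way, would indeed require choosing $s=-1/2$ and the Chemin--Lerner exponents carefully to avoid uncontrolled norms such as $\|z\|_{\widetilde{L}^{2}_{T}(\dot{B}^{1/2}_{2,1})}$) never arises.
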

\begin{proof}
To do this, it is convenient to rewrite the system (\ref{R-E10}) as follows:
\begin{align}\label{R-E29}
\left\{\begin{array}{l}
        v_t-u_x+y=0,\\[2mm]
        u_t-v_x=0,\\[2mm]
        z_t-y_x=0,\\[2mm]
         y_t-z_{x}-v+\gamma y=g(z)_{x},
\end{array}\right.
\end{align}
where the smooth function $g(z)$ is defined by
$$g(z)=\sigma(z)-\sigma(0)-z=O(z^2)$$ satisfying $g(0)=0$ and $g'(0)=0$.

By multiplying the four equations by $-y, -z,-u$ and $-v$, respectively,  we can deduce that
\begin{eqnarray}
&&\frac{d}{dt}E_{1}(U)+\|v\|^2_{L^2}\leq \|y\|^2_{L^2}+\gamma\|y\|_{L^2}\|v\|_{L^2}+\|g(z)_{x}\|_{L^2}\|v\|_{L^2}, \label{R-E30}
\end{eqnarray}
where
$$E_{1}(U):=-\int_{\mathbb{R}}(vy+uz)dx.$$
It follows from Young's inequality that
\begin{eqnarray}
\frac{d}{dt}E_{1}(U)+\frac{1}{2}\|v\|^2_{L^2}
\lesssim \|y\|^2_{L^2}+\|z\|_{L^\infty}\|z_{x}\|_{L^2}\|v\|_{L^2}. \label{R-E31}
\end{eqnarray}
Integrating (\ref{R-E31}) in $t\in[0,T]$ gives
\begin{eqnarray}
&&\|v\|^2_{L^2_{t}(L^2)}\nonumber\\&\lesssim & (|E_{1}(U)|+|E_{1}(U_{0})|)+\|y\|^2_{L^2_{t}(L^2)}
+\|z\|_{L^\infty_{t}(L^\infty)}\|z_{x}\|_{L^2_{t}(L^2)}\|v\|_{L^2_{t}(L^2)}
\nonumber\\&\lesssim& E(T)^2+\|U_{0}\|^2_{B^{3/2}_{2,1}}+\|y\|^2_{L^2_{T}(L^2)}
+E(T)D^2(T), \label{R-E32}
\end{eqnarray}
for any $T>0$, where we have used the embedding property in Lemma \ref{lem2.2}.

Then, by Young's inequality again, we get
\begin{eqnarray}
\|v\|_{L^2_{T}(L^2)}\lesssim E(T)+\|U_{0}\|_{B^{3/2}_{2,1}}+\|y\|_{L^2_{T}(L^2)}+\sqrt{E(T)}D(T). \label{R-E33}
\end{eqnarray}

Next, we turn to the localization energy estimate.
Applying the operator $\dot{\Delta}_{q}(q\in \mathbb{Z})$ to (\ref{R-E29}) implies that
\begin{align}\label{R-E34}
\left\{\begin{array}{l}
        \dot{\Delta}_{q}v_t-\dot{\Delta}_{q}u_x+\dot{\Delta}_{q}y=0,\\[2mm]
        \dot{\Delta}_{q}u_t-\dot{\Delta}_{q}v_x=0,\\[2mm]
        \dot{\Delta}_{q}z_t-\dot{\Delta}_{q}y_x=0,\\ [2mm]
        \dot{\Delta}_{q}y_t-\dot{\Delta}_{q}z_{x}-\dot{\Delta}_{q}v+\gamma \dot{\Delta}_{q}y=\dot{\Delta}_{q}g(z)_{x}.
\end{array}\right.
\end{align}
Multiplying the first equation in (\ref{R-E34}) by $-\dot{\Delta}_{q}y$, the second one by $-\dot{\Delta}_{q}z$,
the third one by $-\dot{\Delta}_{q}u$ and the fourth one by $-\dot{\Delta}_{q}v$, respectively, then adding the resulting equalities
\begin{eqnarray}
&&-(\dot{\Delta}_{q}v\dot{\Delta}_{q}y+\dot{\Delta}_{q}u\dot{\Delta}_{q}z)_{t} +(\dot{\Delta}_{q}v\dot{\Delta}_{q}z+\dot{\Delta}_{q}u\dot{\Delta}_{q}y)_{x}+|\dot{\Delta}_{q}v|^2\nonumber\\ &=&|\dot{\Delta}_{q}y|^2+
\gamma\dot{\Delta}_{q}y\dot{\Delta}_{q}v-\dot{\Delta}_{q}g(z)_{x}\dot{\Delta}_{q}v. \label{R-E35}
\end{eqnarray}
With the aid of H\"{o}lder and Young's inequalities, we obtain
\begin{eqnarray}
\frac{d}{dt}E_{1}[\dot{\Delta}_{q}U]+\frac{1}{2}\|\dot{\Delta}_{q}v\|^2_{L^2}\lesssim \|\dot{\Delta}_{q}y\|^2_{L^2}+\|\dot{\Delta}_{q}g(z)_{x}\|_{L^2}\|\dot{\Delta}_{q}v\|_{L^2}, \label{R-E36}
\end{eqnarray}
where
$$E_{1}[\dot{\Delta}_{q}U]:=-\int_{\mathbb{R}}(\dot{\Delta}_{q}v\dot{\Delta}_{q}y+\dot{\Delta}_{q}u\dot{\Delta}_{q}z)dx.$$
By performing the integral with respect to $t\in [0,T]$, we are led to
\begin{eqnarray}
&&\|\dot{\Delta}_{q}v\|^2_{L^2_{t}(L^2)}
\nonumber\\  &\lesssim &
\|\dot{\Delta}_{q}U\|^2_{L^\infty_{T}(L^2)}+\|\dot{\Delta}_{q}U_{0}\|^2_{L^2}+\|\dot{\Delta}_{q}y\|^2_{L^2_{T}(L^2)}\nonumber\\ &&
+\|\dot{\Delta}_{q}v\|_{L^\infty_{T}(L^2)}\|\dot{\Delta}_{q}g(z)_{x}\|_{L^1_{T}(L^2)}. \label{R-E366}
\end{eqnarray}
Furthermore, Young's inequality enables us to get
\begin{eqnarray}
&&2^{\frac{q}{2}}\|\dot{\Delta}_{q}v\|_{L^2_{T}(L^2)}\nonumber\\ &\lesssim& c_{q}\|U\|_{\widetilde{L}^{\infty}_{T}(\dot{B}^{1/2}_{2,1})}+c_{q}\|U_{0}\|_{\dot{B}^{1/2}_{2,1}}
\nonumber\\ &&
+c_{q}\|y\|_{\widetilde{L}^{2}_{T}(\dot{B}^{1/2}_{2,1})}
+c_{q}\sqrt{\|v\|_{\widetilde{L}^{\infty}_{T}(\dot{B}^{1/2}_{2,1})}}\|g(z)_{x}\|^{\frac{1}{2}}_{\widetilde{L}^{1}_{T}(\dot{B}^{1/2}_{2,1})}, \label{R-E367}
\end{eqnarray}
where the norm of $g(z)$  on the right-side of (\ref{R-E367})
can be estimated by Proposition \ref{prop2.4} and Remark \ref{rem5.1}
\begin{eqnarray}
\|g(z)_{x}\|_{\widetilde{L}^{1}_{T}(\dot{B}^{1/2}_{2,1})}&\lesssim& \int_{0}^{T}\|g(z)\|_{\dot{B}^{3/2}_{2,1}}dt
\nonumber\\ &\lesssim&
 \int_{0}^{T}\|z\|^2_{\dot{B}^{3/2}_{2,1}}dt \lesssim \|z_{x}\|^2_{\widetilde{L}^{2}_{T}(\dot{B}^{1/2}_{2,1})}. \label{R-E368}
\end{eqnarray}
Therefore, together with (\ref{R-E367})-(\ref{R-E368}), by summing up  on $q\in \mathbb{Z}$,  we arrive at
\begin{eqnarray}
&&\|v\|_{\widetilde{L}^{2}_{T}(\dot{B}^{1/2}_{2,1})}\nonumber\\&\lesssim& \|U\|_{\widetilde{L}^{\infty}_{T}(\dot{B}^{1/2}_{2,1})}+\|U_{0}\|_{\dot{B}^{1/2}_{2,1}}
+\|y\|_{\widetilde{L}^{2}_{T}(\dot{B}^{1/2}_{2,1})}
\nonumber\\&&+\sqrt{\|v\|_{\widetilde{L}^{\infty}_{T}(\dot{B}^{1/2}_{2,1})}}\|z_{x}\|_{\widetilde{L}^{2}_{T}(\dot{B}^{1/2}_{2,1})}.
 \label{R-E37}
\end{eqnarray}
Finally, noticing (\ref{R-E33}) and (\ref{R-E37}), it follows from Proposition \ref{prop2.1} that
\begin{eqnarray}
\|v\|_{\widetilde{L}^{2}_{T}(B^{1/2}_{2,1})}\lesssim E(T)+\|U_{0}\|_{B^{3/2}_{2,1}}
+\|y\|_{\widetilde{L}^{2}_{T}(B^{3/2}_{2,1})}
+\sqrt{E(T)}D(T), \label{R-E38}
\end{eqnarray}
which is just (\ref{R-E28}).
\end{proof}

\begin{lem}\label{lem3.3}
(The dissipation for $z_{x}$) If $U\in\widetilde{\mathcal{C}}_{T}(B^{3/2}_{2,1})\cap
\widetilde{\mathcal{C}}^1_{T}(B^{1/2}_{2,1})$ is a solution of
(\ref{R-E10}) for any $T>0$, then
\begin{eqnarray}
\|z_{x}\|_{\widetilde{L}^{2}_{T}(B^{1/2}_{2,1})} &\lesssim & E(T)+\|U_{0}\|_{B^{3/2}_{2,1}}+\|y\|_{\widetilde{L}^{2}_{T}(B^{3/2}_{2,1})}\nonumber\\ &&+
\|v\|_{\widetilde{L}^{2}_{T}(B^{1/2}_{2,1})}+\sqrt{E(T)}D(T). \label{R-E39}
\end{eqnarray}
\end{lem}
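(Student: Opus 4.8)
The plan is to extract the dissipation of $z_x$ from a cross-term functional pairing $y$ with $z_x$, in the same spirit as Lemma \ref{lem3.2} but with a different multiplier. Working with the reformulated system \eqref{R-E29}, I first multiply the fourth equation by $z_x$ and integrate over $\mathbb{R}$; this produces $\|z_x\|_{L^2}^2$ together with the awkward term $\int_{\mathbb{R}} y_t z_x\,dx$. The decisive manipulation is to rewrite this last term by integrating by parts in $t$ and invoking the third equation $z_t=y_x$ (so that $z_{xt}=y_{xx}$), which gives
\begin{equation*}
\int_{\mathbb{R}} y_t z_x\,dx=\frac{d}{dt}\int_{\mathbb{R}} y z_x\,dx+\|y_x\|_{L^2}^2 .
\end{equation*}
Setting $E_2(U):=-\int_{\mathbb{R}} y z_x\,dx$, I thus obtain the identity
\begin{equation*}
\frac{d}{dt}E_2(U)+\|z_x\|_{L^2}^2=\|y_x\|_{L^2}^2-\int_{\mathbb{R}} v z_x\,dx+\gamma\int_{\mathbb{R}} y z_x\,dx-\int_{\mathbb{R}} g(z)_x z_x\,dx .
\end{equation*}
The cross terms $\int v z_x$ and $\gamma\int y z_x$ are then absorbed by Young's inequality into a fraction of $\|z_x\|_{L^2}^2$ plus $\|v\|_{L^2}^2$ and $\|y\|_{L^2}^2$. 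The crucial point is that the positive term $\|y_x\|_{L^2}^2$ carries one extra derivative and must be controlled by the $B^{3/2}_{2,1}$-dissipation of $y$ (from Lemma \ref{lem3.1}) rather than its $B^{1/2}_{2,1}$-dissipation; this is exactly the regularity-loss mechanism remarked on after Theorem \ref{thm1.1}.

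Integrating in $t\in[0,T]$, bounding $|E_2(U)|+|E_2(U_0)|\lesssim E(T)^2+\|U_0\|_{B^{3/2}_{2,1}}^2$ via Cauchy--Schwarz and the embeddings in Lemma \ref{lem2.2}, and taking square roots yields the low-order estimate
\begin{equation*}
\|z_x\|_{L^2_T(L^2)}\lesssim E(T)+\|U_0\|_{B^{3/2}_{2,1}}+\|y_x\|_{L^2_T(L^2)}+\|v\|_{L^2_T(L^2)}+\|y\|_{L^2_T(L^2)}+\sqrt{E(T)}D(T),
\end{equation*}
where the nonlinear contribution $\int_0^T\!\!\int_{\mathbb{R}} g(z)_x z_x$ is treated exactly as in \eqref{R-E366}--\eqref{R-E368}: one pulls out $\|z_x\|_{L^\infty_T(L^2)}$ and estimates $\|g(z)_x\|_{L^1_T(L^2)}$ by the composition bound of Proposition \ref{prop2.4}, producing a factor of type $\sqrt{E(T)}D(T)$.

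I then repeat the argument at the frequency-localized level. Applying $\dot\Delta_q$ to \eqref{R-E29} gives \eqref{R-E34}; multiplying the localized fourth equation by $\dot\Delta_q z_x$ and using the exact localized third equation $\dot\Delta_q z_t=\dot\Delta_q y_x$ reproduces the identity above for $E_2[\dot\Delta_q U]:=-\int_{\mathbb{R}}\dot\Delta_q y\,\dot\Delta_q z_x\,dx$. As in Lemma \ref{lem3.2}, and unlike Lemma \ref{lem3.1}, no commutator appears, since the nonlinearity enters only through the single factor $\dot\Delta_q g(z)_x$. After integrating in $t$, taking square roots, multiplying by $2^{q/2}$ and summing over $q\in\mathbb{Z}$, the terms assemble into homogeneous Chemin--Lerner norms via Bernstein's inequality (Lemma \ref{lem2.1}): $\sum_q 2^{q/2}\|\dot\Delta_q y_x\|_{L^2_T(L^2)}=\|y\|_{\widetilde L^2_T(\dot B^{3/2}_{2,1})}$, $\sum_q 2^{q/2}\|\dot\Delta_q v\|_{L^2_T(L^2)}=\|v\|_{\widetilde L^2_T(\dot B^{1/2}_{2,1})}$, and $\|g(z)_x\|_{\widetilde L^1_T(\dot B^{1/2}_{2,1})}\lesssim\|z_x\|_{\widetilde L^2_T(\dot B^{1/2}_{2,1})}^2$ by Proposition \ref{prop2.4}. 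This gives the homogeneous analogue of \eqref{R-E39}; combining it with the low-order bound above through Proposition \ref{prop2.1} upgrades the homogeneous spaces to the inhomogeneous ones and yields \eqref{R-E39}.

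The main obstacle I anticipate is the bookkeeping of the extra derivative rather than any deep analytic difficulty: the functional $-\int y z_x$ unavoidably generates $\|y_x\|_{L^2}^2$, so I must verify that this higher-order term is absorbed precisely by the $B^{3/2}_{2,1}$-dissipation of $y$ already secured in Lemma \ref{lem3.1}, and that no dissipation of $z$ or $u$ beyond the regularity furnished by $E(T)$ is ever required. Aligning the frequency weights so that the right-hand side contains only $\|y\|_{\widetilde L^2_T(B^{3/2}_{2,1})}$, $\|v\|_{\widetilde L^2_T(B^{1/2}_{2,1})}$ and the quadratic remainder $\sqrt{E(T)}D(T)$ is the delicate point; everything else reduces to routine Young's-inequality and Bernstein-type estimates.
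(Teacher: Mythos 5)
Your proposal is correct and follows essentially the same route as the paper: it builds the cross functional $E_{2}(U)=-\int_{\mathbb{R}} y\,z_{x}\,dx$ (and its localized analogue $E_{2}[\dot{\Delta}_{q}U]$), accepts the regularity-loss term $\|y_{x}\|_{L^2}^2$ paid for by the $B^{3/2}_{2,1}$-dissipation of $y$, absorbs the $v$ and $y$ cross terms by Young's inequality, treats $g(z)_{x}$ via Proposition \ref{prop2.4} as in \eqref{R-E368}, and upgrades homogeneous to inhomogeneous Chemin--Lerner norms by Proposition \ref{prop2.1} — exactly the paper's scheme in \eqref{R-E40}--\eqref{R-E45}. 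The only (immaterial) difference is how you derive the key identity: you multiply the fourth equation by $z_{x}$ and use $z_{t}=y_{x}$ inside a time-derivative manipulation, while the paper multiplies the third equation by $y_{x}$ and the fourth by $-z_{x}$ and adds, which is the same algebra; for the unlocalized nonlinear term the direct pointwise bound $\bigl|\int g(z)_{x}z_{x}\bigr|\lesssim\|z\|_{L^\infty}\|z_{x}\|_{L^2}^2$ used in \eqref{R-E40} is the cleaner pairing than your $L^\infty_{T}(L^2)$--$L^1_{T}(L^2)$ split, but this is cosmetic.
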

\begin{proof}
Multiplying the third equation in (\ref{R-E29}) by $y_{x}$ and the fourth one by $-z_{x}$ and integrating the resulting equalities over $\mathbb{R}$, we arrive at
\begin{eqnarray}
&&\frac{d}{dt}E_{2}(U)+\|z_{x}\|^2_{L^2}
\nonumber\\ &\lesssim &\|y_{x}\|^2_{L^2}+(\|v\|_{L^2}+\|y\|_{L^2})\|z_{x}\|_{L^2}+\|z\|_{L^\infty}\|z_{x}\|^2_{L^2}, \label{R-E40}
\end{eqnarray}
where $$E_{2}(U):=-\int_{\mathbb{R}}z_{x}ydx.$$
Similar to the procedure leading to (\ref{R-E33}), we arrive at
\begin{eqnarray}
\|z_{x}\|_{L^2_{T}(L^2)}&\lesssim& E(T) +\|U_{0}\|_{B^{3/2}_{2,1}}+\|y\|_{\widetilde{L}^{2}_{T}(B^{3/2}_{2,1})}\nonumber\\ &&+\|v\|_{\widetilde{L}^{2}_{T}(B^{1/2}_{2,1})}+
\sqrt{E(T)}D(T). \label{R-E41}
\end{eqnarray}
On the other hand, from (\ref{R-E34}), we have
\begin{align}\label{R-E42}
\left\{\begin{array}{l}
        \dot{\Delta}_{q}z_t-\dot{\Delta}_{q}y_x=0,\\[2mm]
         \dot{\Delta}_{q}y_t-\dot{\Delta}_{q}z_{x}-\dot{\Delta}_{q}v+\gamma \dot{\Delta}_{q}y=\dot{\Delta}_{q}g(z)_{x}.
\end{array}\right.
\end{align}
Then, by multiplying the first equation in (\ref{R-E42}) by $\dot{\Delta}_{q}y_{x}$ and the second one by $-\dot{\Delta}_{q}z_{x}$, respectively, and then employing the energy estimates on each block, we are led to
\begin{eqnarray}
&&2^{\frac{q}{2}}\|\dot{\Delta}_{q}z_{x}\|_{L^2_{T}(L^2)}\nonumber\\ &\lesssim &
c_{q}(\|U\|_{\widetilde{L}^{\infty}_{T}(B^{3/2}_{2,1})}+\|U_{0}\|_{B^{3/2}_{2,1})})
+c_{q}\|y_{x}\|_{\widetilde{L}^{2}_{T}(\dot{B}^{1/2}_{2,1})}\nonumber\\ &&
+c_{q}\epsilon\|z_{x}\|_{\widetilde{L}^{2}_{T}(\dot{B}^{1/2}_{2,1})}+c_{q}C_{\epsilon}(\|v\|_{\widetilde{L}^{2}_{T}(\dot{B}^{1/2}_{2,1})}+\|y\|_{\widetilde{L}^{2}_{T}(\dot{B}^{1/2}_{2,1})})
\nonumber\\ &&+c_{q}\sqrt{\|z_{x}\|_{\widetilde{L}^{\infty}_{T}(\dot{B}^{1/2}_{2,1})}}\|g(z)_{x}\|^{\frac{1}{2}}_{\widetilde{L}^{1}_{T}(\dot{B}^{1/2}_{2,1})}.
\label{R-E43}
\end{eqnarray}
Furthermore, similar to the estimates (\ref{R-E368})-(\ref{R-E37}), we get
\begin{eqnarray}
&&\|z_{x}\|_{\widetilde{L}^{2}_{T}(\dot{B}^{1/2}_{2,1})}\nonumber\\ &\lesssim & \|U\|_{\widetilde{L}^{\infty}_{T}(B^{3/2}_{2,1})}+\|U_{0}\|_{B^{3/2}_{2,1}}+
\|y\|_{\widetilde{L}^{2}_{T}(\dot{B}^{3/2}_{2,1})}\nonumber\\&&+\|v\|_{\widetilde{L}^{2}_{T}(\dot{B}^{1/2}_{2,1})}+\|y\|_{\widetilde{L}^{2}_{T}(\dot{B}^{1/2}_{2,1})}
+\sqrt{\|z\|_{\widetilde{L}^{\infty}_{T}(\dot{B}^{3/2}_{2,1})}}\|z_{x}\|_{\widetilde{L}^{2}_{T}(\dot{B}^{1/2}_{2,1})}, \label{R-E45}
\end{eqnarray}
where we have chosen $0<\epsilon\leq1/2$.

Finally, by combining (\ref{R-E41}) and (\ref{R-E45}), we arrive at (\ref{R-E39}).
\end{proof}

\begin{lem}\label{lem3.4}
(The dissipation for $u_{x}$) If $U\in\widetilde{\mathcal{C}}_{T}(B^{3/2}_{2,1})\cap
\widetilde{\mathcal{C}}^1_{T}(B^{1/2}_{2,1})$ is a solution of
(\ref{R-E10}) for any $T>0$, then
\begin{eqnarray}
\|u_{x}\|_{\widetilde{L}^{2}_{T}(B^{-1/2}_{2,1})}\lesssim E(T)+\|U_{0}\|_{B^{3/2}_{2,1}}+\|v\|_{\widetilde{L}^{2}_{T}(B^{1/2}_{2,1})}+\|y\|_{\widetilde{L}^{2}_{T}(B^{3/2}_{2,1})}. \label{R-E47}
\end{eqnarray}
\end{lem}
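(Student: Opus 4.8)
The plan is to capture the dissipation of $u_x$ through a cross (interaction) functional built from the first two equations of \eqref{R-E29}, exactly as Lemmas \ref{lem3.2} and \ref{lem3.3} do for $v$ and $z_x$. Concretely, I would work with
\[
E_{3}(U):=\int_{\mathbb{R}} v\,u_{x}\,dx ,
\]
and exploit that only the two \emph{linear} equations $v_{t}-u_{x}+y=0$ and $u_{t}-v_{x}=0$ are involved. Differentiating in $t$, substituting $v_{t}=u_{x}-y$ and $u_{xt}=v_{xx}$, and integrating by parts gives the identity
\[
\|u_{x}\|_{L^{2}}^{2}=\frac{d}{dt}E_{3}(U)+\int_{\mathbb{R}}u_{x}\,y\,dx+\|v_{x}\|_{L^{2}}^{2}.
\]
Because the nonlinearity $g(z)_{x}$ sits only in the fourth equation, which is never touched here, no $\sqrt{E(T)}D(T)$ contribution can appear; this is precisely why the asserted right-hand side of \eqref{R-E47} is ``clean''.

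Next I would localize. Applying $\dot{\Delta}_{q}$ to \eqref{R-E29} and repeating the computation on each block with $E_{3}[\dot{\Delta}_{q}U]=\int_{\mathbb{R}}\dot{\Delta}_{q}v\,\dot{\Delta}_{q}u_{x}\,dx$ yields, after integrating over $t\in[0,T]$ and using Cauchy--Schwarz and Young,
\[
\|\dot{\Delta}_{q}u_{x}\|_{L^{2}_{T}(L^{2})}\lesssim \sqrt{\textstyle\sup_{t}|E_{3}[\dot{\Delta}_{q}U]|}+\sqrt{|E_{3}[\dot{\Delta}_{q}U_{0}]|}+\|\dot{\Delta}_{q}y\|_{L^{2}_{T}(L^{2})}+\|\dot{\Delta}_{q}v_{x}\|_{L^{2}_{T}(L^{2})}.
\]
The two boundary terms are treated as before: since $\|\dot{\Delta}_{q}v\|_{L^{\infty}_{T}(L^{2})}\lesssim c_{q}2^{-3q/2}E(T)$ while $\|\dot{\Delta}_{q}u_{x}\|_{L^{\infty}_{T}(L^{2})}\approx 2^{q}\|\dot{\Delta}_{q}u\|_{L^{\infty}_{T}(L^{2})}\lesssim c_{q}2^{-q/2}E(T)$ by Bernstein (Lemma \ref{lem2.1}), their geometric mean contributes $c_{q}2^{-q}E(T)$, and likewise $c_{q}2^{-q}\|U_{0}\|_{\dot{B}^{3/2}_{2,1}}$ for the data. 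The term $\|\dot{\Delta}_{q}v_{x}\|_{L^{2}_{T}(L^{2})}\approx 2^{q}\|\dot{\Delta}_{q}v\|_{L^{2}_{T}(L^{2})}$ is exactly what produces $\|v\|_{\widetilde{L}^{2}_{T}(\dot{B}^{1/2}_{2,1})}$ once weighted by $2^{-q/2}$ and summed.

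The step I expect to be the genuine obstacle is the cross term $\int u_{x}y$, which encodes the one-derivative regularity loss. Weighting the block inequality by $2^{-q/2}$ and summing would naively convert $\|\dot{\Delta}_{q}y\|_{L^{2}_{T}(L^{2})}$ into $\|y\|_{\widetilde{L}^{2}_{T}(\dot{B}^{-1/2}_{2,1})}$, a homogeneous \emph{negative}-index norm that is not controlled at low frequencies by anything in our toolbox. To circumvent this I would split frequencies: for $q\geq 0$ I use $2^{-q/2}\leq 2^{3q/2}$, so the high-frequency part of the $y$-contribution is dominated by $\|y\|_{\widetilde{L}^{2}_{T}(B^{3/2}_{2,1})}$, which is available; for the finitely many low-frequency blocks I fall back on the basic energy bounds $\|y\|_{L^{2}_{T}(L^{2})}\lesssim\|U_{0}\|_{L^{2}}$ and $\|v\|_{L^{2}_{T}(L^{2})}$ from Lemmas \ref{lem3.1}--\ref{lem3.2}, noting that at low frequency $\|\dot{\Delta}_{q}v_{x}\|_{L^{2}}\lesssim\|\dot{\Delta}_{q}v\|_{L^{2}}$ makes the otherwise dangerous $\|v_{x}\|$ term harmless. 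Assembling the high- and low-frequency pieces through the homogeneous--inhomogeneous comparison of Proposition \ref{prop2.1} then produces the inhomogeneous estimate \eqref{R-E47}.
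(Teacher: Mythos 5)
Your cross functional and the resulting identity are exactly the engine of the paper's proof (the paper takes $E_{3}[\Delta_{q}U]=-\int_{\mathbb{R}}\Delta_{q}v\,\Delta_{q}u_{x}\,dx$ and the same two linear equations, and indeed no $\sqrt{E(T)}D(T)$ term appears), and your high-frequency bookkeeping --- Bernstein on the boundary terms, $2^{-q/2}\|\Delta_{q}v_{x}\|\lesssim 2^{q/2}\|\Delta_{q}v\|$, and trading $2^{-q/2}\le 2^{3q/2}$ to reach $\|y\|_{\widetilde{L}^{2}_{T}(B^{3/2}_{2,1})}$ --- matches the paper's \eqref{R-E51}. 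The genuine gap is your low-frequency/assembly step. First, a factual slip: in the homogeneous decomposition there are \emph{infinitely} many blocks $q<0$, not ``finitely many'', and the divergence there is real: the boundary term contributes $2^{-q/2}\sqrt{\sup_{t}|E_{3}[\dot{\Delta}_{q}U]|}\lesssim\sqrt{\|v\|_{L^{\infty}_{T}(L^{2})}\|u\|_{L^{\infty}_{T}(L^{2})}}$ \emph{uniformly} in $q$, so $\sum_{q<0}$ diverges, and similarly $\sum_{q<0}2^{-q/2}\|\dot{\Delta}_{q}y\|_{L^{2}_{T}(L^{2})}$ is not controlled by $\|y\|_{L^{2}_{T}(L^{2})}$. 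More fundamentally, the intermediate quantity your route through Proposition \ref{prop2.1} would require, namely $\|u_{x}\|_{\widetilde{L}^{2}_{T}(\dot{B}^{-1/2}_{2,1})}$, is by Lemma \ref{lem2.1}(ii) comparable to $\|u\|_{\widetilde{L}^{2}_{T}(\dot{B}^{1/2}_{2,1})}$, i.e.\ a dissipation norm for $u$ \emph{itself}; the system provides no dissipation for $u$ (as the introduction stresses), and the best available bound grows like $\sqrt{T}\,\|u\|_{L^{\infty}_{T}(L^{2})}$. Proposition \ref{prop2.1} would moreover need $\|u_{x}\|_{L^{2}_{T}(L^{2})}$, and your unlocalized identity cannot deliver it uniformly in $T$ either, since it produces $\|v_{x}\|^{2}_{L^{2}_{T}(L^{2})}$, one derivative above the available $v$-dissipation $\|v\|_{\widetilde{L}^{2}_{T}(B^{1/2}_{2,1})}$.

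The repair is exactly what the paper does, and it is the one place in Lemmas \ref{lem3.1}--\ref{lem3.4} where the scheme ``unlocalized $L^{2}$ estimate $+$ homogeneous blocks $+$ Proposition \ref{prop2.1}'' is abandoned: apply the \emph{inhomogeneous} operators $\Delta_{q}$, $q\ge-1$, from the start, so there is a single low-frequency block and all weight comparisons such as $2^{-q/2}\lesssim 2^{3q/2}$ and $2^{-3q/2}\lesssim 1$ hold uniformly for $q\ge-1$. Running your block computation there gives \eqref{R-E50}, then Young's inequality yields \eqref{R-E51} with the term $c_{q}\sqrt{\|u_{x}\|_{\widetilde{L}^{2}_{T}(B^{-1/2}_{2,1})}\|y\|_{\widetilde{L}^{2}_{T}(B^{3/2}_{2,1})}}$, and summing over $q\ge-1$ and absorbing $\|u_{x}\|_{\widetilde{L}^{2}_{T}(B^{-1/2}_{2,1})}$ into the left-hand side gives \eqref{R-E47} directly --- no passage through homogeneous norms, which, as explained above, is not merely a convenience but a necessity here.
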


\begin{proof}
Applying the inhomogeneous operator $\Delta_{q}(q\geq-1)$ to the first equation and second one of (\ref{R-E29}) gives
\begin{equation}
\left\{
\begin{array}{l}
\Delta_{q}v_{t}-\Delta_{q}u_{x}+\Delta_{q}y=0,\\
\Delta_{q}u_{t}-\Delta_{q}v_{x}=0.
\end{array} \right.\label{R-E48}
\end{equation}
Multiplying the first equation in (\ref{R-E48}) by $-\Delta_{q}u_{x}$ and the second one by $\Delta_{q}v_{x}$, we can obtain
\begin{eqnarray}
\frac{d}{dt}E_{3}[\Delta_{q}U]+\|\Delta_{q}u_{x}\|^2_{L^2}\leq \|\Delta_{q}v_{x}\|^2_{L^2}+\|\Delta_{q}u_{x}\|_{L^2}\|\Delta_{q}y\|_{L^2}, \label{R-E49}
\end{eqnarray}
where $$E_{3}[\Delta_{q}U]:=-\int_{\mathbb{R}}\Delta_{q}v\Delta_{q}u_{x}dx.$$
Then we integrate (\ref{R-E49}) with respect to $t\in [0,T]$ to get
\begin{eqnarray}
\|\Delta_{q}u_{x}\|^2_{L^2_{t}(L^2)}&\leq& \Big(|E_{3}[\Delta_{q}U]|+E_{3}[\Delta_{q}U_{0}]\Big)\nonumber\\ &&+\|\Delta_{q}v_{x}\|^2_{L^2_{t}(L^2)}
+\|\Delta_{q}u_{x}\|_{L^2_{t}(L^2)}\|\Delta_{q}y\|_{L^2_{t}(L^2)}. \label{R-E50}
\end{eqnarray}
By using Young's inequality and embedding properties in Lemma \ref{lem2.2}, we are led to
\begin{eqnarray}
&&2^{-q/2}\|\Delta_{q}u_{x}\|_{L^2_{T}(L^2)}\nonumber\\ &\lesssim& c_{q}E(T)
+c_{q}\|U_{0}\|_{B^{3/2}_{2,1}}
+c_{q}\|v\|_{\widetilde{L}^{2}_{T}(B^{1/2}_{2,1})}
\nonumber\\&&+c_{q} \sqrt{\|u_{x}\|_{\widetilde{L}^{2}_{T}(B^{-1/2}_{2,1})}\|y\|_{\widetilde{L}^{2}_{T}(B^{3/2}_{2,1})}},\label{R-E51}
\end{eqnarray}
which leads to (\ref{R-E47}) immediately.
\end{proof}

Having Lemmas \ref{lem3.1}-\ref{lem3.4}, we obtain the following a priori estimates for solutions. For brevity, we feel free to skip
the details.

\begin{prop}\label{prop3.2} Suppose $U\in\widetilde{\mathcal{C}}_{T}(B^{3/2}_{2,1})\cap
\widetilde{\mathcal{C}}^1_{T}(B^{1/2}_{2,1})$ is a solution of
(\ref{R-E10}) for $T>0$. There exists $\delta_{1}>0$ such that if
\begin{eqnarray}
E(T)\leq\delta_{1}, \label{R-E52}
\end{eqnarray}
then the following
estimate holds:
\begin{eqnarray}
&&E(T)+D(T)\lesssim \|U_{0}\|_{B^{3/2}_{2,1}}+\sqrt{E(T)}D(T). \label{R-E53}
\end{eqnarray}
Furthermore, it holds that
\begin{eqnarray}
&&E(T)+D(T)\lesssim \|U_{0}\|_{B^{3/2}_{2,1}}. \label{R-E54}
\end{eqnarray}
\end{prop}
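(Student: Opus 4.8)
The plan is to assemble the four dissipative estimates \eqref{R-E14}, \eqref{R-E28}, \eqref{R-E39} and \eqref{R-E47} into a single inequality for $E(T)+D(T)$, using the fact that $D(T)$ is precisely the sum of the four dissipation norms $\|y\|_{\widetilde{L}^2_T(B^{3/2}_{2,1})}$, $\|v\|_{\widetilde{L}^2_T(B^{1/2}_{2,1})}$, $\|z_x\|_{\widetilde{L}^2_T(B^{1/2}_{2,1})}$ and $\|u_x\|_{\widetilde{L}^2_T(B^{-1/2}_{2,1})}$. The crucial structural observation is that, apart from the common nonlinear remainder $\sqrt{E(T)}D(T)$, these estimates depend on one another in a triangular (acyclic) fashion: Lemma \ref{lem3.1} controls $E(T)$ together with $\|y\|$; Lemma \ref{lem3.2} bounds $\|v\|$ using only $E(T)$ and $\|y\|$; Lemma \ref{lem3.3} bounds $\|z_x\|$ using $E(T),\|y\|,\|v\|$; and Lemma \ref{lem3.4} bounds $\|u_x\|$ using $E(T),\|y\|,\|v\|$. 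Consequently a straightforward successive substitution --- rather than a delicately weighted linear combination --- will close the estimate.

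Concretely, I would first read off from \eqref{R-E14} the two bounds $E(T)\lesssim\|U_0\|_{B^{3/2}_{2,1}}+\sqrt{E(T)}D(T)$ and $\|y\|_{\widetilde{L}^2_T(B^{3/2}_{2,1})}\lesssim\|U_0\|_{B^{3/2}_{2,1}}+\sqrt{E(T)}D(T)$. Inserting these into \eqref{R-E28} gives $\|v\|_{\widetilde{L}^2_T(B^{1/2}_{2,1})}\lesssim\|U_0\|_{B^{3/2}_{2,1}}+\sqrt{E(T)}D(T)$; feeding the now-controlled quantities $E(T),\|y\|,\|v\|$ into \eqref{R-E39} and \eqref{R-E47} yields the same type of bound for $\|z_x\|_{\widetilde{L}^2_T(B^{1/2}_{2,1})}$ and $\|u_x\|_{\widetilde{L}^2_T(B^{-1/2}_{2,1})}$. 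Summing the four resulting inequalities controls $D(T)$, and adding the $E(T)$-bound from Lemma \ref{lem3.1} produces exactly \eqref{R-E53}, namely $E(T)+D(T)\lesssim\|U_0\|_{B^{3/2}_{2,1}}+\sqrt{E(T)}D(T)$.

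To pass from \eqref{R-E53} to \eqref{R-E54}, I would invoke the smallness hypothesis \eqref{R-E52}. Writing $C$ for the implicit constant in \eqref{R-E53}, the quadratic term satisfies $C\sqrt{E(T)}D(T)\leq C\sqrt{\delta_1}\,D(T)$, so choosing $\delta_1$ so small that $C\sqrt{\delta_1}\leq\tfrac12$ lets me absorb this term into the left-hand side; one then obtains $E(T)+\tfrac12 D(T)\lesssim\|U_0\|_{B^{3/2}_{2,1}}$, which immediately gives \eqref{R-E54}.

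I do not anticipate a genuine obstacle here, since all the analytic work --- the commutator and product estimates, the passage between homogeneous and inhomogeneous Chemin-Lerner norms via Proposition \ref{prop2.1}, and the block energy identities --- has already been carried out in Lemmas \ref{lem3.1}--\ref{lem3.4}. The only two points requiring care are verifying that the inter-dependence of the dissipation norms is indeed acyclic (so that the naive substitution above is legitimate and one need not tune small coefficients $\beta_i$ in a combination $\eqref{R-E14}+\beta_2\eqref{R-E28}+\beta_3\eqref{R-E39}+\beta_4\eqref{R-E47}$), and fixing the threshold $\delta_1$ quantitatively in terms of the constant $C$ from \eqref{R-E53} so that the quadratic nonlinearity can be absorbed into the dissipation functional.
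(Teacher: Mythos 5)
Your proposal is correct and follows exactly the route the paper intends: the paper itself omits the details (``Having Lemmas \ref{lem3.1}--\ref{lem3.4}, we obtain the following a priori estimates\dots we feel free to skip the details''), and the intended argument is precisely your successive substitution of \eqref{R-E14}, \eqref{R-E28}, \eqref{R-E39}, \eqref{R-E47}---legitimate because the dissipation norms depend on each other acyclically modulo the common remainder $\sqrt{E(T)}D(T)$---followed by absorbing $C\sqrt{E(T)}D(T)\leq C\sqrt{\delta_{1}}\,D(T)$ into the left-hand side under the smallness hypothesis \eqref{R-E52}. Your quantitative choice $C\sqrt{\delta_{1}}\leq\tfrac12$ is exactly how \eqref{R-E53} upgrades to \eqref{R-E54}.
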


By using the standard boot-strap argument, for instance, see
\cite{MN} (Theorem 7.1, p.100), Theorem \ref{thm1.1} follows from the
local existence result (Proposition \ref{prop3.1}) and a
priori estimate (Proposition \ref{prop3.2}). Here, we give the
outline for completeness.\\

\noindent\textbf{\textit{The proof of Theorem \ref{thm1.1}}.}
If the
initial data satisfy $
\|U_{0}\|_{B^{3/2}_{2,1}}\leq\frac{\delta_{1}}{2}$,
by Proposition \ref{prop3.1}, then we determine a time
$T_{1}>0(T_{1}\leq T_{0})$ such that the local solutions of
(\ref{R-E10}) exists in
$\widetilde{\mathcal{C}}_{T_{1}}(B^{3/2}_{2,1})$ and
$\|U\|_{\widetilde{L}^\infty_{T_{1}}(B^{3/2}_{2,1})}\leq\delta_{1}$.
Therefore from Proposition \ref{prop3.2} the solutions satisfy the
a priori estimate
$\|U\|_{\widetilde{L}^\infty_{T_{1}}(B^{3/2}_{2,1})}\leq
C_{1}\|U_{0}\|_{B^{3/2}_{2,1}}\leq\frac{\delta_{1}}{2}$
provided $
\|U_{0}\|_{B^{\sigma}_{2,1}}\leq\frac{\delta_{1}}{2C_{1}}.$
Thus by Proposition \ref{prop3.1} the system
(\ref{R-E10}) for $t\geq T_{1}$ with the initial data
$U(T_{1})$ has again a unique solution
$U$ satisfying
$\|U\|_{\widetilde{L}^\infty_{(T_{1},2T_{1})}(B^{3/2}_{2,1})}\leq\delta_{1}$,
further
$\|U\|_{\widetilde{L}^\infty_{2T_{1}}(B^{3/2}_{2,1})}\leq\delta_{1}$.
Then by Proposition \ref{prop3.2} we have
$\|U\|_{\widetilde{L}^\infty_{2T_{1}}(B^{3/2}_{2,1})}\leq
C_{1}\|U_{0}\|_{B^{3/2}_{2,1}}\leq\frac{\delta_{1}}{2}$.
Subsequently, we continuous the same process for $0\leq t\leq nT_{1},
n=3,4,...$ and finally get a global solution
$U\in\widetilde{\mathcal{C}}(B^{\sigma}_{2,1})$
satisfying
\begin{eqnarray}&&\|U\|_{\widetilde{L}^\infty(B^{3/2}_{2,1})}+\Big(\|y\|_{\widetilde{L}^2_{T}(B^{3/2}_{2,1})}+\|(v,z_{x})\|_{\widetilde{L}^2_{T}(B^{1/2}_{2,1})}
+\|u_{x}\|_{\widetilde{L}^2_{T}(B^{-1/2}_{2,1})})\nonumber\\&\leq&  C_{1}\|U_{0}\|_{B^{3/2}_{2,1}}
\leq\frac{\delta_{1}}{2}.\label{R-E55}
\end{eqnarray}

\section{Optimal decay rates}\setcounter{equation}{0}\label{sec:4}
By employing the energy method in Fourier spaces in \cite{IHK,MK}, it is well-known that the linearized system (\ref{R-E5})-(\ref{R-E6}) admits the dissipative structure
$${\rm Re}\,\lambda(i\xi)\leq -c\eta_1(\xi),\ \ \  {\rm for} \quad a=1, $$
with $\eta_1(\xi)=\frac{\xi^2}{1+\xi^2}$, that is, the following differential inequality holds
\begin{eqnarray}
\frac{d}{dt}E[\hat{U}]+c_{1}\eta_{1}(\xi)|\hat{U}|^2\leq0, \label{R-E56}
\end{eqnarray}
where $E[\hat{U}]\approx|\hat{U}|^2$. As a matter of fact, following from the derivation of (\ref{R-E56}) in \cite{IHK,MK}, we can deduce
the frequency-localization differential inequality
\begin{eqnarray}
\frac{d}{dt}E[\widehat{\Delta_{q}U}]+c_{1}\eta_{1}(\xi)|\widehat{\Delta_{q}U}|^2\leq0, \label{R-E57}
\end{eqnarray}
for $q\geq-1$, and
\begin{eqnarray}
\frac{d}{dt}E[\widehat{\dot{\Delta}_{q}U}]+c_{1}\eta_{1}(\xi)|\widehat{\dot{\Delta}_{q}U}|^2\leq0, \label{R-E58}
\end{eqnarray}
for $q\in \mathbb{Z}$.

\subsection{Decay property for the linearized system}
As shown by \cite{XK2}, we do the similar high-frequency and low-frequency analysis to achieve the following decay property for (\ref{R-E5})-(\ref{R-E6}).

\begin{prop}\label{prop4.1}
If $U_{0}\in \dot{B}^{\sigma}_{2,1}(\mathbb{R})\cap \dot{B}^{-s}_{2,\infty}(\mathbb{R})$ for $\sigma\geq0$ and $s>0$, then the solutions $U(t,x)$ of (\ref{R-E5})-(\ref{R-E6}) has the decay estimate
\begin{equation}
\|\Lambda^{\ell}U\|_{B_{2,1}^{\sigma-\ell}}\lesssim \|U_{0}\|_{\dot{B}_{2,1}^{\sigma}\cap \dot{B}_{2,\infty}^{-s}}(1+t)^{-\frac{\ell+s}{2}} \label{R-E59}
\end{equation}
for $0\leq\ell\leq\sigma$. In particular, if $U_{0}\in \dot{B}^{\sigma}_{2,1}(\mathbb{R})\cap L^p(\mathbb{R})(1\leq p<2$), one further has
\begin{equation}
\|\Lambda^{\ell}U\|_{B_{2,1}^{\sigma-\ell}}\lesssim \|U_{0}\|_{\dot{B}_{2,1}^{\sigma}\cap L^{p}}(1+t)^{-\frac{1}{2}(\frac{1}{p}-\frac{1}{2})-\frac{\ell}{2}} \label{R-E60}
\end{equation}
for $0\leq\ell\leq\sigma$.
\end{prop}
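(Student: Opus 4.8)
The plan is to pass from the frequency-localised Lyapunov inequality \eqref{R-E58} to a pointwise-in-frequency decay bound, and then to rebuild the inhomogeneous target norm in \eqref{R-E59} by a low/high-frequency splitting, extracting the polynomial rate solely from the low frequencies and an exponential rate from the high ones. First I would turn \eqref{R-E58} into a genuine block decay estimate. Since $E[\widehat{\dot\Delta_q U}]\approx|\widehat{\dot\Delta_q U}|^2$, replacing $|\widehat{\dot\Delta_q U}|^2$ by a fixed multiple of $E[\widehat{\dot\Delta_q U}]$ and applying Gronwall's lemma gives
\[
|\widehat{\dot\Delta_q U}(t,\xi)|\lesssim e^{-c\eta_1(\xi)t}\,|\widehat{\dot\Delta_q U_0}(\xi)|,\qquad q\in\mathbb{Z}.
\]
Because $\eta_1(\xi)=\xi^2/(1+\xi^2)\approx 2^{2q}$ on the low shells $(q\le 0)$ and $\eta_1(\xi)\approx 1$ on the high shells $(q\ge 0)$, Plancherel's theorem on each shell yields $\|\dot\Delta_q U(t)\|_{L^2}\lesssim e^{-c2^{2q}t}\|\dot\Delta_q U_0\|_{L^2}$ for $q\le 0$ and $\|\dot\Delta_q U(t)\|_{L^2}\lesssim e^{-ct}\|\dot\Delta_q U_0\|_{L^2}$ for $q\ge 0$.

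Using $B^{\sigma-\ell}_{2,1}=L^2\cap\dot B^{\sigma-\ell}_{2,1}$ from Lemma \ref{lem2.2}(1) (for $\sigma-\ell>0$; the endpoint $\ell=\sigma$ is handled directly on the single block $\Delta_{-1}$), I would bound $\|\Lambda^{\ell}U\|_{B^{\sigma-\ell}_{2,1}}$ by $\|\Lambda^{\ell}U\|_{L^2}+\|\Lambda^{\ell}U\|_{\dot B^{\sigma-\ell}_{2,1}}$ and split each into low $(q\le 0)$ and high $(q\ge 1)$ frequencies. The high-frequency parts are routine: the exponential bound above, together with $\ell\le\sigma$ (so $2^{q\ell}\lesssim 2^{q\sigma}$ on high shells), controls both by $e^{-ct}\|U_0\|_{\dot B^{\sigma}_{2,1}}\lesssim (1+t)^{-(\ell+s)/2}\|U_0\|_{\dot B^{\sigma}_{2,1}}$.

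The decisive term is the low-frequency one. The homogeneous piece $\|\Lambda^{\ell}U\|_{\dot B^{\sigma-\ell}_{2,1}}$ restricted to $q\le 0$ actually decays at the faster rate $(\sigma+s)/2\ge(\ell+s)/2$, since the $\Lambda^\ell$-gain $2^{q\ell}$ cancels the weight and leaves $2^{q\sigma}$; hence it is non-binding. The rate $(\ell+s)/2$ is forced by the $L^2$ low-frequency part: by Plancherel and $\|\dot\Delta_q U_0\|_{L^2}\le 2^{qs}\|U_0\|_{\dot B^{-s}_{2,\infty}}$,
\[
\Big(\sum_{q\le 0}\|\dot\Delta_q\Lambda^{\ell}U(t)\|_{L^2}^2\Big)^{1/2}\lesssim \|U_0\|_{\dot B^{-s}_{2,\infty}}\Big(\sum_{q\le 0}2^{2q(\ell+s)}e^{-c2^{2q}t}\Big)^{1/2}\lesssim \|U_0\|_{\dot B^{-s}_{2,\infty}}(1+t)^{-\frac{\ell+s}{2}},
\]
the last step using the elementary bound $\sum_{q\le 0}2^{2q\beta}e^{-c2^{2q}t}\lesssim(1+t)^{-\beta}$ for $\beta=\ell+s>0$, obtained by comparison with $\int_0^1 r^{\beta-1}e^{-crt}\,dr$. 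Collecting the four contributions gives \eqref{R-E59}, and \eqref{R-E60} follows immediately from the embedding $L^p\hookrightarrow\dot B^{-s}_{2,\infty}$ of Lemma \ref{lem2.3} with $s=n(1/p-1/2)$.

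The main obstacle is exactly this sharp low-frequency rate: one must retain the $\ell^2$-in-$q$ (equivalently $L^2$) structure on the low modes to obtain $(\ell+s)/2$, since summing the homogeneous $\dot B^{\sigma-\ell}_{2,1}$ norm there would only produce the faster, hence non-binding, exponent $(\sigma+s)/2$. The two remaining technical points are the bookkeeping that reconciles the homogeneous data norms $\dot B^{\sigma}_{2,1}\cap\dot B^{-s}_{2,\infty}$ with the inhomogeneous target norm through Lemma \ref{lem2.2}(1), and the verification that $E\approx|\cdot|^2$ permits the passage from \eqref{R-E58} to the genuine $L^2$ block norms.
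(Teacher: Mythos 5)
Your proposal is correct and follows essentially the same route as the paper, which (deferring details to \cite{XK2}) derives the decay precisely from the frequency-localized Lyapunov inequalities \eqref{R-E57}--\eqref{R-E58} via Gronwall, Plancherel on dyadic shells, and a low/high-frequency splitting in which the low modes carry the polynomial rate through $\sum_{q\leq 0}2^{2q(\ell+s)}e^{-c2^{2q}t}\lesssim (1+t)^{-(\ell+s)}$ and the high modes decay exponentially, with \eqref{R-E60} then following from the embedding of Lemma \ref{lem2.3}. Your observations that the $\ell^{2}$-in-$q$ structure must be retained on the low block to get the sharp exponent $(\ell+s)/2$, and that the endpoint $\ell=\sigma$ is handled directly on the block $\Delta_{-1}$, are exactly the right bookkeeping.
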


Additionally, we also obtain the decay property on the framework of homogeneous Besov spaces, see \cite{XK2} for the similar proof.
\begin{prop}\label{prop4.2}
If $U_{0}\in \dot{B}^{\sigma}_{2,1}(\mathbb{R})\cap \dot{B}^{-s}_{2,\infty}(\mathbb{R})$ for $\sigma\in \mathbb{R}, s\in \mathbb{R}$ satisfying $\sigma+s>0$, then the solution $U(t,x)$ of (\ref{R-E5})-(\ref{R-E6}) has the decay estimate
\begin{equation}
\|U\|_{\dot{B}_{2,1}^{\sigma}}\lesssim \|U_{0}\|_{\dot{B}_{2,1}^{\sigma}\cap \dot{B}_{2,\infty}^{-s}}(1+t)^{-\frac{\sigma+s}{2}}. \label{R-E61}
\end{equation}
In particular, if $U_{0}\in \dot{B}^{\sigma}_{2,1}(\mathbb{R})\cap L^p(\mathbb{R})(1\leq p<2$), one further has
\begin{equation}
\|U\|_{\dot{B}_{2,1}^{\sigma}}\lesssim \|U_{0}\|_{\dot{B}_{2,1}^{\sigma}\cap L^{p}}(1+t)^{-\frac{1}{2}(\frac{1}{p}-\frac{1}{2})-\frac{\sigma}{2}}. \label{R-E62}
\end{equation}
\end{prop}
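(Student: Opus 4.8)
The plan is to upgrade the frequency-localized differential inequality \eqref{R-E58} into a decay estimate for each Littlewood--Paley block, and then to sum the blocks with the correct weights after splitting at the frequency threshold $|\xi|\approx 1$. First I would exploit that on the support of $\widehat{\dot{\Delta}_{q}U}$, which lies in an annulus $|\xi|\approx 2^q$, the multiplier $\eta_1(\xi)=\xi^2/(1+\xi^2)$ is bounded below by $\rho_q:=c\min\{2^{2q},1\}$, since $\eta_1$ is increasing in $|\xi|$. Integrating \eqref{R-E58} in $\xi$, using $E[\widehat{\dot{\Delta}_{q}U}]\approx|\widehat{\dot{\Delta}_{q}U}|^2$ together with this lower bound, and applying Gronwall's inequality yields
\begin{equation*}
\|\dot{\Delta}_{q}U(t)\|_{L^2}\lesssim e^{-c\rho_q t}\|\dot{\Delta}_{q}U_0\|_{L^2},\qquad q\in\mathbb{Z}.
\end{equation*}

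Next I would insert this into the definition of the homogeneous Besov norm and split the sum at $q=0$:
\begin{equation*}
\|U(t)\|_{\dot{B}^{\sigma}_{2,1}}=\sum_{q\in\mathbb{Z}}2^{q\sigma}\|\dot{\Delta}_{q}U(t)\|_{L^2}\lesssim \sum_{q\le 0}2^{q\sigma}e^{-c2^{2q}t}\|\dot{\Delta}_{q}U_0\|_{L^2}+e^{-ct}\sum_{q>0}2^{q\sigma}\|\dot{\Delta}_{q}U_0\|_{L^2}.
\end{equation*}
For the high-frequency part $q>0$ one has $\rho_q\approx 1$, so the remaining sum is exactly bounded by $\|U_0\|_{\dot{B}^{\sigma}_{2,1}}$; since an exponential dominates any polynomial, $e^{-ct}\lesssim(1+t)^{-(\sigma+s)/2}$ and this term is controlled by $\|U_0\|_{\dot{B}^{\sigma}_{2,1}}(1+t)^{-(\sigma+s)/2}$.

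For the low-frequency part I would use the defining bound $\|\dot{\Delta}_{q}U_0\|_{L^2}\le 2^{qs}\|U_0\|_{\dot{B}^{-s}_{2,\infty}}$, reducing matters to estimating the dyadic sum $\sum_{q\le0}2^{q(\sigma+s)}e^{-c2^{2q}t}$. Comparing it with the integral $\int_{-\infty}^{0}2^{x(\sigma+s)}e^{-c2^{2x}t}\,dx$ and substituting $w=2^{x}\sqrt{t}$ produces the factor $t^{-(\sigma+s)/2}$ times a convergent Gamma-type integral $\int_{0}^{\infty}w^{\sigma+s-1}e^{-cw^2}\,dw$; for $t\le 1$ the same sum is instead bounded by the convergent geometric series $\sum_{q\le0}2^{q(\sigma+s)}$, so altogether this piece is $\lesssim\|U_0\|_{\dot{B}^{-s}_{2,\infty}}(1+t)^{-(\sigma+s)/2}$. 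Combining the two pieces gives \eqref{R-E61}. Finally, for \eqref{R-E62} I would simply invoke Lemma \ref{lem2.3} with $n=1$ and $r=2$, which provides the embedding $L^{p}\hookrightarrow\dot{B}^{-(1/p-1/2)}_{2,\infty}$, and substitute $s=1/p-1/2$ into \eqref{R-E61}.

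I expect the main obstacle to be the low-frequency dyadic sum: both the convergence of the series at $t=0$ and the extraction of the sharp rate $t^{-(\sigma+s)/2}$ hinge precisely on the hypothesis $\sigma+s>0$, which guarantees that the geometric series converges and that the integral $\int_{0}^{\infty}w^{\sigma+s-1}e^{-cw^2}\,dw$ is finite. This is exactly where the structural assumption enters, and some care is needed to treat the two regimes $t\le 1$ and $t\ge 1$ uniformly so that one obtains the factor $(1+t)$ rather than merely $t$.
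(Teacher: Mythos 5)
Your proof is correct and takes essentially the same approach the paper intends: the paper derives the frequency-localized differential inequality \eqref{R-E58} and then simply refers to \cite{XK2} for the ``similar'' high-frequency/low-frequency analysis, which is exactly the Gronwall-plus-dyadic-splitting argument you carry out in detail. Your treatment of the low-frequency sum (where $\sigma+s>0$ guarantees convergence and the sharp rate $(1+t)^{-(\sigma+s)/2}$) and the derivation of \eqref{R-E62} from Lemma \ref{lem2.3} with $s=1/p-1/2$ match the cited scheme precisely.
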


\subsection{Localized time-weighted energy approaches}
Firstly, we denote by $\mathcal{G}(t)$ the Green matrix associated with the linearized system (\ref{R-E5})-(\ref{R-E6}) as follows:
\begin{eqnarray}\mathcal{G}(t)f=\mathcal{F}^{-1}[e^{
-t\hat{\Phi}(i\xi)}\mathcal{F}f],  \label{R-E63}
\end{eqnarray}
with
\begin{eqnarray*}
\hat{\Phi}(i\xi)=(i\xi A+L),
\end{eqnarray*} where
\begin{eqnarray*}
A=-\left(%
\begin{array}{ccccc}
 0 & 1 & 0& 0\\
1 & 0 &0 &0\\
0 & 0 &  0 &1\\
0&0&1& 0\\
\end{array}%
\right), \ \ \ L=\left(
\begin{array}{cccc}
0 & 0 & 0 & 1 \\
0 & 0 & 0 & 0 \\
0 & 0 & 0 & 0 \\
-1 & 0 & 0 & \gamma
\end{array}
\right).
\end{eqnarray*}
Then, by the classical Duhamel principle, the solution to Cauchy problem of
the nonlinear Timoshenko system
\begin{eqnarray}\label{R-E64}
\left\{\begin{array}{l}
        v_t-u_x+y=0,\\[2mm]
              u_t-v_x=0,\\[2mm]
        z_t-y_x=0,\\[2mm]
          y_t-z_{x}-v+\gamma y=g(z)_{x},
\end{array}\right.
\end{eqnarray}
with
\begin{eqnarray}
U|_{t=0}=U_{0}(x) \label{R-E65}
\end{eqnarray}
can be represented by
\begin{eqnarray}
U(t,x)=\mathcal{G}(t)U_{0}+\int^{t}_{0}\mathcal{G}(t-\tau)\mathcal{R}(\tau)d\tau, \label{R-E66}
\end{eqnarray}
where $\mathcal{R}:=(0,0,0,g(z)_{x})^{\top}$. Note that the smooth function $g(z)=O(z^2)$ satisfying $g(0)=0$ and $g'(0)=0$.

Additionally, from the definition of $\mathcal{G}(t)$, it is not difficult to obtain the frequency-localization Duhamel principle for
(\ref{R-E64})-(\ref{R-E65}).

\begin{lem}\label{lem4.1}
Suppose that $U(t,x)$ is a solution of (\ref{R-E64})-(\ref{R-E65}). Then
\begin{eqnarray}
\Delta_{q}\Lambda^{\ell}U(t,x)=\Delta_{q}\Lambda^{\ell}[\mathcal{G}(t)U_{0}]
+\int^{t}_{0}\Delta_{q}\Lambda^{\ell}[\mathcal{G}(t-\tau)\mathcal{R}(\tau)]d\tau\label{R-E67}
\end{eqnarray}
for $q\geq-1$ and $\ell\in \mathbb{R}$, and
\begin{eqnarray}
\dot{\Delta}_{q}\Lambda^{\ell}U(t)=\dot{\Delta}_{q}\Lambda^{\ell}[\mathcal{G}(t)U_{0}]
+\int^{t}_{0}\dot{\Delta}_{q}\Lambda^{\ell}[\mathcal{G}(t-\tau)\mathcal{R}(\tau)]d\tau\label{R-E68}
\end{eqnarray}
for $q\in\mathbb{Z}$ and $\ell\in \mathbb{R}$.
\end{lem}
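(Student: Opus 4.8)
The plan is to read both identities straight off the Duhamel representation \eqref{R-E66}, exploiting the fact that $\Delta_{q}$, $\Lambda^{\ell}$ and the Green operator $\mathcal{G}(t)$ are all Fourier multipliers in the spatial variable and therefore commute with one another. The only substantive point is to move the fixed frequency-localization operator $\Delta_{q}\Lambda^{\ell}$ through the time integral $\int_{0}^{t}(\cdots)\,d\tau$. I treat the inhomogeneous identity \eqref{R-E67} in detail; the homogeneous one \eqref{R-E68} is obtained verbatim by replacing $\Delta_{q}$ with $\dot{\Delta}_{q}$.

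First I would pass to the Fourier side. Applying $\mathcal{F}$ to \eqref{R-E66} and using the definition $\mathcal{F}[\mathcal{G}(t)f]=e^{-t\hat{\Phi}(i\xi)}\mathcal{F}f$ from \eqref{R-E63} gives
\begin{eqnarray*}
\hat{U}(t,\xi)=e^{-t\hat{\Phi}(i\xi)}\hat{U}_{0}(\xi)+\int_{0}^{t}e^{-(t-\tau)\hat{\Phi}(i\xi)}\hat{\mathcal{R}}(\tau,\xi)\,d\tau.
\end{eqnarray*}
Both $\Delta_{q}$ and $\Lambda^{\ell}$ act as multiplication by a symbol $m_{q,\ell}(\xi)$ that depends only on $\xi$, namely the product of the dyadic cut-off symbol of $\Delta_{q}$ with $|\xi|^{\ell}$. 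Since $m_{q,\ell}(\xi)$ is independent of $\tau$, multiplying the identity above by it and pulling the factor inside the $\tau$-integral is immediate, yielding
\begin{eqnarray*}
m_{q,\ell}(\xi)\hat{U}(t,\xi)=m_{q,\ell}(\xi)e^{-t\hat{\Phi}(i\xi)}\hat{U}_{0}(\xi)+\int_{0}^{t}m_{q,\ell}(\xi)e^{-(t-\tau)\hat{\Phi}(i\xi)}\hat{\mathcal{R}}(\tau,\xi)\,d\tau.
\end{eqnarray*}
It then remains only to apply $\mathcal{F}^{-1}$ and commute it with the time integration, which reproduces \eqref{R-E67} because $\mathcal{F}^{-1}[m_{q,\ell}(\xi)e^{-(t-\tau)\hat{\Phi}(i\xi)}\hat{\mathcal{R}}(\tau,\cdot)]=\Delta_{q}\Lambda^{\ell}[\mathcal{G}(t-\tau)\mathcal{R}(\tau)]$ and the first term is exactly $\Delta_{q}\Lambda^{\ell}[\mathcal{G}(t)U_{0}]$.

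The single step requiring care, and the main (mild) obstacle, is the interchange of $\mathcal{F}^{-1}$ with $\int_{0}^{t}d\tau$, that is, a Fubini argument on the iterated integral over $(\xi,\tau)$. I would justify it using the regularity furnished by Theorem \ref{thm1.1}: since $z\in\widetilde{L}^{\infty}_{T}(B^{3/2}_{2,1})$, the composition estimate of Proposition \ref{prop2.4} together with the product estimate of Proposition \ref{prop2.2} gives $g(z)=O(z^{2})\in\widetilde{L}^{\infty}_{T}(B^{3/2}_{2,1})$, so that each localized source $\Delta_{q}\hat{\mathcal{R}}(\tau,\cdot)$ is compactly supported in $\xi$ and integrable in $\tau$ on $[0,t]$. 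On this fixed (bounded, and for $q\geq0$ annular) frequency support the propagator $e^{-(t-\tau)\hat{\Phi}(i\xi)}$ and the symbol $m_{q,\ell}$ are bounded; moreover $\mathcal{R}=(0,0,0,g(z)_{x})^{\top}$ is in divergence form, so $\hat{\mathcal{R}}$ carries a factor $i\xi$ that tames any low-frequency effect of $|\xi|^{\ell}$ in the $q=-1$ block. Hence the integrand is absolutely integrable over $[0,t]\times\mathbb{R}$ and Fubini's theorem legitimizes the exchange. Running the identical computation with the homogeneous blocks $\dot{\Delta}_{q}$, whose symbols are again $\tau$-independent and, together with $|\xi|^{\ell}$, bounded on the annular support $|\xi|\approx2^{q}$, yields \eqref{R-E68} and completes the proof.
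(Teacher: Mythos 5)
Your proposal is correct and coincides with the paper's intended argument: the paper states Lemma \ref{lem4.1} without proof, remarking only that it follows ``from the definition of $\mathcal{G}(t)$,'' i.e.\ precisely the observation that $\Delta_{q}$, $\dot{\Delta}_{q}$, $\Lambda^{\ell}$ and $\mathcal{G}(t)$ are commuting Fourier multipliers applied to the Duhamel formula \eqref{R-E66}. Your write-up simply supplies the routine details (the $\tau$-independence of the symbol $m_{q,\ell}$ and the Fubini justification for interchanging $\mathcal{F}^{-1}$ with the time integral), which is exactly the right filling-in of the paper's omitted proof.
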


Based on Lemma \ref{lem4.1}, we shall deduce the optimal decay estimate by developing
time-weighted energy approaches as in \cite{Ma} in terms of high-frequency and low-frequency decompositions.
For this purpose, we first define some sup-norms as follows
$$
\mathcal{E}_{0}(t):=\sup_{0\leq\tau\leq t}\|U(\tau)\|_{B^{3/2}_{2,1}};
$$
\begin{eqnarray*}
\mathcal{E}_{1}(t):=\sup_{0\leq\ell<1/2}\sup_{0\leq\tau\leq t}(1+\tau)^{\frac{1}{4}+\frac{\ell}{2}}\|\Lambda^{\ell}U(\tau)\|_{B^{1/2-\ell}_{2,1}}+\sup_{0\leq\tau\leq t}(1+\tau)^{\frac{1}{2}}\|\Lambda^{\frac{1}{2}}U(\tau)\|_{\dot{B}^{0}_{2,1}}.
\end{eqnarray*}
 As a consequence, we
have
\begin{prop}\label{prop4.1}
Let $U=(v,u,z,y)^{\top}$ be the global classical solution in the sense of Theorem \ref{thm1.1}. Suppose that $U_{0}\in B^{3/2}_{2,1}\cap \dot{B}^{-1/2}_{2,\infty}$ and the norm
$\mathcal{M}_{0}:=\|U_{0}\|_{B^{3/2}_{2,1}\cap \dot{B}^{-1/2}_{2,\infty}}$ is sufficiently small. Then it holds that
\begin{eqnarray}
\|\Lambda^{\ell}U(t)\|_{X_{1}}\lesssim \mathcal{M}_{0}(1+t)^{-\frac{1}{4}-\frac{\ell}{2}} \label{R-E69}
\end{eqnarray}
for $0\leq\ell\leq1/2$, where $X_{1}:=B^{1/2-\ell}_{2,1}$ if $0\leq\ell<1/2$ and $X_{1}:=\dot{B}^{0}_{2,1}$ if $\ell=1/2$.
\end{prop}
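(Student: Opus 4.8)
The plan is to combine the frequency-localization Duhamel formula (Lemma \ref{lem4.1}) with the linearized decay estimates (Propositions \ref{prop4.1}, \ref{prop4.2}) and the global bounds of Theorem \ref{thm1.1}, closing everything through the time-weighted sup-norm $\mathcal{E}_1(t)$ and a smallness/bootstrap argument. The key observation is that the decay rate $(1+t)^{-1/4-\ell/2}$ is exactly what the linear semigroup $\mathcal{G}(t)$ produces from data in $\dot{B}^{-1/2}_{2,\infty}$ (take $s=1/2$ in \eqref{R-E59}), so the whole problem reduces to showing that the Duhamel (nonlinear) term decays no slower than the linear term. I would set up the argument by splitting $\Lambda^\ell U$ into low-frequency and high-frequency parts and estimating each separately, since the two regimes use different mechanisms: pure semigroup decay at low frequencies and the exponential/energy dissipation at high frequencies.

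\medskip
\noindent\textbf{Step 1 (linear part).} Apply Proposition \ref{prop4.1} with $\sigma=3/2$, $s=1/2$, using $\|U_0\|_{\dot{B}^{-1/2}_{2,\infty}}\le\mathcal{M}_0$, to get $\|\Lambda^\ell[\mathcal{G}(t)U_0]\|_{B^{1/2-\ell}_{2,1}}\lesssim\mathcal{M}_0(1+t)^{-1/4-\ell/2}$ for $0\le\ell<1/2$; the endpoint $\ell=1/2$ in the homogeneous space $\dot{B}^0_{2,1}$ comes from Proposition \ref{prop4.2} with $\sigma=0$, $s=1/2$. This already gives the claimed bound for the first term in \eqref{R-E66}.

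\medskip
\noindent\textbf{Step 2 (nonlinear Duhamel term).} For the integral $\int_0^t\mathcal{G}(t-\tau)\mathcal{R}(\tau)\,d\tau$ with $\mathcal{R}=(0,0,0,g(z)_x)^\top$ and $g(z)=O(z^2)$, I would apply the linear estimates to the integrand. The input norm to control is $\|\mathcal{R}(\tau)\|_{\dot{B}^{-1/2}_{2,\infty}}$ (low frequencies) and a higher Besov norm (high frequencies). Using the embedding $L^1\hookrightarrow\dot{B}^{-1/2}_{2,\infty}$ (Lemma \ref{lem2.3}) together with the product structure, the quadratic nonlinearity $g(z)_x\sim (z^2)_x$ should be estimated by $\|\mathcal{R}(\tau)\|_{\dot{B}^{-1/2}_{2,\infty}}\lesssim\|z\Lambda z\|_{L^1}\lesssim\|z\|_{L^2}\|\Lambda z\|_{L^2}$, and each factor is bounded in terms of $\mathcal{E}_1(\tau)$ with decay rates $(1+\tau)^{-1/4}$ and $(1+\tau)^{-3/4}$ respectively, yielding an integrand decay of order $(1+\tau)^{-1}$ times a high-power of $\mathcal{E}_1$. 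Splitting the time integral at $\tau=t/2$ and inserting the semigroup decay factor $(t-\tau)^{-1/4-\ell/2}$ on $[0,t/2]$ and the source decay on $[t/2,t]$, the convolution reproduces the target rate $(1+t)^{-1/4-\ell/2}$. The high-frequency part of the Duhamel term is handled instead by the exponential dissipation from \eqref{R-E57}--\eqref{R-E58}, which decays faster than any polynomial and hence causes no loss; here one uses the a priori energy bound $E(\infty)\lesssim\mathcal{M}_0$ from Theorem \ref{thm1.1} rather than time decay.

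\medskip
\noindent\textbf{Step 3 (closing the estimate).} Collecting Steps 1 and 2 gives $\mathcal{E}_1(t)\lesssim\mathcal{M}_0+\mathcal{E}_1(t)^2$, and since $\mathcal{M}_0$ is assumed small, a standard continuity/bootstrap argument yields $\mathcal{E}_1(t)\lesssim\mathcal{M}_0$ uniformly in $t$, which is exactly \eqref{R-E69}. The main obstacle I anticipate is the low-frequency control of the nonlinear term: one must verify that the $\dot{B}^{-1/2}_{2,\infty}$ norm of the quadratic source genuinely decays like $(1+\tau)^{-1}$ and that the resulting time convolution does not lose a logarithm at the borderline exponent. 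This is precisely the delicate point flagged in the introduction ("new observations \dots at the low-frequency"), and it is why the degenerate index $-1/2$ is forced rather than a general $-s$ with $0<s\le1/2$: the one-dimensional interpolation is too weak for the general index, so the borderline case must be treated by the explicit product estimate above together with the sharp embedding $L^1\hookrightarrow\dot{B}^{-1/2}_{2,\infty}$.
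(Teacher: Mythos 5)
Your overall architecture --- frequency-localized Duhamel formula, linearized decay with $s=1/2$ from $\dot{B}^{-1/2}_{2,\infty}$, the sup-norms $\mathcal{E}_0,\mathcal{E}_1$, splitting the time integral at $t/2$, the embedding $L^1\hookrightarrow\dot{B}^{-1/2}_{2,\infty}$, and the final bootstrap $\mathcal{E}_1\lesssim\mathcal{M}_0+\mathcal{E}_1^2$ --- coincides with the paper's. But your Step 2 has a genuine gap at exactly the point you flag as the ``delicate point,'' and you do not resolve it. Your source estimate $\|\mathcal{R}(\tau)\|_{\dot{B}^{-1/2}_{2,\infty}}\lesssim\|z\|_{L^2}\|\Lambda z\|_{L^2}$ with rates $(1+\tau)^{-1/4}$ and $(1+\tau)^{-3/4}$ is not available: the rate $(1+\tau)^{-3/4}$ corresponds to $\ell=1$, while $\mathcal{E}_1$ (and the proposition itself) caps at $\ell=1/2$ with rate $(1+\tau)^{-1/2}$; with energy regularity only $B^{3/2}_{2,1}$, interpolating $\|\Lambda z\|_{L^2}\lesssim\|\Lambda^{1/2}z\|_{L^2}^{1/2}\|\Lambda^{3/2}z\|_{L^2}^{1/2}$ gives at best $(1+\tau)^{-1/4}$, so your product route yields only $(1+\tau)^{-1/2}$ for the source, and then $\int_0^{t/2}(1+t-\tau)^{-1/4-\ell/2}(1+\tau)^{-1/2}\,d\tau\lesssim(1+t)^{1/4-\ell/2}$, which does not decay at all. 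Even granting the $(1+\tau)^{-1}$ rate, the convolution on $[0,t/2]$ loses the logarithm you worry about. The paper's resolution (the ``new observation'' of \eqref{R-E78}) is different on $[0,t/2]$: since $\mathcal{R}=\partial_x(0,0,0,g(z))^{\top}$, one moves the derivative onto the semigroup, so the kernel gains a factor and decays like $(1+t-\tau)^{-1/4-(\ell+1)/2}$ acting on $\|g(z)\|_{\dot{B}^{-1/2}_{2,\infty}}\lesssim\|g(z)\|_{L^1}\lesssim\|z\|_{L^2}^2\lesssim(1+\tau)^{-1/2}\mathcal{E}_1^2(t)$; the non-integrable $(1+\tau)^{-1/2}$ integrates to $(1+t)^{1/2}$ and is absorbed by the extra half power from the kernel, giving $(1+t)^{-1/4-\ell/2}$ with no log. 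On $[t/2,t]$ the derivative is kept on the nonlinearity, $\|g(z)_x\|_{\dot{B}^{-1/2}_{2,\infty}}\lesssim\|z\|^2_{\dot{B}^{1/2}_{2,1}}\lesssim(1+\tau)^{-1}\mathcal{E}_1^2(t)$ --- note this $(1+\tau)^{-1}$ uses two factors at $\ell=1/2$, not your $\ell=0$ and $\ell=1$ split --- and causes no loss there because $(1+\tau)^{-1}\approx(1+t)^{-1}$ on that interval and $1/4+\ell/2\leq 1/2<1$ keeps the kernel integral at $(1+t)^{3/4-\ell/2}$ (see \eqref{R-E80}--\eqref{R-E81}).

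A second, smaller error: for the high-frequency Duhamel term you propose to use only the a priori energy bound ``rather than time decay,'' but $\int_0^t e^{-c(t-\tau)}\,d\tau=O(1)$, so a bounded source gives no decay whatsoever. The paper instead estimates $\|\mathcal{R}(\tau)\|_{\dot{B}^{1/2}_{2,1}}\lesssim\|\Lambda^{\ell}z\|_{\dot{B}^{1/2-\ell}_{2,1}}\|z\|_{B^{3/2}_{2,1}}\lesssim(1+\tau)^{-1/4-\ell/2}\mathcal{E}_0(t)\mathcal{E}_1(t)$ via the product estimate \eqref{R-E74}, so that the exponential convolution reproduces the target rate; this is also the origin of the cross term $\mathcal{E}_0\mathcal{E}_1$ in \eqref{R-E70}, which is then absorbed using $\mathcal{E}_0\lesssim\mathcal{M}_0$ from Theorem \ref{thm1.1}. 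In short: right skeleton, but the two mechanisms that actually make the estimate close --- the derivative transfer to the semigroup at low frequency and the time-decaying source bound at high frequency --- are missing from your proposal.
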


Actually, Proposition \ref{prop4.1} depends on an energy inequality related to sup-norms $\mathcal{E}_{0}(t)$ and $\mathcal{E}_{1}(t)$, which
is included in the following proposition.
\begin{prop}\label{prop4.2}
Let $U=(v,u,z,y)^{\top}$ be the global classical solution in the sense of Theorem \ref{thm1.1}. Additional, if $U_{0}\in \dot{B}^{-1/2}_{2,\infty}$, then
\begin{eqnarray}
\mathcal{E}_{1}(t)\lesssim \mathcal{M}_{0}+\mathcal{E}_{1}^{2}(t)+\mathcal{E}_{0}(t)\mathcal{E}_{1}(t), \label{R-E70}
\end{eqnarray}
where $\mathcal{M}_{0}$ is the same notation defined in Proposition \ref{prop4.1}.
\end{prop}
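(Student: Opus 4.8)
The plan is to substitute the frequency-localized Duhamel formula of Lemma \ref{lem4.1} into each weighted norm making up $\mathcal{E}_1(t)$, and to control separately the free evolution $\mathcal{G}(t)U_0$ and the nonlinear integral $\int_0^t\mathcal{G}(t-\tau)\mathcal{R}(\tau)\,d\tau$, where $\mathcal{R}=(0,0,0,g(z)_x)^{\top}$ with $g(z)=O(z^2)$. A preliminary reduction simplifies the bookkeeping: for $0\le\ell<1/2$ one has $1/2-\ell>0$, so Lemma \ref{lem2.2}(1) and the homogeneity of $\Lambda^\ell$ give $\|\Lambda^\ell U\|_{B^{1/2-\ell}_{2,1}}\approx\|\Lambda^\ell U\|_{L^2}+\|U\|_{\dot B^{1/2}_{2,1}}$. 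Thus the genuine $\ell$-dependence of the target rate is carried by the low-regularity piece $\|\Lambda^\ell U\|_{L^2}$, whereas the tail $\|U\|_{\dot B^{1/2}_{2,1}}$ is precisely the $\ell=1/2$ endpoint quantity. So it suffices to establish the weighted $\Lambda^\ell$-$L^2$ decay and the weighted $\dot B^{1/2}_{2,1}$ decay.

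For the free part I would feed $U_0\in\dot B^{1/2}_{2,1}\cap\dot B^{-1/2}_{2,\infty}$ into the linear decay estimates: \eqref{R-E59} with $\sigma=s=1/2$ gives $\|\Lambda^\ell\mathcal{G}(t)U_0\|_{B^{1/2-\ell}_{2,1}}\lesssim\mathcal{M}_0(1+t)^{-1/4-\ell/2}$, and \eqref{R-E61} with $\sigma=s=1/2$ gives the endpoint $\|\mathcal{G}(t)U_0\|_{\dot B^{1/2}_{2,1}}\lesssim\mathcal{M}_0(1+t)^{-1/2}$. Since $\|U_0\|_{\dot B^{1/2}_{2,1}}\lesssim\|U_0\|_{B^{3/2}_{2,1}}$, multiplying by the corresponding time weights shows the free evolution contributes exactly the $\mathcal{M}_0$ term in \eqref{R-E70}.

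For the nonlinear integral I would split into low and high frequencies. At low frequency $\eta_1(\xi)\approx\xi^2$ makes $\mathcal{G}$ heat-like, and I would exploit the extra derivative by writing $\mathcal{R}=\partial_x(0,0,0,g(z))^{\top}$ and transferring $\partial_x$ onto the Green matrix; this one-frequency gain upgrades the $\dot B^{-1/2}_{2,\infty}\!\to\!\Lambda^\ell L^2$ decay of $\mathcal{G}(t-\tau)\partial_x$ to $(1+t-\tau)^{-3/4-\ell/2}$. Using $g(z)=O(z^2)$ and $L^1\hookrightarrow\dot B^{-1/2}_{2,\infty}$ (Lemma \ref{lem2.3}), $\|g(z)\|_{\dot B^{-1/2}_{2,\infty}}\lesssim\|z\|_{L^2}^2\lesssim\mathcal{E}_1^2(1+\tau)^{-1/2}$. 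Splitting $\int_0^t=\int_0^{t/2}+\int_{t/2}^t$, the first piece contributes $(1+t)^{-3/4-\ell/2}\!\int_0^{t/2}(1+\tau)^{-1/2}d\tau\approx(1+t)^{-1/4-\ell/2}$, and the second is controlled by the integrability of $(1+t-\tau)^{-3/4-\ell/2}$ when $\ell<1/2$; this gives the $\mathcal{E}_1^2$ term. At high frequency $\eta_1(\xi)\approx1$, so $\mathcal{G}(t-\tau)$ decays like $e^{-c(t-\tau)}$ and preserves regularity; here I estimate the source through the Moser and composition bounds (Propositions \ref{prop2.2} and \ref{prop2.4}) as $\|g(z)_x\|_{\dot B^{1/2}_{2,1}}\lesssim\|z\|_{L^\infty}\|z\|_{\dot B^{3/2}_{2,1}}\lesssim\mathcal{E}_1\mathcal{E}_0(1+\tau)^{-1/2}$, using $\dot B^{1/2}_{2,1}\hookrightarrow L^\infty$ in dimension one, and the convolution with $e^{-c(t-\tau)}$ yields the $\mathcal{E}_0\mathcal{E}_1$ term with rapid decay. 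Taking the supremum over $\tau$ and over $0\le\ell<1/2$ closes the estimate away from the endpoint.

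The hard part will be the endpoint $\ell=1/2$, which is exactly borderline. There the transferred kernel $\mathcal{G}(t-\tau)\partial_x$ decays only like $(1+t-\tau)^{-1}$ from $\dot B^{-1/2}_{2,\infty}$ to $\dot B^{1/2}_{2,1}$, and convolving against the optimal source decay $(1+\tau)^{-1/2}$ the crude splitting produces a spurious factor $\ln(1+t)$ on $[t/2,t]$. Removing this logarithmic loss is exactly where the new low-frequency observations \eqref{R-E78} and \eqref{R-E81} are needed: one refines the treatment of the quadratic source $g(z)$ at low frequency, balancing the two factors of $z$ so as to replace the marginal $L^1$ bound by a strictly integrable one, thereby recovering the clean rate $(1+t)^{-1/2}$. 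Combining the endpoint bound with the $\ell<1/2$ estimates and the free-evolution bound, and taking the supremum defining $\mathcal{E}_1(t)$, yields $\mathcal{E}_1(t)\lesssim\mathcal{M}_0+\mathcal{E}_1^2(t)+\mathcal{E}_0(t)\mathcal{E}_1(t)$, which is \eqref{R-E70}.
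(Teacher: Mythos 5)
Your proposal is correct and follows essentially the same route as the paper: the frequency-localized Duhamel principle of Lemma \ref{lem4.1}, exponential decay of $\mathcal{G}$ at high frequency with the source bounded via the Moser/composition estimates by $\|\mathcal{R}(\tau)\|_{\dot{B}^{1/2}_{2,1}}\lesssim \|z\|_{\dot{B}^{1/2}_{2,1}}\|z\|_{\dot{B}^{3/2}_{2,1}}\lesssim \mathcal{E}_{0}(t)\mathcal{E}_{1}(t)(1+\tau)^{-1/4-\ell/2}$ (this is exactly \eqref{R-E74}), and at low frequency the split $\int_0^{t/2}+\int_{t/2}^{t}$ with the derivative transferred onto the Green matrix on $[0,t/2]$ together with $\|g(z)\|_{\dot{B}^{-1/2}_{2,\infty}}\lesssim\|g(z)\|_{L^1}\lesssim\|z\|_{L^2}^2\lesssim\mathcal{E}_1^2(t)(1+\tau)^{-1/2}$, which is precisely \eqref{R-E78}. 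Two points deserve correction, neither fatal. First, on $[t/2,t]$ with $0\le\ell<1/2$ your justification by ``integrability of $(1+t-\tau)^{-3/4-\ell/2}$'' is misstated: the exponent $3/4+\ell/2<1$, so this kernel is \emph{not} integrable; your variant nevertheless closes, because $(1+\tau)^{-1/2}\approx(1+t)^{-1/2}$ on that interval and $\int_{t/2}^{t}(1+t-\tau)^{-3/4-\ell/2}\,d\tau\lesssim(1+t)^{1/4-\ell/2}$, giving the target rate $(1+t)^{-1/4-\ell/2}$. Second, at the endpoint your diagnosis of the logarithmic loss and the proposed cure match the paper's actual mechanism \eqref{R-E79}--\eqref{R-E81}, but the fix is not to make the source ``strictly integrable'': the paper keeps the derivative on the nonlinearity and uses Bernstein plus the product law to get $\|g(z)_x\|_{\dot{B}^{-1/2}_{2,\infty}}\lesssim\|g(z)\|_{\dot{B}^{1/2}_{2,\infty}}\lesssim\|z\|^2_{\dot{B}^{1/2}_{2,1}}\lesssim\mathcal{E}_1^2(t)(1+\tau)^{-1}$ — the gain coming from the endpoint component of $\mathcal{E}_1$ — and $(1+\tau)^{-1}$ is still non-integrable; the clean rate follows simply by pulling out $(1+\tau)^{-1}\approx(1+t)^{-1}$ against the untransferred kernel $(1+t-\tau)^{-1/4-\ell/2}$ on $[t/2,t]$. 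In fact the paper applies this uniform bound for all $0\le\ell\le1/2$, so both subcases of the second integral are handled in one stroke; with that substitution your outline reproduces the paper's proof of \eqref{R-E70}.
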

\begin{proof}
The proof consists of two steps.\\
\underline{\textit{Step 1: High-frequency estimate}}

Due to $\Delta_{q}f\equiv\dot{\Delta}_{q}f(q\geq0$), it suffices to show the inhomogeneous case. It follows from the
high-frequency analysis for (\ref{R-E5})-(\ref{R-E6}) (see, \textit{e.g.}, \cite{XK2}) that
\begin{eqnarray}
\|\Delta_{q}\Lambda^{\ell}\mathcal{G}(t)U_{0}\|_{L^2}\lesssim e^{-c_{2}t}\|\Delta_{q}\Lambda^{\ell}U_{0}\|_{L^2}\ \  (c_{2}>0) \label{R-E71}
\end{eqnarray}
for all $q\geq0$. Then by Lemma \ref{lem4.1}, we arrive at
\begin{eqnarray}
&&\|\Delta_{q}\Lambda^{\ell}U\|_{L^2}
\nonumber\\&\leq&\|\Delta_{q}\Lambda^{\ell}[\mathcal{G}(t)U_{0}]\|_{L^2}+\int^{t}_{0}\|\Delta_{q}\Lambda^{\ell}[\mathcal{G}(t-\tau)\mathcal{R}(\tau)]\|_{L^2}d\tau
\nonumber\\&\lesssim& e^{-c_{2}t}\|\Delta_{q}\Lambda^{\ell}U_{0}\|_{L^2}+\int^{t}_{0}e^{-c_{2}(t-\tau)}\|\Delta_{q}\Lambda^{\ell}\mathcal{R}(\tau)\|_{L^2}d\tau
 \label{R-E72}
\end{eqnarray}
which leads to
\begin{eqnarray}
\sum_{q\geq0}2^{q(1/2-\ell)}\|\Delta_{q}\Lambda^{\ell}U\|_{L^2}\lesssim \|U_{0}\|_{B^{1/2}_{2,1}}e^{-c_{1}t}+\int^{t}_{0}e^{-c_{2}(t-\tau)}\|\mathcal{R}(\tau)\|_{\dot{B}^{1/2}_{2,1}}d\tau \label{R-E73}
\end{eqnarray}
for $0\leq\ell\leq 1/2$.
Next, we turn to estimate the norm $\|\mathcal{R}(\tau)\|_{\dot{B}^{1/2}_{2,1}}$ as follows
\begin{eqnarray}
\|\mathcal{R}(\tau)\|_{\dot{B}^{1/2}_{2,1}}&=&\|g'(z)z_{x}\|_{\dot{B}^{1/2}_{2,1}}\leq\|z\|_{\dot{B}^{1/2}_{2,1}}\|z\|_{\dot{B}^{3/2}_{2,1}}
\leq\|\Lambda^{\ell}z\|_{\dot{B}^{1/2-\ell}_{2,1}}\|z\|_{B^{3/2}_{2,1}}\nonumber\\ &\lesssim& (1+\tau)^{-\frac{1}{4}-\frac{\ell}{2}}\mathcal{E}_{0}(t)\mathcal{E}_{1}(t), \label{R-E74}
\end{eqnarray}
where Lemma \ref{lem2.1} and Proposition \ref{prop2.2} have been used.

Therefore, together with (\ref{R-E73})-(\ref{R-E74}), we obtain
\begin{eqnarray}
\sum_{q\geq0}2^{q(1/2-\ell)}\|\Delta_{q}\Lambda^{\ell}U\|_{L^2}\lesssim \|U_{0}\|_{B^{1/2}_{2,1}}e^{-c_{1}t}+(1+t)^{-\frac{1}{4}-\frac{\ell}{2}}\mathcal{E}_{0}(t)\mathcal{E}_{1}(t) \label{R-E75}
\end{eqnarray}
for $0\leq\ell\leq 1/2$.\\

\underline{\textit{Step 2: Low-frequency estimate}}

In the following, we proceed with the different low-frequency estimate in comparison with \cite{XK2}, where those analysis remain
only for higher dimensions due to interpolation techniques. Here, the proof involves new observations,
which is divided into two cases.

(i) In the case of $0\leq\ell<1/2$, we have the low-frequency estimate for (\ref{R-E4})-(\ref{R-E5}):
\begin{eqnarray}
\|\Delta_{-1}\Lambda^{\ell}[\mathcal{G}(t)U_{0}]\|_{L^2}\lesssim\|\tilde{w}_{0}\|_{\dot{B}^{-1/2}_{2,\infty}}(1+t)^{-\frac{1}{4}-\frac{\ell}{2}}. \label{R-E76}
\end{eqnarray}
Then it follows from Lemma \ref{lem4.1} that
\begin{eqnarray}
&&\|\Delta_{-1}\Lambda^{\ell}U(t,x)\|_{L^2}\nonumber\\&\leq&\|\Delta_{-1}\Lambda^{\ell}[\mathcal{G}(t)U_{0}]\|_{L^2}
+\int^{t}_{0}\|\Delta_{-1}\Lambda^{\ell}[\mathcal{G}(t-\tau)\mathcal{R}(\tau)]\|_{L^2}d\tau
\nonumber\\&\lesssim&\|U_{0}\|_{\dot{B}^{-1/2}_{2,\infty}}(1+t)^{-\frac{1}{4}-\frac{\ell}{2}}
+I_{1}+I_{2}, \label{R-E77}
\end{eqnarray}
where
\begin{eqnarray*}
I_{1}=\int_{0}^{t/2}\|\Delta_{-1}\Lambda^{\ell}[\mathcal{G}(t-\tau)\mathcal{R}(\tau)]\|_{L^2} d\tau,
\end{eqnarray*}
and
\begin{eqnarray*}
I_{2}=\int_{t/2}^{t}\|\Delta_{-1}\Lambda^{\ell}[\mathcal{G}(t-\tau)\mathcal{R}(\tau)]\|_{L^2} d\tau.
\end{eqnarray*}
For $I_{1}$, noticing that the form $\mathcal{R}(\tau)\approx g(z)_{x}$, we arrive at
\begin{eqnarray}
I_{1}&\lesssim& \int_{0}^{t/2}(1+t-\tau)^{-\frac{1}{4}-\frac{\ell+1}{2}}\|g(z)\|_{\dot{B}^{-1/2}_{2,\infty}}d\tau
 \nonumber\\&\lesssim&
  \int_{0}^{t/2}(1+t-\tau)^{-\frac{1}{4}-\frac{\ell+1}{2}} \|g(z)\|_{L^1}d\tau
 \nonumber\\&\lesssim& \int_{0}^{t/2}(1+t-\tau)^{-\frac{1}{4}-\frac{\ell+1}{2}} \|z\|^2_{L^2}d\tau
  \nonumber\\&\lesssim& \mathcal{E}^2_{1}(t)  \int_{0}^{t/2}(1+t-\tau)^{-\frac{1}{4}-\frac{\ell+1}{2}} (1+\tau)^{-\frac{1}{2}}d\tau
   \nonumber\\&\lesssim&  \mathcal{E}^2_{1}(t)(1+t)^{-\frac{1}{4}-\frac{\ell+1}{2}}\int_{0}^{t/2}(1+\tau)^{-\frac{1}{2}}d\tau
   \nonumber\\&\lesssim& \mathcal{E}^2_{1}(t)(1+t)^{-\frac{1}{4}-\frac{\ell}{2}}, \label{R-E78}
\end{eqnarray}
where we have used the fact that $g(z)=O(z^2)$ and the embeddings $L^{1}\hookrightarrow \dot{B}^{-1/2}_{2,\infty}$ in Lemma \ref{lem2.3} and $B^{1/2}_{2,1}\hookrightarrow L^{2}$.

On the other hand, for $I_{2}$, we have
\begin{eqnarray}
I_{2}&\lesssim& \int_{t/2}^{t}(1+t-\tau)^{-\frac{1}{4}-\frac{\ell}{2}}\|g(z)_{x}\|_{\dot{B}^{-1/2}_{2,\infty}}d\tau. \label{R-E79}
\end{eqnarray}
It follows from Lemma \ref{lem2.1} and Proposition \ref{prop2.4} that
\begin{eqnarray}
\|g(z)_{x}\|_{\dot{B}^{-1/2}_{2,\infty}}\lesssim \|g(z)\|_{\dot{B}^{1/2}_{2,\infty}}\lesssim \|z\|^2_{\dot{B}^{1/2}_{2,\infty}}\lesssim \|z\|^2_{\dot{B}^{1/2}_{2,1}}\lesssim (1+\tau)^{-1}\mathcal{E}^2_{1}(t). \label{R-E80}
\end{eqnarray}
Hence, together with (\ref{R-E79})-(\ref{R-E80}),  we are led to the estimate
\begin{eqnarray}
I_{2}&\lesssim&\mathcal{E}^2_{1}(t)\int_{t/2}^{t}(1+t-\tau)^{-\frac{1}{4}-\frac{\ell}{2}} (1+\tau)^{-1} d\tau \nonumber\\&\lesssim&  \mathcal{E}^2_{1}(t)(1+t)^{-1}\int_{t/2}^{t}(1+t-\tau)^{-\frac{1}{4}-\frac{\ell}{2}}d\tau
\nonumber\\&\lesssim&  \mathcal{E}^2_{1}(t)(1+t)^{-\frac{1}{4}-\frac{\ell}{2}}. \label{R-E81}
\end{eqnarray}
Finally, combing (\ref{R-E77})-(\ref{R-E78}) and (\ref{R-E81}), we conclude that
\begin{eqnarray}
\|\Delta_{-1}\Lambda^{\ell}U(t,x)\|_{L^2}\lesssim \|U_{0}\|_{\dot{B}^{-1/2}_{2,\infty}}(1+t)^{-\frac{1}{4}-\frac{\ell}{2}}+\mathcal{E}^2_{1}(t)(1+t)^{-\frac{1}{4}-\frac{\ell}{2}}. \label{R-E82}
\end{eqnarray}

(ii) In the case of $\ell=1/2$, similar to the procedure leading to (\ref{R-E82}), we can deduce the corresponding nonlinear low-frequency estimate
\begin{eqnarray}
&&\sum_{q<0}\|\dot{\Delta}_{q}\Lambda^{1/2}U\|_{L^2}
\nonumber\\&\leq&\sum_{q<0}
\|\dot{\Delta}_{q}\Lambda^{1/2}[\mathcal{G}(t)U_{0}]\|_{L^2}
+\int^{t}_{0}\sum_{q<0}\|\dot{\Delta}_{q}\Lambda^{1/2}[\mathcal{G}(t-\tau)\mathcal{R}(\tau)]\|_{L^2}d\tau.
\nonumber\\&\lesssim &  \|U_{0}\|_{\dot{B}^{-1/2}_{2,\infty}} (1+t)^{-\frac{1}{2}}+\mathcal{E}^2_{1}(t)(1+t)^{-\frac{1}{2}}. \label{R-E83}
\end{eqnarray}

With these preparations (\ref{R-E75}), (\ref{R-E82})-(\ref{R-E83}) in hand, the desired inequality (\ref{R-E70}) is followed directly by the definitions of $\mathcal{E}_{0}(t)$ and $\mathcal{E}_{1}(t)$.
\end{proof}

\noindent \textbf{\textit{The proof of Proposition \ref{prop4.1}}.}
From Theorem \ref{thm1.1}, we see that
$\mathcal{E}_{0}(t)\lesssim \|U_{0}\|_{B^{3/2}_{2,1}}\lesssim \mathcal{M}_{0}$. Thus, if $\mathcal{M}_{0}$ is sufficient small, it follows from (\ref{R-E70}) that
\begin{eqnarray}
\mathcal{E}_{1}(t)\lesssim \mathcal{M}_{0}+\mathcal{E}^{2}_{1}(t), \label{R-E84}
\end{eqnarray}
which implies that $\mathcal{E}(t)\lesssim \mathcal{M}_{0}$ by the standard method, provided that $\mathcal{M}_{0}$ is sufficient small. Consequently, we obtain the decay estimate in Proposition \ref{prop4.1}. $\square$

\section{Appendix}\setcounter{equation}{0}\label{sec:5}
For convenience of reader, in this section, we review the Littlewood--Paley
decomposition and definitions for Besov spaces and Chemin-Lerner spaces in $\mathbb{R}^{n}(n\geq1)$, see
\cite{BCD} for more details.

Let ($\varphi, \chi)$ is a
couple of smooth functions valued in [0,1] such that $\varphi$ is
supported in the shell
$\textbf{C}(0,\frac{3}{4},\frac{8}{3})=\{\xi\in\mathbb{R}^{n}|\frac{3}{4}\leq|\xi|\leq\frac{8}{3}\}$,
$\chi$ is supported in the ball $\textbf{B}(0,\frac{4}{3})=
\{\xi\in\mathbb{R}^{n}||\xi|\leq\frac{4}{3}\}$ satisfying
$$
\chi(\xi)+\sum_{q\in\mathbb{N}}\varphi(2^{-q}\xi)=1,\ \ \ \ q\in
\mathbb{N},\ \ \xi\in\mathbb{R}^{n}
$$
and
$$
\sum_{k\in\mathbb{Z}}\varphi(2^{-k}\xi)=1,\ \ \ \ k\in \mathbb{Z},\
\ \xi\in\mathbb{R}^{n}\setminus\{0\}.
$$
For $f\in\mathcal{S'}$(the set of temperate distributions
which is the dual of the Schwarz class $\mathcal{S}$),  define
$$
\Delta_{-1}f:=\chi(D)f=\mathcal{F}^{-1}(\chi(\xi)\mathcal{F}f),\
\Delta_{q}f:=0 \ \  \mbox{for}\ \  q\leq-2;
$$
$$
\Delta_{q}f:=\varphi(2^{-q}D)f=\mathcal{F}^{-1}(\varphi(2^{-q}|\xi|)\mathcal{F}f)\
\  \mbox{for}\ \  q\geq0;
$$
$$
\dot{\Delta}_{q}f:=\varphi(2^{-q}D)f=\mathcal{F}^{-1}(\varphi(2^{-q}|\xi|)\mathcal{F}f)\
\  \mbox{for}\ \  q\in\mathbb{Z},
$$
where $\mathcal{F}f$, $\mathcal{F}^{-1}f$ represent the Fourier
transform and the inverse Fourier transform on $f$, respectively. Observe that
the operator $\dot{\Delta}_{q}$ coincides with $\Delta_{q}$ for $q\geq0$.

Denote by $\mathcal{S}'_{0}:=\mathcal{S}'/\mathcal{P}$ the tempered
distributions modulo polynomials $\mathcal{P}$. We first give the definition of homogeneous Besov spaces.
\begin{defn}\label{defn4.1}
For $s\in \mathbb{R}$ and $1\leq p,r\leq\infty,$ the homogeneous
Besov spaces $\dot{B}^{s}_{p,r}$ is defined by
$$\dot{B}^{s}_{p,r}=\{f\in S'_{0}:\|f\|_{\dot{B}^{s}_{p,r}}<\infty\},$$
where
$$\|f\|_{\dot{B}^{s}_{p,r}}
=\begin{cases}\Big(\sum_{q\in\mathbb{Z}}(2^{qs}\|\dot{\Delta}_{q}f\|_{L^p})^{r}\Big)^{1/r},\
\ r<\infty, \\ \sup_{q\in\mathbb{Z}}
2^{qs}\|\dot{\Delta}_{q}f\|_{L^p},\ \ r=\infty.\end{cases}
$$\end{defn}

Similarly, the definition of inhomogeneous Besov spaces is stated as follows.
\begin{defn}\label{defn4.2}
For $s\in \mathbb{R}$ and $1\leq p,r\leq\infty,$ the inhomogeneous
Besov spaces $B^{s}_{p,r}$ is defined by
$$B^{s}_{p,r}=\{f\in S':\|f\|_{B^{s}_{p,r}}<\infty\},$$
where
$$\|f\|_{B^{s}_{p,r}}
=\begin{cases}\Big(\sum_{q=-1}^{\infty}(2^{qs}\|\Delta_{q}f\|_{L^p})^{r}\Big)^{1/r},\
\ r<\infty, \\ \sup_{q\geq-1} 2^{qs}\|\Delta_{q}f\|_{L^p},\ \
r=\infty.\end{cases}$$
\end{defn}

On the other hand, we also present the definition of Chemin-Lerner
spaces first initialled by J.-Y. Chemin and N. Lerner
\cite{CL}, which are the refinement of the space-time mixed spaces
$L^{\theta}_{T}(\dot{B}^{s}_{p,r})$ or
$L^{\theta}_{T}(B^{s}_{p,r})$.

\begin{defn}\label{defn4.3}
For $T>0, s\in\mathbb{R}, 1\leq r,\theta\leq\infty$, the homogeneous
mixed Chemin-Lerner spaces
$\widetilde{L}^{\theta}_{T}(\dot{B}^{s}_{p,r})$ is defined by
$$\widetilde{L}^{\theta}_{T}(\dot{B}^{s}_{p,r}):
=\{f\in
L^{\theta}(0,T;\mathcal{S}'_{0}):\|f\|_{\widetilde{L}^{\theta}_{T}(\dot{B}^{s}_{p,r})}<+\infty\},$$
where
$$\|f\|_{\widetilde{L}^{\theta}_{T}(\dot{B}^{s}_{p,r})}:=\Big(\sum_{q\in\mathbb{Z}}(2^{qs}\|\dot{\Delta}_{q}f\|_{L^{\theta}_{T}(L^{p})})^{r}\Big)^{\frac{1}{r}}$$
with the usual convention if $r=\infty$.
\end{defn}

\begin{defn}\label{defn4.4}
For $T>0, s\in\mathbb{R}, 1\leq r,\theta\leq\infty$, the
inhomogeneous Chemin-Lerner spaces
$\widetilde{L}^{\theta}_{T}(B^{s}_{p,r})$ is defined by
$$\widetilde{L}^{\theta}_{T}(B^{s}_{p,r}):
=\{f\in
L^{\theta}(0,T;\mathcal{S}'):\|f\|_{\widetilde{L}^{\theta}_{T}(B^{s}_{p,r})}<+\infty\},$$
where
$$\|f\|_{\widetilde{L}^{\theta}_{T}(B^{s}_{p,r})}:=\Big(\sum_{q\geq-1}(2^{qs}\|\Delta_{q}f\|_{L^{\theta}_{T}(L^{p})})^{r}\Big)^{\frac{1}{r}}$$
with the usual convention if $r=\infty$.
\end{defn}

We further define
$$\widetilde{\mathcal{C}}_{T}(B^{s}_{p,r}):=\widetilde{L}^{\infty}_{T}(B^{s}_{p,r})\cap\mathcal{C}([0,T],B^{s}_{p,r})
$$ and $$\widetilde{\mathcal{C}}^1_{T}(B^{s}_{p,r}):=\{f\in\mathcal{C}^1([0,T],B^{s}_{p,r})|\partial_{t}f\in\widetilde{L}^{\infty}_{T}(B^{s}_{p,r})\},$$
where the index $T$ will be omitted when $T=+\infty$.

By Minkowski's inequality, $\widetilde{L}^{\theta}_{T}(B^{s}_{p,r})$ may
be linked with the usual space-time mixed spaces $L^{\theta}_{T}(B^{s}_{p,r})$.
\begin{rem}\label{rem5.1}
It holds that
$$\|f\|_{\widetilde{L}^{\theta}_{T}(B^{s}_{p,r})}\leq\|f\|_{L^{\theta}_{T}(B^{s}_{p,r})}\,\,\,
\mbox{if}\,\, r\geq\theta;\ \ \ \
\|f\|_{\widetilde{L}^{\theta}_{T}(B^{s}_{p,r})}\geq\|f\|_{L^{\theta}_{T}(B^{s}_{p,r})}\,\,\,
\mbox{if}\,\, r\leq\theta.
$$\end{rem}

\section*{Acknowledgments}
J. Xu is partially supported by the Program for New Century Excellent
Talents in University (NCET-13-0857), Special Foundation of China Postdoctoral
Science Foundation (2012T50466) and the NUAA Fundamental
Research Funds (NS2013076). The work is also partially supported by
Grant-in-Aid for Scientific Researches (S) 25220702 and (A) 22244009.


\begin{thebibliography}{99}
\bibitem {ABMR}
F. Ammar Khodja, A. Benabdallah, J. E. Mu\~{n}oz Rivera and R. Racke, Energy
decay for Timoshenko systems of memory type, \textit{J. Differential Equations}, {\bf{194}}
82--115 (2003).

\bibitem{AGZ}
H. Abidi, G.~L.~Gui and P. Zhang, On the wellposedness of three-dimensional
inhomogeneous Navier-Stokes equations in the critical spaces, \textit{Arch. Ration. Mech. Anal.},
{\bf{204}} 189--230 (2012).

\bibitem {ARMSV}
M. S. Alves, C. A Raposo, J. E. Mu\~{n}oz Rivera, M. Sepulveda and O. Vera Villagran,
Uniform stabilization for the transmission problem of the Timoshenko system with
memory, \textit{J. Math. Anal. Appl.}, {\bf{369}} 323--345 (2010).

\bibitem {BCD}
H.~Bahouri, J.~Y.~Chemin and R.~Danchin. \textit{Fourier Analysis
and Nonlinear Partial Differential Equations}, Grundlehren der
Mathematischen Wissenschaften, Berlin: Springer-Verlag, 2011.

\bibitem {CL} J.-Y. Chemin and N. Lerner, Flot de champs de vecteurs non lipschitziens et \'{e}quations de Navier-Stokes,
\textit{J. Differential Equations}, {\bf{121}} 314--328 (1995).

\bibitem {D1}
R. Danchin, Global existence in critical spaces for compressible Navier-Stokes equations, \textit{Invent. Math.},
{\bf{141}}, 579--614 (2000).

\bibitem {D2}
R. Danchin,  On the well-posedness of the incompressible
density-dependent Euler equations in the $L^p$ framework, \textit{J. Differential Equations}, {\bf{248}} 2130--2170 (2010).

\bibitem {FR}
H. D. Fern\'{a}ndez Sare and R. Racke, On the stability of damped Timoshenko systems:
Cattaneo vs. Fourier law, \textit{Arch. Rational Mech. Anal.}, {\bf{194}} 221-251 (2009).

\bibitem{H}
B. Haspot, Existence of global strong solutions
in critical spaces for barotropic viscous fluids,
\textit{Arch. Ration. Mech. Anal.},
{\bf{202}} (2011) 427--460.

\bibitem{IHK}
K. Ide, K. Haramoto and S. Kawashima, Decay property of regularity-loss type for
dissipative Timoshenko system, \textit{Math. Models Meth. Appl. Sci.}, {\bf{18}}, 647-667 (2008).

\bibitem{IK}
K. Ide and S. Kawashima, Decay property of regularity-loss type and nonlinear effects for dissipative
Timoshenko system, \textit{Math. Models Meth. Appl. Sci.}, {\bf{18}}, 1001-1025 (2008).

\bibitem{K} T.~Kato, The Cauchy problem for quasi-linear
symmetric hyperbolic systems, \textit{Arch. Ration. Mech. Anal.},
{\bf{58}} (1975) 181--205.

\bibitem{LK}
Y. Liu and S. Kawashima, Decay property for the Timoshenko system with memorytype
dissipation, \textit{Math. Models Methods. Appl. Sci.} {\bf{22}} (2012) 1150012(1--19).

\bibitem{LP}
Z. Liu and C. Peng, Exponential stability of a viscoelastic Timoshenko beam,
\textit{Adv. Math. Sci. Appl.} {\bf{8}} 343--351 (1998).

\bibitem{M} A.~Majda, \textit{Compressible Fluid Flow and
Conservation laws in Several Space Variables}, Berlin/New York,
Springer-Verlag: 1984.

\bibitem{Ma}
A. Matsumura, \textit{An energy method for the equations of motion of
compressible viscous and heat-conductive fluids}, MRC Technical
Summary Report, Univ. of Wisconsin-Masison, $\sharp$ \textbf{2194}
(1981)

\bibitem{MK}
N. Mori and S. Kawashima, Decay property for the Timoshenko system with Fourier's type heat conduction,
\textit{J. Hyper. Differ. Equs.} {\bf{11}}, 135--157 (2014).

\bibitem {MN} A. Matsumura and T. Nishida, The initial value problem
for the quations  of motion of viscous and heat-conductive gases, \textit{J.
Math. Kyoto Univ.} {\bf{20}}, 67--104 (1980).

\bibitem {PZ}
M. Paicu and P. Zhang, Global solutions to the 3-D incompressible inhomogeneous Navier-Stokes system, \textit{J. Funct. Anal.},
{\bf{262}} 3556--3584, (2012).

\bibitem {RBS}
C. A. Raposo, W. D. Bastos and M. L. Santos, A transmission problem for Timoshenko
system, \textit{Comput. Appl. Math.} {\bf{26}}, 215--234 (2007).

\bibitem {RFSC}
 C. A. Raposo, J. Ferreira, M. L. Santos and N. N. O. Castro, Exponential stability
for the Timoshenko system with two weak damping, \textit{Appl. Math. Lett.} {\bf{18}}, 535--541 (2005).

\bibitem {RR1}
J. E. Mu\~{n}oz Rivera and R. Racke, Mildy dissipative nonlinear Timoshenko systems:
Global existence and exponential stability, \textit{J. Math. Anal. Appl.} {\bf{276}},
248--278 (2002).

\bibitem {RR2}
J. E. Mu\~{n}oz Rivera and R. Racke, Global stability for damped Timoshenko systems,
\textit{Discrete Contin. Dyn. Syst.} {\bf{9}}, 1625--1639 (2003).

\bibitem {RS}
R. Racke and B. Said-Houari, Decay rates and global existence for semilinear dissipatie Timoshenko systems,
\textit{Quart. Appl. Math.} {\bf{71}}, 229--266 (2013).

\bibitem {SAJM}
M. L. Santos, D. S. Almeida J\'{u}nior and J. E. Mu\~{n}oz Rivera, The stability number
of the Timoshenko system with second sound, \textit{J. Differential Equations} {\bf{253}}, 2715--2733 (2012).

\bibitem {SK}
B. Said-Houari and A. Kasimov, Decay property of Timoshenko system in thermoelasticity,
\textit{Math. Methods Appl. Sci.} {\bf{35}}, 314--333 (2012).

\bibitem {T1}
S. P. Timoshenko, On the correction for shear of the differential equation for transverse
vibrations of prismatic bars, \textit{Philos. Mag.} {\bf{41}}, 744--746 (1921).

\bibitem {T2}
S. P. Timoshenko, On the transverse vibrations of bars of uniform cross-section,
\textit{Philos. Mag.} {\bf{43}}, 125--131 (1922).


\bibitem{W}
J.~H. Wu, Lower bounds for an integral involving fractional
Laplacians and the generalized Navier-Stokes equations
in Besov spaces, \textit{Commun. Math. Phys.} {\bf{263}}, 803--831 (2005).

\bibitem{XK1}
J. Xu and S. Kawashima, Global classical solutions for partially
dissipative hyperbolic system of balance laws,
\textit{Arch. Rational Mech. Anal.} {\bf{211}}, 513--553 (2014).

\bibitem{XK2}
J. Xu and S. Kawashima, The optimal decay estimates on the framework of
Besov spaces for generally dissipative systems, Preprint (2014).

\bibitem{XW}
J. Xu and Z. J. Wang, Relaxation limit in Besov spaces for
compressible Euler equations, \textit{J. Math. Pures Appl.} {\bf{99}}, 43--61 (2013).

\bibitem{XY}
J.Xu and Wen-A.Yong. Relaxation-time limits of non-isentropic hydrodynamic models for semiconductors, 
\textit{J. Differential Equations} {\bf{247}}, 1777--1795 (2009).
\end{thebibliography}
\end{document}